\numberwithin{equation}{section}
\newtheorem{theorem}{Theorem}
\newtheorem{proposition}{Proposition}[section]
\newtheorem{corollary}[proposition]{Corollary}
\newtheorem{lemma}[proposition]{Lemma}
\newtheorem*{main}{Main result}
\newtheorem*{definition}{Definition}
\theoremstyle{remark}
\newtheorem*{remark}{Remark}
\newcommand{\Z}{\mathbb{Z}}
\newcommand{\R}{\mathbb{R}}
\newcommand{\N}{\mathbb{N}}
\newcommand{\T}{\mathbb{T}}
\newcommand{\bA}{\mathbb{A}}
\newcommand{\bE}{\mathbb{E}}
\DeclareMathOperator*{\argmin}{arg\,min}
\DeclareMathOperator{\supp}{supp}
\renewcommand{\mod}{\,\mathrm{mod}\,}
\newcommand{\Apm}{\mathbf{AP}^-}
\newcommand{\App}{\mathbf{AP}^+}
\newcommand{\chphi}{\widecheck{\varphi}}
\newcommand{\chpsi}{\widecheck{\psi}}
\newcommand{\chK}{\widecheck{K}}
\newcommand{\barN}{{\bar{N}}}
\begin{document}

\title[Convergence rate of random Hamilton-Jacobi equations]{Exponential convergence of solutions for random Hamilton-Jacobi equations}
\author{Renato Iturriaga}
\address{CIMAT, Guanajuato, Mexico}
\email{renato@cimat.mx}
\author{Konstantin Khanin}
\address{University of Toronto}
\email{khanin@math.toronto.edu}
\author{Ke Zhang}
\address{University of Toronto}
\email{kzhang@math.toronto.edu}


\begin{abstract}
	We show that for a family of randomly kicked Hamiton-Jacobi equations on the torus, almost surely, the solution of an initial value problem converges exponentially fast to the unique  stationary solution. Combined with the results in \cite{IK03} and \cite{KZ12}, this completes the program started in \cite{EKMS00} for the multi-dimensional setting.  
\end{abstract}

\maketitle

\section{Introduction}

\label{sec:introduction}

We consider the randomly forced Hamilton-Jacobi equation on the $d$ dimensional torus
\begin{equation}
	\label{eq:HJ}
	\partial_t \psi(x,t) + \frac12\left(  \nabla \psi(x,t) + b \right)^2  + F^\omega(x,t) = 0, \quad x \in \T^d = (\R/\Z)^d,
\end{equation}
where $b\in \R^d$, $\nabla$ stands for gradient in $x$, and $F^\omega$ is a random potential.  
By writing $u(x,t) = \nabla \psi(x,t)$, we obtain the stochastic Burgers equation 
\begin{equation}
	\label{eq:burgers}
	\partial_t u + (u\cdot \nabla)u = f^\omega(y,t), \quad y\in \R^d, t\in \R,
\end{equation}
$f^\omega(y,t) = - \nabla F^\omega(y,t)$ with the condition $\int u(x,t) dx = b$. This is one of the motivations of our study. On the other hand, \eqref{eq:HJ}  is a particular example of the more general Hamilton-Jacobi equation
\begin{equation}
	\label{eq:gen-HJ}
	\partial_t \psi + H^\omega(x, \nabla \psi) = 0
\end{equation}
where $H^\omega(x,p)$ is strictly convex and superlinear in $p$ (called the Tonelli Hamiltonians). Many of our results can be generalized to \eqref{eq:gen-HJ}, but we will restrict to \eqref{eq:HJ} for simplicity. 

We are interested in two types of random potentials. In  \cite{EKMS00}, the authors consider the dimension $d=1$, with the ``white noise potential''
\begin{equation}
	\label{eq:whitenoise}
	F^\omega(y,t) = \sum_{i=1}^M F_i(y,t) = \sum_{i=1}^M F_i(y) \dot{W}_i(t),
\end{equation}
where $F_i:\T^d \to \R$ are smooth functions, and $\dot{W}_i$ are independent white noises. It is shown that finite time solutions of \eqref{eq:HJ} converges exponentially fast to a unique stationary solution. In this paper, we generalize this result to arbitrary dimensions, for a related ``kicked'' model. 

The ``kicked force'' model was introduced in \cite{IK03}, with
\begin{equation}
	\label{eq:kicked} F^\omega(y,t) = \sum_{j\in \Z}F^\omega_j(y)\delta(t-j),
\end{equation}
where $F^\omega_j$ is an i.i.d. sequence of potentials, and $\delta(\cdot)$ is the delta function. We focus on the ``kicked'' potential \eqref{eq:kicked} as it is simpler, but retains most of the features of the system.

The system \eqref{eq:gen-HJ} does not admit classical solutions in general, and the solution is interpreted using the Lax-Oleinik variational principle. There is a semi-group family of operators (see \eqref{eq:lax-bk})
\[
	K_{s,t}^{\omega,b}: C(\T^d) \to C(\T^d),  
\]
such that the function $\psi(x,\tau) = K_{s,\tau}^{\omega,b}\varphi(x)$, $s \le \tau \le t$ is the solution to \eqref{eq:HJ} on the interval $[s,t]$ with the initial condition $\psi(x,s) = \varphi(x)$. 

It is shown in \cite{IK03} that under suitable conditions on the kicked force, almost surely, the system \eqref{eq:HJ} admits a unique solution $\psi^-_\omega(x,t)$ (up to an additive constant) on the interval $(-\infty, \infty)$. Let us denote 
\[
	\| \psi\|_* = \min_{C \in \R} \sup_{x \in \T^d}\|\psi(x)- C\|,
\]
which is the suitable semi-norm for measuring convergence up to an additive constant. Then  any solution on $[s,t]$ converges to $\psi^-_\omega$ as $s \to -\infty$, uniformly over all initial conditions in the semi-norm $\|\cdot \|_*$:
\[
	\lim_{s \to - \infty} \sup_{\varphi \in C(\T^d)} \| K_{s,t}^{\omega,b}\varphi(x) - \psi^-_\omega(x,t)\|_* = 0.
\]

Our main result is that the above convergence is exponentially fast. 
\begin{main}
There exists a (non-random) $\lambda>0$ such that, almost surely,
\[
	\limsup_{s \to -\infty} \frac{1}{|s|} \log \left(   \sup_{\varphi \in C(\T^d)} \| K_{s,t}^{\omega,b}\varphi(x) - \psi^-_\omega(x,t)\|_*  \right)
	< -\lambda, 
\]
see Theorem~\ref{thm:exp-conv}. 
\end{main}

\begin{remark}
Exponential convergence is also known to hold in the viscous equation
\[
	\partial_t \psi + \frac12 (\nabla \psi)^2 + F^\omega = \nu \Delta \psi
\]
(see \cite{Sinai1991}). However, in this case the a priori convergence rate $\lambda(\nu)\to 0$ as $\nu \to 0$. Since our result provides a non-zero lower bound on convergence rate when $\nu =0$, it is an interesting question whether a uniform rate of convergence exists for the viscous equation. 
\end{remark}

The \emph{a priori} convergence rate of the Lax-Oleinik semi-group is only polynomial in time, as evidence in the case when there is no force, i.e. $F^\omega = 0$.  In the case when the force is non-random, exponential convergence is true when the Aubry set consists of finitely many hyperbolic periodic orbits or fixed points (\cite{IS2009}). According to a famous conjecture of Ma\~ne, this condition holds for a generic force (\cite{Mane1997}), however this conjecture is only proven when $d=2$ among $C^2$ forces (\cite{CFR2015}, \cite{Contreras2014a}). 

In some sense,  \cite{KZ12} proves a random version of Ma\~ne's conjecture. In the random case, the role of the Aubry set is taken by the the globally minimizing orbit, and it is shown that this orbit is non-uniformly hyperbolic under the  random Euler-Lagrange flow. Conceptually, this hyperbolicity then allows the exponential convergence. However, this is quite delicate. To illustrate, let us outline the proof in the uniform hyperbolic case:
\begin{itemize}
 \item \emph{(Step 1)} Consider a solution $K_{-T, 0}^{\omega, b}\varphi$ that is sufficiently close to the stationary solution $\psi^-(\cdot, 0)$, this is the case since we know the solution $K_{-T, 0}^{\omega, b} \varphi \to \psi^-(\cdot, 0)$, albeit without any rate estimates. 
 \item \emph{(Step 2)} Show that the associated finite time minimizers is close to the Aubry set when $t \in [-2T/3, -T/3]$. By hyperbolic theory, any orbit that stays in a neighborhood of an hyperbolic orbit for time $T/3$ must be exponentially close to it at some point.
 \item \emph{(Step 3)} Finite time minimizer being exponentially close to the Aubry set implies the solution is in fact exponentially close to $\psi^-$. 
\end{itemize}

In the non-uniform hyperbolic case, Step 2 fails, because a non-uniform hyperbolic orbit only influence nearby orbits in a \emph{random} neighborhood whose size changes from iterate to iterate. We are forced to devise a much more involved procedure: 
\begin{enumerate}
 \item \emph{(Step A)} Reduce the problem to a local one, where we only study the solution in a small (random) neighborhood of the global minimizer. 
 \item \emph{(Step B)} Consider a solution $K_{-T, 0}^{\omega, b}\varphi$ is $\delta$-close to the stationary solution $\psi^-(\cdot, 0)$ locally. Use a combination of variational and non-uniform hyperbolic theory to show that the finite time minimizer is $\delta^q$-close to the global minimizer at some time, where $q > 1$. This step can only be done up to an exponentially small error. 
 \item \emph{(Step C)} Use step B to show the solution $K_{-T, 0}^{\omega, b}\varphi$ is $\delta^q$-close to the stationary solution. Feed the new estimates into  Step B, and repeatedly upgrade until $\delta$ is exponentially small. 
\end{enumerate}

We now present the outline of the paper. We formulate our assumptions and main result in Section~\ref{sec:stat}. Basic properties of the viscosity solutions and stationary solutions are introduced in Sections \ref{sec:visc} and \ref{sec:prop-visc}. In Section~\ref{sec:local-conv}, we reduce the main result to its local version, as outlined in Step A. This is Proposition~\ref{prop:local-conv}. 

In Section~\ref{sec:upgrade}, we describe the upgrade procedure outlined in Step C. Step B is formulated in Proposition~\ref{prop:dt-localize}, and the proof is postponed to Sections \ref{sec:finite-time} and \ref{sec:hyper-theory}.

\section{Statement of the main result}
\label{sec:stat}

Consider the kicked potentials \eqref{eq:kicked}, where the random potentials $F^\omega_j$ are chosen independently from a distribution $P \in \mathcal{P}(C^{2+\alpha}(\T^d))$, with $0 < \alpha \le 1$. 

Given an absolutely continuous curve $\zeta:[s,t]\to \T^d$, we define the action of $\zeta$ to be 
\[
	\bA^{\omega, b}(\zeta) =  \int_s^t \frac12 \left(  \dot{\zeta}^2(\tau) - b \cdot \dot{\zeta} \right) d\tau - \sum_{s \le j < t} F^\omega_j(\zeta(j)).
\]
In other words, when $s,t$ are integers, we include the kick at time $s$, but not at time $t$. For $0 < s < t \in \R$,  and $x, x' \in \T^d$, the action function is 
\begin{equation}
	\label{eq:action}
	A_{s,t}^{\omega, b}(x, x') = \inf_{\zeta(s) = x, \, \zeta(t) = x'} \bA^{\omega, b}(\zeta),
\end{equation}
where $\zeta$ is absolutely continuous. The action function is Lipshitz in both variables. 

The backward Lax-Oleinik operator $K^{\omega,b}_{s,t} : C(\T^d) \to C(\T^d)$ is defined as 
\begin{equation}
	\label{eq:lax-bk}
	K^{\omega,b}_{s,t}\varphi(x) = \min_{y \in \T^d} \left\{  \varphi(y) + A_{s,t}^{\omega,b}(y, x)  \right\}.
\end{equation}
We take \eqref{eq:lax-bk} as the definition of our solution on $[s,t]$ with initial conditon $\varphi(x)$. 
Due to the fact that $F^\omega(x,t)$ vanishes at non-integer times,  $K_{s,t}^{\omega,b}$ is completely determined by its value at integer times. In the sequel we consider only $s=m, t =n \in \Z$. The operators satisfies a semi-group property: for $s <  t < u $, 
\[
	K^{\omega,b}_{t, u} K^{\omega,b}_{s,t}\varphi(x) = K^{\omega,b}_{s,u}\varphi(x)
\]

We now state the conditions on the random potentials. The following assumptions are introduced in \cite{IK03}, which guarantees the uniqueness of the stationary solution.  
\begin{itemize}
	\item[] \emph{Assumption 1.} For any $y\in\T^d$, there exists $G_y\in \supp P$ s.t. $G_y$ has a maximum at $y$ and that there exists $\delta>0$ such that 
	$$ G_y(y)-G(x)\ge \delta|y-x|^2. $$
	\item[] \emph{Assumption 2.} $0\in \supp P$. 
	\item[] \emph{Assumption 3.} There exists $G\in \supp P$ such that $G$ has a unique maximum. 
\end{itemize}
The following is proved in \cite{IK03} under the weaker assumption that $F_j^\omega\in C^1(\T^d)$: 
\begin{proposition}\label{prop:backward-min}\cite{IK03}
\begin{enumerate}
	\item Assume that assumption 1 or 2 holds.  For a.e. $\omega\in \Omega$, we have the following statements.
	\begin{enumerate}
		\item There exists a Lipshitz function $\psi^-(x,n)$, $n \in \Z$, such that for any $m<n$,
		$$ K_{m,n}^{\omega,b}\psi^-(x,m)=\psi^-(x,n).$$
		\item For any $n \in \Z$, we have
		$$ \lim_{m\to -\infty}\sup_{\varphi \in C(\T)}\|K^{\omega,b}_{m,n}\varphi(x)-\psi^-(x,n)\|_* =0. $$
	\end{enumerate}
	\item Assume that assumption 3 holds. Then the conclusions for the first case hold for $b=0$.  
\end{enumerate}
\end{proposition}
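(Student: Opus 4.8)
\emph{Proof plan.}
The plan is to isolate one probabilistic input---a ``forgetting'' estimate over a finite time window---and feed it through the soft structure of the Lax--Oleinik semigroup. First I would record the two elementary facts visible from \eqref{eq:lax-bk}: the operator $K^{\omega,b}_{m,n}$ commutes with the addition of constants, and it is order preserving ($\varphi\le\tilde\varphi$ pointwise $\Rightarrow K^{\omega,b}_{m,n}\varphi\le K^{\omega,b}_{m,n}\tilde\varphi$). Together these give that $K^{\omega,b}_{m,n}$ is non-expansive for $\|\cdot\|_\infty$, hence also for the semi-norm $\|\cdot\|_*$. I would also note that for $m\le n-1$ the function $K^{\omega,b}_{m,n}\varphi$ is Lipschitz with a constant depending only on $d$ and has oscillation $\le d/4$, uniformly in $\varphi$, in $\omega$, and in $b$ (this comes from the quadratic kinetic term over the last unit time interval: a minimum of translates of $\tfrac12|x-y+k|^2$ is $\tfrac{\sqrt d}{2}$-Lipschitz). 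In particular all the objects below lie in a fixed compact subset of $C(\T^d)/\R$.

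Set
\[
	d_m(n):=\sup_{\varphi,\tilde\varphi\in C(\T^d)}\bigl\|\,K^{\omega,b}_{m,n}\varphi-K^{\omega,b}_{m,n}\tilde\varphi\,\bigr\|_*,
\]
the amount of initial data ``remembered'' at time $n$ when it is imposed at time $m$. By the semigroup identity $K^{\omega,b}_{m',n}=K^{\omega,b}_{m,n}\circ K^{\omega,b}_{m',m}$ together with non-expansiveness, $d_m(n)$ is finite and non-increasing as $m$ decreases. The heart of the matter is the following, which I would state as a lemma: for every $\varepsilon>0$ there exist $L=L(\varepsilon)\in\N$ and an event $\mathcal E_\varepsilon$ depending only on the kicks $F^\omega_0,\dots,F^\omega_{L-1}$, of positive probability, such that on the shifted event $\{\theta^m\omega\in\mathcal E_\varepsilon\}$ one has $d_m(m+L)\le\varepsilon$. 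Under Assumption~2 this is essentially a computation: taking $F^\omega_0,\dots,F^\omega_{L-1}$ all of very small $C^{2+\alpha}$-norm (which has positive probability since $0\in\supp P$) makes $K^{\omega,b}_{0,L}$ a uniformly small perturbation of free evolution over time $L$, and for free evolution the action $A^{\mathrm{free},b}_{0,L}(y,x)$ splits as a constant depending only on $(L,b)$ plus a remainder bounded by $O(d/L)$; hence the image of $K^{\omega,b}_{0,L}$ has $\|\cdot\|_*$-oscillation $O(d/L)+O(L\cdot(\text{kick size}))$, which is $\le\varepsilon$ once $L$ is large and the kicks are then small. Under Assumptions~1 and~3 the same conclusion is reached by a concentration argument: forcing one or several consecutive kicks to be $C^1$-close to potentials $G_y$ with a quadratic (resp.\ merely unique, when $b=0$) maximum pins the backward minimizing characteristics issued from time $m+L$ into a common small region, which makes $K^{\omega,b}_{m,m+L}\varphi$ independent of $\varphi$ up to an additive constant and a small error. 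The absence of the drift is exactly what makes a unique maximum suffice, which is why part~(2) is confined to $b=0$.

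Granting the lemma, I would finish by a Borel--Cantelli argument. The kicks are i.i.d., so the events $\{\theta^m\omega\in\mathcal E_\varepsilon\}$ along an arithmetic progression of step $L$ are independent; hence, almost surely (for all $\varepsilon$ in a countable dense set, and then for all $\varepsilon$), they occur for arbitrarily negative $m$, and since $d_m(n)$ is non-increasing in $-m$ we get $d_m(n)\to0$ as $m\to-\infty$, for a.e.\ $\omega$ and every $n$ simultaneously. Now fix any $\varphi_0$; from $\|K^{\omega,b}_{m',n}\varphi_0-K^{\omega,b}_{m,n}\varphi_0\|_*\le d_m(n)$ for $m'<m$ the family $(K^{\omega,b}_{m,n}\varphi_0)_m$ is $\|\cdot\|_*$-Cauchy as $m\to-\infty$, so it converges in the complete space $C(\T^d)/\R$; choosing representatives $\psi^-(\cdot,n)$ so that $K^{\omega,b}_{m,n}\psi^-(\cdot,m)=\psi^-(\cdot,n)$ holds for all $m<n$ (possible since the limit lies in every backward image, and consistent by the semigroup property) yields a Lipschitz $\psi^-$, which is (a)(i). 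For (a)(ii), $\|K^{\omega,b}_{m,n}\varphi-\psi^-(\cdot,n)\|_*\le\|K^{\omega,b}_{m,n}\varphi-K^{\omega,b}_{m,n}\varphi_0\|_*+\|K^{\omega,b}_{m,n}\varphi_0-\psi^-(\cdot,n)\|_*\le d_m(n)+o(1)\to0$ uniformly in $\varphi$. Part~(2) is identical, using the Assumption~3 form of the lemma and $b=0$ throughout.

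The step I expect to be by far the hardest---indeed the only step that is not soft---is the forgetting lemma under Assumptions~1 and~3. Under Assumption~2 it reduces to the short computation above, but extracting the concentration of backward characteristics from the mere existence of suitable maxima in $\supp P$, uniformly over all initial data and while keeping track of the additive-constant ambiguity, is where the real work is.
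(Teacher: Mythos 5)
The paper does not prove Proposition~\ref{prop:backward-min}; it cites it from \cite{IK03}. So there is no in-paper proof to compare against, and your proposal should be judged against the argument in \cite{IK03} (and its continuous-time antecedent in \cite{EKMS00}), whose structure you have reproduced faithfully: commutation with constants and order preservation give $\|\cdot\|_*$-non-expansiveness of $K^{\omega,b}_{m,n}$; the one-step inf-convolution with a fixed quadratic kernel gives a uniform Lipschitz/compactness bound on the range of $K^{\omega,b}_{m,n}$; you introduce the forgetting quantity $d_m(n)$, which is monotone in $-m$ by the semigroup and non-expansiveness; you localize the probabilistic input into a finite-window ``forgetting'' event of positive probability; Borel--Cantelli along an arithmetic progression of step $L$ (where independence of the i.i.d.\ kicks is genuinely used) then gives $d_m(n)\to 0$ a.s.\ for all $n$ simultaneously; and the Cauchy argument in the complete space $C(\T^d)/\R$ produces $\psi^-$, with the additive constants pinned down consistently via the commutation-with-constants property. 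This is exactly the skeleton of the proof in \cite{IK03}, and your soft steps are all sound.

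Where the proposal is not yet a proof is precisely where you say it isn't: the forgetting lemma under Assumptions~1 and~3. Under Assumption~2 your computation is essentially complete (minor arithmetic: a single unit step yields a $\tfrac{\sqrt d}{2}$-Lipschitz output with oscillation $\le d/8$, and free evolution over time $L$ with drift $b$ has action $\min_k\tfrac{1}{2L}|x+k-y-bL|^2 + \const$, so oscillation $O(d/L)$; your bound ``$\le d/4$'' is off by a constant but harmless). Under Assumption~1 with $b\ne 0$, however, the sentence ``forcing one or several consecutive kicks to be $C^1$-close to $G_y$ pins the backward characteristics'' glosses over the point of genuine difficulty: the quadratic well in Assumption~1 has a fixed depth $\delta$, while the drift and the velocities of minimizers are a priori unbounded, so a single bump kick does not pin characteristics. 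One must first use an a priori velocity bound for finite-time minimizers on the last unit interval (which your $2\sqrt{d}$-Lipschitz observation gives), then iterate kicks $G_y$ (or a finite open string of them) enough times that the quadratic attraction overcomes the bounded residual velocity and the drift, choosing $y$ so that the localization is compatible with $b$; the existence of $G_y$ for \emph{every} $y$ in Assumption~1 is what allows this. Under Assumption~3 with $b=0$, a unique (not necessarily nondegenerate) maximum suffices only because the drift vanishes and the argument becomes a compactness argument rather than a quantitative contraction; this distinction, which you flag, is correct. In short: right architecture, correct identification of the hard step, but the hard step is stated as a claim rather than proven.

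One small but real caveat on the soft part: when you ``choose representatives $\psi^-(\cdot,n)$ so that $K^{\omega,b}_{m,n}\psi^-(\cdot,m)=\psi^-(\cdot,n)$'' you should note that this works because $K^{\omega,b}_{n,0}$ is injective modulo constants on its range and commutes with constants, so fixing $\psi^-(\cdot,0)$ and adjusting $\psi^-(\cdot,n)$ by a constant for each $n<0$ forces all the intermediate identities; this is worth a sentence since $K^{\omega,b}_{n,0}$ itself is far from injective.
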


We now restrict to a specific family of kicked potentials. The following assumption is introduced in \cite{IK03}.
\begin{itemize}
	\item[] \emph{Assumption 4.} Assume that 
	\begin{equation}
		\label{eq:F-omega}
		F^\omega_j(x)=\sum_{i=1}^M \xi_j^i(\omega)F_i(x),    
	\end{equation}
	where $F_i:\T^d\to \R$ are smooth non-random functions, and the vectors $\xi_j(\omega)=(\xi_j^i(\omega))_{i=1}^M$ is an i.i.d sequence of vectors in $\R^M$ with an absolutely continuous distribution. 
\end{itemize}
In \cite{KZ12}, a stronger assumption is used to obtain information on the stationary solutions and the global minimizer. These additional structures provides the mechanism for exponential convergence.  Let $\rho: \R^m \to \R$ be the density of $\xi_j$. 
\begin{itemize}
	\item[] \emph{Assumption 5.}  Suppose assumption 4 holds, and in addition:
	\begin{itemize}
		\item 
		\[
			\bE(|\xi_j|)=\int_{\R^M} |c|\rho(c)dc < \infty.
		\]
		\item For every $1\le i \le M$, there exists non-negative functions $\rho_i \in L^\infty(\R)$ and $\hat{\rho}_i\in L^1(\R^{M-1})$ such that 
		$$ \rho(c)\le \rho_i(c_i) \hat{\rho}_i(\hat{c}), $$
		where $c=(c_1, \cdots, c_M)$, $\hat{c}_i =(c_1, \cdots, c_{i-1}, c_{i+1}, \cdots, c_M)$. 
	\end{itemize}
\end{itemize}
Assumption 5 is rather mild. We only need to avoid the case that $\rho$ is degenerate in some directions. In particular, it is satisfied if $\xi^1_j, \cdots, \xi^M_j$ are i.i.d. random variables with bounded densities and finite mean. 

We now state the main theorem of this paper.
\begin{theorem}
	\label{thm:exp-conv}
	\begin{enumerate}
		\item   Assume that assumption 5 and one of assumption 1 or 2 hold. Assume in addition that the mapping 
		\begin{equation}
			\label{eq:FM}
			(F_1, \cdots, F_M): \T^d \to \R^M
		\end{equation} 
		is an embedding. For $b \in \R^d$, let $\psi^-_\omega$ be the unique stationary solution in Proposition~\ref{prop:backward-min}.  Then there exists a (non-random) $\lambda >0$  and a random variables $N(\omega)>0$ such that almost surely, for all $N > N(\omega)$, 
		\[
			\sup_{\varphi \in C(\T^d)} \| K_{m,n}^{\omega,b}\varphi - \psi^-_\omega(\cdot ,n)\|_*   
			\le  e^{-\lambda N}. 
		\]
		\item Assume that assumption 3 and 5 hold. Then the same conclusions hold for $b=0$. 
	\end{enumerate}
\end{theorem}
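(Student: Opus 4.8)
The plan is to follow the three-step scheme outlined in the introduction (Steps A, B, C), so the proof of Theorem~\ref{thm:exp-conv} is really a matter of assembling the three propositions \ref{prop:local-conv}, \ref{prop:dt-localize}, and the upgrade argument of Section~\ref{sec:upgrade}. Since the statement quantifies over all initial conditions $\varphi$ in the semi-norm $\|\cdot\|_*$, and since by Proposition~\ref{prop:backward-min} we already know $\|K_{m,n}^{\omega,b}\varphi - \psi^-_\omega(\cdot,n)\|_* \to 0$ uniformly in $\varphi$, the first move is to fix $n$ (say $n=0$ by stationarity of the random setting, i.e. by shift-invariance of $P^{\otimes\Z}$) and choose $m_0(\omega)$ so that for all $m \le m_0$ the solution is already within some fixed $\delta_0$ of $\psi^-_\omega$. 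The content is then entirely in showing that this $\delta_0$-closeness self-improves to $e^{-\lambda N}$-closeness over a window of length $N = |m|$.

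First I would invoke Step A: Proposition~\ref{prop:local-conv} reduces the global estimate in $\|\cdot\|_*$ to a purely local one in a small random neighborhood $U_\omega$ of the global minimizer at the relevant times; the point is that outside this neighborhood the Lax--Oleinik minimizers feel the (non-uniform) hyperbolic contraction automatically, so the only place where closeness can fail to improve is near the global minimizer itself. Second, the heart is Step B, Proposition~\ref{prop:dt-localize}: given that $K_{-T,0}^{\omega,b}\varphi$ is $\delta$-close to $\psi^-_\omega$ locally, the backward finite-time minimizer emanating from a point near the minimizer must, at some intermediate time, be $\delta^q$-close to the global minimizing orbit, for some fixed $q>1$, up to an additive exponentially small error $e^{-cT}$. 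This combines the variational structure (a point where the incoming minimizer is $\delta$-close in action forces geometric closeness, via strict convexity of the Lagrangian and the quadratic lower bounds from Assumption~1/3) with non-uniform hyperbolicity of the global minimizer from \cite{KZ12} (the stable/unstable cone contraction along the minimizing orbit, with the random multiplicative constants controlled by Assumption~5 and the embedding hypothesis on $(F_1,\dots,F_M)$). Third, Step C is the bootstrap: feeding the $\delta^q$-closeness of the minimizer back through the variational representation of $K_{-T,0}^{\omega,b}\varphi - \psi^-_\omega$ shows the \emph{solution} is now $C\delta^q$-close (plus $e^{-cT}$); iterating $k$ times over sub-windows gives closeness $\delta^{q^k}$, and since $q>1$ this reaches $e^{-c'T}$ after $k \asymp \log T$ iterations, each consuming only a fraction of the time budget. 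Taking $\lambda$ smaller than the resulting rate and $N(\omega) \ge |m_0(\omega)|$ finishes both part (1) and, verbatim with $b=0$, part (2).

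The main obstacle, and where essentially all the real work lies, is Step B / Proposition~\ref{prop:dt-localize}: the gain of the exponent $q>1$ is genuinely delicate because the hyperbolic neighborhood of the global minimizer is random and its size oscillates from kick to kick, so one cannot simply quote a uniform shadowing or Hadamard--Perron statement. One has to control how long a near-minimizing orbit can linger near the global minimizer before the fluctuating contraction/expansion rates force it exponentially close, and simultaneously track the additive $e^{-cT}$ error so that it never dominates $\delta^q$ during the bootstrap --- this is why the improvement can only be carried out ``up to an exponentially small error'' and why the iteration in Step C must be stopped once $\delta$ itself is exponentially small. Everything else --- the reduction in Step A, the variational comparison in Step C, and the final $\log T$-fold iteration --- is comparatively routine once Proposition~\ref{prop:dt-localize} is in hand, and the probabilistic hypotheses (Assumption~5, the embedding of $(F_1,\dots,F_M)$) enter only to guarantee the almost-sure non-uniform hyperbolicity and the integrability needed to make the random constants finite along the minimizing orbit.
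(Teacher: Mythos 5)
Your proposal identifies the right high-level architecture (reduce to a local problem near the global minimizer, prove a polynomial-to-exponential improvement à la Proposition~\ref{prop:dt-localize}, iterate via Lemma~\ref{lem:backforw}), and you correctly single out Proposition~\ref{prop:dt-localize} as the technical heart. However, the piece that actually constitutes the proof of Theorem~\ref{thm:exp-conv} given Proposition~\ref{prop:local-conv} is glossed over with a mechanism that is not the paper's. You claim that ``outside the neighborhood the Lax--Oleinik minimizers feel the hyperbolic contraction automatically.'' That is not what happens: hyperbolic theory is only applied in a small random neighborhood of the global minimizer, and there is no a priori contraction elsewhere. The actual local-to-global passage uses Proposition~\ref{prop:localize} (every backward minimizer for $K_{-N,M}^{\omega,b}\varphi$, and likewise for $\psi^-$, passes through the $\tilde\rho(\omega)$-neighborhood of $x_0^\omega$ at time $0$ provided $N,M \ge M_0(\omega)$), followed by a direct two-sided action comparison along those minimizers: writing $K_{-N,M}^{\omega,b}\varphi(y_M) = K_{-N,0}^{\omega,b}\varphi(y_0) + A_{0,M}^{\omega,b}(y_0,y_M)$ with $y_0$ near $x_0^\omega$, the local estimate~\eqref{eq:app-const} from Proposition~\ref{prop:local-conv} propagates to a global bound at time $M$. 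This is a variational argument, not a hyperbolic one, and without it your ``Step A'' does not close.

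A second omission: the global bound produced this way is anchored at time $M = M_0(\omega)$, not at time $0$, so one still needs the ergodicity/time-shift argument at the end of the paper's proof (choosing an $\omega$-dependent $k<0$ with $M_0(\theta^k\omega)$ uniformly bounded, applying the bound to $\theta^k\omega$, and absorbing the constants $D_0(\theta^k\omega)e^{\lambda k(\omega)}$ by reducing $\lambda$). Also a small slip in Step C: the upgrade iteration of Lemma~\ref{lem:backforw} is performed back and forth on the \emph{same} fixed $\beta$-good interval $[n_1,n_2]\subset[-2N/3,-N/3]$, so the iterations do not ``consume a fraction of the time budget''; what controls the number of iterations (roughly $\log N$) is the comparison between the power gain $q-\tfrac14>1$ and the exponential floor $e^{-(\lambda'-5\epsilon)N/54}$.
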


\section{Viscosity solutions and the global minimizer}
\label{sec:visc}

Let $I \subset \R$ be an interval. An absolutely continuous curve $\gamma: I \to \T^d$ is called a minimizer if for each interval $[s,t] \subset I$, we have $A^{\omega,b}_{s,t}(\gamma(s), \gamma(t)) = \bA^{\omega,b}(\gamma|_{[s,t]})$. In particular, $\gamma$ is called a forward minimizer if $I = (-\infty, t_0]$, a backward minimizer if $I = [s_0, \infty)$ and a global minimizer if $I = (-\infty, \infty)$. 

Due to the kicked nature of the potential, a minimizer is always linear between integer times. Then any minimizer  $\gamma:[m, n] \to \infty$ is completely determined by the sequence
\begin{equation}
	\label{eq:discrete}
	x_j = \gamma(j), \quad v_j = \dot{\gamma}(j-),\quad  m+1 \le j \le n. 
\end{equation}
The underlying dynamics for the minimizers is given by  family of maps $\Phi_j^\omega:\T^d\times \R^d \to \T^d \times \R^d$
\begin{equation}
	\label{eq:twistmaps}
	\Phi_j^\omega:
	\begin{bmatrix}
		x \\ v
	\end{bmatrix}
	\mapsto
	\begin{bmatrix}
		x + v - \nabla F_j^\omega(x)  \mod \Z^d. 
		\\
		v - \nabla F_j^\omega(x)                 
	\end{bmatrix},
\end{equation}
The maps belong to the so-called \emph{standard family}, and are examples of symplectic, exact and monotonically twist diffeomorphisms.  For $m,n\in \Z$, $m<n$, denote
$$ \Phi^\omega_{m, n}(x,v)=\Phi^\omega_{n-1}\circ \cdots \circ \Phi^\omega_{m}(x,v). $$
The (full) orbit of a vector $(x_n,v_n)$ is given by the sequence
\[
	(x_j, v_j) = \Phi_{n,j}^\omega(x_n, v_n), \quad j >n, \quad (x_j, v_j) = (\Phi_{j,n}^\omega)^{-1}(x_n, v_n), \quad j < n. 
\] 
If  $\gamma:[m,n]\to \T^d$ is a minimizer, then  $(x_j, v_j)$ defined in \eqref{eq:discrete} is an orbit, namely
\[
	\Phi_{j,k}^\omega (x_j, v_j) = (x_k, v_k), \quad m +1 \le j < k \le n. 
\]
In this case, we extend the sequence to $(x_m, v_m) = (\Phi_m)^{-1}(x_{m+1}, v_{m+1})$ and call $(x_j, v_j)_{j = m}^n$ a minimizer. 

The viscosity solution and the minimizers are linked by the following lemma:
\begin{lemma}[\cite{KZ12}, Lemma 3.2]\label{lem:min}
\begin{enumerate}
	\item For $\varphi \in C(\T^d)$  and $m < n \in \Z$, for each $x \in \T^d$ there exists a minimizer $(x_j, v_j)_{j =m}^n$ such that $x_n =x$, and 
	\begin{equation}
		\label{eq:finite-min}
		K^{\omega,b}_{m,n}\varphi(x_n) = \varphi(x_m) + A_{m,n}^{\omega,b}(x_m, x_n). 
	\end{equation}
	Moreover, the minimizer is unique if $\psi(x)  = K^{\omega,b}_{m,n}\varphi(\cdot)$ is differentiable at $x$, and in this case $v_n = \nabla \psi(x) + b$. 
	\item Suppose $\psi^-_\omega(x,n)$ is the stationary solution. Then at every $x\in \T^d$ and $n \in \Z$, there exists a backward minimizer $(x_j, v_j)_{j = -\infty}^n$ such that $x_n = x$
	\[
		K^{\omega, b}_{m,n}\psi^-_\omega(x_n, n) = \psi^-_\omega(x_m, m) + A_{m,n}^{\omega, b}(x_m, x_n), \quad m < n. 
	\]
	Moreover, the minimizer is unique if $\psi^-_\omega(\cdot , n)$ is differentiable at $x$, and in this case $v_n = \nabla \psi^-_\omega(x, n) + b$. 
\end{enumerate}
\end{lemma}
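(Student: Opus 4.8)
The plan is to run the classical Lax--Oleinik / calibrated-curve argument in both its finite- and infinite-horizon forms, exploiting two features of the kicked model: minimizers are piecewise affine, so every action minimization reduces to a finite-dimensional one; and the twist maps $\Phi_j^\omega$ of \eqref{eq:twistmaps} are invertible, so that ``the minimizer is unique'' will follow from ``the endpoint velocity is unique''. \emph{Part 1, existence.} Fix $m<n$ and $x\in\T^d$. Since $\varphi$ is continuous, $A^{\omega,b}_{m,n}(\cdot,x)$ is Lipschitz and $\T^d$ is compact, the minimum in \eqref{eq:lax-bk} is attained at some $x_m$; and since $F^\omega$ is supported on $\Z$, a curve realizing $A^{\omega,b}_{m,n}(x_m,x)$ in \eqref{eq:action} is affine on each $[j,j+1]$, so by Tonelli's theorem that infimum is attained, say by $\gamma$. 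Setting $x_j=\gamma(j)$ and $v_j=\dot\gamma(j-)$ as in \eqref{eq:discrete}, the first-variation (Euler--Lagrange) equations of this problem are precisely the relations $\Phi_j^\omega(x_j,v_j)=(x_{j+1},v_{j+1})$ of \eqref{eq:twistmaps}, so $(x_j,v_j)_{j=m}^n$ is an orbit; since restrictions of action-minimizing curves are again action-minimizing, it is a minimizer in the required sense and $K^{\omega,b}_{m,n}\varphi(x)=\varphi(x_m)+A^{\omega,b}_{m,n}(x_m,x)$.

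\emph{Part 1, uniqueness and the velocity formula.} Expanding $\psi:=K^{\omega,b}_{m,n}\varphi$ through \eqref{eq:lax-bk}--\eqref{eq:action} and using that minimizers are piecewise affine, $\psi(z)$ is a minimum over configurations $(x_m,\dots,x_{n-1})$ of expressions which, as functions of the endpoint $z=x_n$, each equal $\tfrac12|z|^2$ plus an affine function whose linear part is $-\langle x_{n-1}+b,z\rangle$ (coming from the last leg of the action and the $b$-term). Hence, locally near $x$, the map $z\mapsto\psi(z)-\tfrac12|z|^2$ is a minimum of affine functions, i.e.\ concave. If $\psi$ is differentiable at $x$, so is this concave function, which forces every minimizing configuration to take the common value $x_{n-1}=x-\nabla\psi(x)-b$ at time $n-1$; consequently the endpoint velocity $v_n=x_n-x_{n-1}=\nabla\psi(x)+b$ is uniquely determined, and since a minimizer is an orbit of the invertible maps $\Phi_j^\omega$ it is recovered by iterating $(\Phi_j^\omega)^{-1}$ backward from $(x,\nabla\psi(x)+b)$, so it is unique.

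\emph{Part 2.} For each $m<n$ apply Part 1 with $\varphi=\psi^-_\omega(\cdot,m)$, which is Lipschitz by Proposition~\ref{prop:backward-min}; using $K^{\omega,b}_{m,n}\psi^-_\omega(\cdot,m)=\psi^-_\omega(\cdot,n)$, one obtains a minimizer $(x^{(m)}_j,v^{(m)}_j)_{j=m}^n$ with $x^{(m)}_n=x$ calibrating $\psi^-_\omega$ on $[m,n]$, hence on every sub-window $[m',n]$. The one-step operators $K^{\omega,b}_{k,k+1}$ carry $C(\T^d)$ into functions Lipschitz with a constant depending only on $\T^d$ and $|b|$ (the dependence of $A^{\omega,b}_{k,k+1}$ on its second argument is through a quadratic-plus-linear term), hence so is every $K^{\omega,b}_{m,j}\varphi$; since the time-$j$ velocity of a calibrating minimizer lies in $b$ plus the superdifferential of such a function, the velocities $v^{(m)}_j$ for $m<j\le n$ are bounded by a constant independent of $m$, $n$, $j$ and $\omega$. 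Thus for each fixed $j$ the points $(x^{(m)}_j,v^{(m)}_j)$ eventually lie in a fixed compact set, and a diagonal subsequence converges to an orbit $(x_j,v_j)_{j=-\infty}^n$ with $x_n=x$; passing to the limit in the calibration identity on each fixed window $[m',n]$ (using continuity of $\psi^-_\omega(\cdot,m')$, $A^{\omega,b}_{m',n}$ and $\Phi_j^\omega$) shows it is a backward minimizer with $\psi^-_\omega(x,n)=\psi^-_\omega(x_{m'},m')+A^{\omega,b}_{m',n}(x_{m'},x)$ for all $m'<n$. Finally, if $\psi^-_\omega(\cdot,n)$ is differentiable at $x$, then for every $m<n$ the identity $K^{\omega,b}_{m,n}\psi^-_\omega(\cdot,m)=\psi^-_\omega(\cdot,n)$ together with Part 1 forces any calibrating minimizer on $[m,n]$ ending at $x$ to have endpoint velocity $\nabla\psi^-_\omega(x,n)+b$; hence any calibrating backward minimizer ending at $x$ is the orbit through $(x,\nabla\psi^-_\omega(x,n)+b)$, giving both uniqueness and the velocity formula.

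\emph{Main obstacle.} The existence and Euler--Lagrange parts are routine; the actual content is the concavity-plus-differentiability step of Part 1, which simultaneously pins down $v_n$ in terms of $\nabla\psi(x)$ and, via invertibility of the twist maps, yields uniqueness, together with the $\omega$-uniform bound on minimizing velocities that makes the compactness extraction of the backward minimizer in Part 2 work. Both rely on the special structure of the kicked model --- piecewise-affine minimizers and quadratic one-step action functions --- so the argument would need more care for the general Hamiltonian \eqref{eq:gen-HJ}.
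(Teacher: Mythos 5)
The paper does not prove this lemma; it is quoted as Lemma~3.2 of \cite{KZ12} and used as an input, so there is no internal argument to compare against. Your reconstruction must stand on its own, and it does: it is the standard discrete Lax--Oleinik / weak-KAM argument, and I see no essential gap. The core of Part~1 --- writing $\psi(z)-\tfrac12|z|^2$ locally as a minimum over configurations of affine functions of the endpoint $z$, and observing that at a differentiability point every active affine function must share the gradient $\nabla\psi(x)-x$ --- is exactly the right way to pin down $x_{n-1}$ and hence $v_n=\nabla\psi(x)+b$; invertibility of the twist maps $\Phi_j^\omega$ together with the interior discrete Euler--Lagrange relations then propagates uniqueness to the whole orbit. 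In Part~2, the compactness you need for the diagonal extraction is available in a clean uniform form: each $K_{m,j}^{\omega,b}\psi^-_\omega(\cdot,m)=\psi^-_\omega(\cdot,j)$ is $1$-semi-concave, hence $2\sqrt{d}$-Lipschitz by Lemma~\ref{lem:Lipshitz}, a bound independent of $m$, $j$ and $\omega$, which justifies the uniform velocity bound you assert. One cautionary remark: as printed, \eqref{eq:action} has the factor $\tfrac12$ also multiplying the $b\cdot\dot\zeta$ term, which would make the endpoint slope $-(x_{n-1}+\tfrac12 b)$ and give $v_n=\nabla\psi(x)+\tfrac12 b$, contradicting the lemma. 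The Lagrangian consistent with the Hamiltonian $\tfrac12(\nabla\psi+b)^2$ of \eqref{eq:HJ} is $\tfrac12\dot\zeta^2-b\cdot\dot\zeta-F^\omega$; your computation, with linear part $-\langle x_{n-1}+b,z\rangle$, silently uses this corrected form, so you have absorbed a typo in the paper rather than made an error.
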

In case (1) we call $(x_j, v_j)_{j=m}^n$ a minimizer for $K_{m,n}^{\omega, b}\varphi(x_0)$, and in case (2) the orbit $(x_j, v_j)_{j=-\infty^n}$ is called a minimizer for $\psi^-(x_n, n)$. 

The forward minimizer is linked to the forward operator $\chK_{s,t}^{\omega, b}$, defined as
\[
	\chK_{s,t}^{\omega,b}\varphi(x) = \sup_{y \in \T^d} \left\{  \varphi(y) - A_{s,t}^{\omega,b}(x, y) \right\}.
\]
Analog of Proposition~\ref{prop:backward-min} and Lemma~\ref{lem:min} hold, which we summarize below.
\begin{itemize}
	\item For every $b \in \R^d$, almost surely, there exists a unique Lipshitz function $\psi^+_\omega(x, m)$, $m \in \Z$, such that 
	\[
		\chK^{\omega,b}_{m,n}\psi^+(x, n) = \psi^+(x,m), \quad  m < n. 
	\]
	\item For each $\chK^{\omega,b}_{m,n}\varphi(x)$ there exists a minimizer $(x_j, v_j)_{j=m}^n$ such that $x_m = x$ and 
	\[
		\chK^{\omega, b}_{m,n}\varphi(x_m) = \varphi(x_n) - A_{m,n}^{\omega,b}(x_m, x_x).
	\]
	When $\psi = \chK^{\omega, b}_{m,n}\varphi$ is differentiable at $x$ we have $v_m = \nabla \psi(x) + b$. 
	\item For each $x \in \T^d$, and $m \in \Z$, there exists a forward minimizer $(x_j, v_j)_{j = m}^\infty$ such that $x_m = x$,
	\[
		\chK^{\omega,b}_{m,n}\psi^+_\omega(x_m, m) = \psi^+_\omega(x_n, n) - A_{m,n}^{\omega, b}(x_m, x_n), \quad m<n,
	\]
	and $v_m = \nabla \psi^+_\omega(x, m)$ if $\psi^+(\cdot, m)$ is differentiable at $x$. 
\end{itemize}
\vskip .1in

The global minimizer is characterized by both $\psi^-_\omega$ and $\psi^+_\omega$. 
\begin{proposition}\label{prop:glob}
Assume that Assumption 4 holds, and one of Assumptions 1 and 2 holds. Assume in addition, the map \eqref{eq:FM} is an embedding. Then for $b\in \R^d$, almost every $\omega$, there exists a unique global minimizer $(x_j^\omega, v_j^\omega)_{j \in \Z}$. For each $j \in \Z$,  $x_j^\omega$ is the unique $x\in \T^d$ reaching the minimum in
\begin{equation}
	\label{eq:min-var}
	\min_x \{ \psi^-_\omega(x,j) - \psi^+_\omega(x, j)\}.
\end{equation}
Moreover, $\psi^\pm_\omega(\cdot, j)$ are both differentiable at $x_j$, and $v_j = \nabla \psi^-_\omega(x, j) + b = \nabla \psi^+_\omega(x, j) + b$. 
\end{proposition}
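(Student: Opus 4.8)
The plan is to characterize the global minimizer via the pair $(\psi^-_\omega, \psi^+_\omega)$ and extract uniqueness from Assumption~4 together with the embedding hypothesis. First, I would establish existence: given $j$, pick $x_j$ attaining the minimum in \eqref{eq:min-var}. Using Lemma~\ref{lem:min}(2) there is a backward minimizer $(x_k, v_k)_{k \le j}$ for $\psi^-_\omega$ ending at $x_j$, and using the forward analogue there is a forward minimizer $(x_k, v_k)_{k \ge j}$ starting at $x_j$. The standard argument — that $f(x) := \psi^-_\omega(x,j) - \psi^+_\omega(x,j)$ has a minimum at $x_j$ forces the two one-sided slopes to match — shows these two half-orbits glue into a single bi-infinite orbit of the maps $\Phi^\omega_k$, hence a global minimizer. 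Concretely, if $\psi^\pm_\omega(\cdot,j)$ were differentiable at $x_j$ then at a minimum of $f$ we get $\nabla \psi^-_\omega(x_j,j) = \nabla \psi^+_\omega(x_j,j)$, so the incoming velocity $v_j = \nabla\psi^-_\omega + b$ equals the outgoing velocity, giving a genuine orbit. The differentiability of $\psi^\pm_\omega$ at $x_j$ is itself part of the claim and must be proven, not assumed; I'd derive it from the fact that at a point where both the ``sup-type'' function $\psi^-_\omega$ and (minus) the ``inf-type'' function $-\psi^+_\omega$ are sandwiched, semiconcavity of $\psi^-_\omega$ and semiconvexity of $\psi^+_\omega$ (both standard for Lax--Oleinik solutions of a convex Hamiltonian) force two-sided differentiability at the touching point — a Mather-type argument.

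Second, and this is the crux, I would prove uniqueness of $x_j$ (equivalently, that the minimum in \eqref{eq:min-var} is attained at a single point) for almost every $\omega$. This cannot be true for an arbitrary fixed realization of the potential — it is where randomness and the embedding assumption enter. The idea is: suppose the minimum in \eqref{eq:min-var} were attained at two distinct points $x, x' \in \T^d$. Following Lemma~\ref{lem:min}, each gives a pair of one-sided minimizers, so we obtain two distinct global minimizers $\gamma \ne \gamma'$. I would then show that the event ``there exist two distinct global minimizers'' has probability zero. The mechanism: condition on the potentials $F^\omega_k$ for all $k \ne 0$; the two global minimizers pass through configurations $(x_0, v_0) \ne (x'_0, v'_0)$ at time $0$, and the extra kick $F^\omega_0(x) = \sum_i \xi_0^i(\omega) F_i(x)$ with absolutely continuous coefficients $\xi_0$ must simultaneously satisfy two distinct ``matching/minimality'' conditions linking the incoming and outgoing data at both points. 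Because $(F_1,\dots,F_M)$ is an embedding, the gradients $\nabla F_i$ separate points and the linear map $\xi_0 \mapsto (\nabla F^\omega_0(x_0), \nabla F^\omega_0(x'_0))$ has enough nondegeneracy that the locus of ``bad'' $\xi_0$ — those allowing the coexistence of two minimizers with prescribed incoming data — is a positive-codimension (hence null) set with respect to the absolutely continuous law of $\xi_0$. Integrating over the conditioning gives probability zero. This kind of ``the kick kills ties'' argument is presumably exactly what \cite{KZ12} set up, so I would invoke the relevant uniqueness statement from there if available; otherwise, the careful bookkeeping of which codimension-one conditions the bad event imposes, and checking that the embedding guarantees these conditions are nontrivial, is the main obstacle.

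Finally, with uniqueness of $x_j$ for each $j$ in hand, I'd tie up loose ends: the uniqueness of $x_j$ for every $j$ simultaneously follows because the global minimizer is an orbit (so $x_j$ is determined by $x_0$ through $\Phi^\omega_{0,j}$), and a single almost-sure event suffices. Differentiability of $\psi^\pm_\omega(\cdot, j)$ at $x_j$ and the velocity identity $v_j = \nabla\psi^-_\omega(x_j,j) + b = \nabla\psi^+_\omega(x_j,j)+b$ then come from the touching argument sketched above. I expect Steps concerning existence, gluing, and differentiability to be essentially routine given Lemma~\ref{lem:min} and standard semiconcavity of viscosity solutions; the genuinely hard point — and the only place the hypotheses of this proposition are really used beyond what Proposition~\ref{prop:backward-min} already gives — is the almost-sure uniqueness, i.e.\ showing the random kick destroys any tie in \eqref{eq:min-var}, for which the embedding assumption on $(F_1,\dots,F_M)$ is essential.
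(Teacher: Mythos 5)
The paper never actually proves this proposition: like Propositions~\ref{prop:hyp} and~\ref{prop:unst-stab} around it, it is recalled from prior work (the citation tag is simply missing), so there is no in-paper proof to compare against.

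That said, your proposal is worth assessing on its own terms. The existence and differentiability parts are sound and in line with what the literature does: a point $x_j$ realizing the minimum of $\psi^-_\omega(\cdot,j)-\psi^+_\omega(\cdot,j)$ is a touching point of a $1$-semi-concave function against a semi-convex one, so by Lemma~\ref{lem:grad-lip} both are differentiable there with matching gradients, which is exactly what lets the backward minimizer of $\psi^-$ and the forward minimizer of $\psi^+$ glue into a single orbit of the $\Phi^\omega_k$, and the barrier $\psi^--\psi^+$ being constant along that orbit shows it is a genuine global minimizer. You do leave implicit the step that the glued curve globally minimizes the action (not merely that it is a $\Phi^\omega$-orbit), but that is a standard ``calibrated curve'' argument and not a gap.

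The problem is the uniqueness step, which is where you depart from the literature and where your sketch does not close. You condition on all kicks $F^\omega_k$, $k\ne 0$, and argue the set of $\xi_0$ permitting two distinct global minimizers has positive codimension. But the global minimizers, and indeed $\psi^\pm_\omega$ themselves, depend on $\xi_0$ in a complicated nonlinear way, so ``there exist two argmins of $\psi^--\psi^+$'' is not a finite list of explicit algebraic constraints on $\xi_0$ whose nondegeneracy the embedding hypothesis certifies; one would need a genuine parametric transversality argument controlling how the argmin set of $\psi^--\psi^+$ varies with $\xi_0$, and that control is far from free. The uniqueness of the two-sided minimizer in this setting is instead established by a variational/dynamical argument going back to \cite{EKMS00} and carried out in \cite{IK03}: one uses the uniform convergence $K^{\omega,b}_{m,n}\varphi\to\psi^-$ (and its forward analogue) together with the no-crossing/Lipschitz graph property of minimizers (Lemma~\ref{lem:lipsht-est}) to show two global minimizers would have to merge, and this already works under Assumptions~1--3 without invoking the embedding. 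Your instinct that the embedding of $(F_1,\dots,F_M)$ is essential for \emph{some} nondegeneracy in \cite{KZ12} is right, but the place it is used is the quadratic lower bound \eqref{eq:non-deg} and the nonvanishing of the Lyapunov exponents in Proposition~\ref{prop:hyp}, not the bare uniqueness of the global minimizer. So the honest option you mention at the end---invoking the uniqueness statement from \cite{IK03}/\cite{KZ12}---is in fact the correct move; the conditioning-on-$\xi_0$ argument as written is a heuristic with a real gap.
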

The function 
\begin{equation}
	\label{eq:psi-inf}
	Q^\infty_\omega(x,j)  := \psi^-_\omega(x,j) - \psi^+_\omega(x, j)
\end{equation}
will serve an important purpose for the discussions below. 

The random potentials $F_j^\omega$ are generated by a stationary random process, so there exists a measure preserving transformation $\theta$ on the probability space $\Omega$ satisfying
\begin{equation}
	\label{eq:time-shift}
	F^\omega_{n+m}(x) = F^{\theta^m \omega}_{n}(x).   
\end{equation}
The family of maps $\Phi^\omega_j$ then defines a non-random transformation 
\[
	\hat{\Phi}(x,v, \omega) = (\Phi_0(x,v), \theta \omega)
\]
on the space $\T^d \times \R^d \times \Omega$. Then from Proposition~\ref{prop:glob},
\[
	(x_0^{\theta\omega}, v_0^{\theta \omega}) = \Phi_0^\omega(x_0^\omega, v_0^\omega)
\]
and the probability measure
\[
	\nu(d(x,v), d\omega) = \delta_{(x_0^\omega, v_0^\omega)}P(d\omega)
\]
is invariant and ergodic under $\hat{\Phi}$. The map $D\Phi_0^\omega: \T^d \times \R^d \times \Omega \to Sp(d)$, where $Sp(d)$ is the group of all $2d \times 2d$ symplectic matrices, defines a cocycle over $\hat{\Phi}$. Under Assumption 5, its Lyapunov exponents $\lambda_1(\nu), \cdots, \lambda_{2d}(\nu)$ are well defined, and due to symplecticity, we have 
\[
	\lambda_1(\nu)\le \cdots \lambda_d(\nu) \le 0 \le \lambda_{d+1}(\nu) \le \cdots \le \lambda_{2d}(\nu),
\]
and $\lambda_i = -\lambda_{2d-i+1}$. 

There is a close relation between the non-degeneracy of the variational problem \eqref{eq:min-var}, and non-vanishing of the Lyapunov exponents for the associated cocycle. 

\begin{proposition}[\cite{KZ12}, Proposition 3.10]\label{prop:hyp}
Assume that assumption 5 and one of assumptions 1 or 2 holds. Assume in addition that the map \eqref{eq:FM} is an embedding. Then for all $b\in \R^d$, for a.e. $\omega$, the following hold.
\begin{enumerate}
	\item There exist $C(F,\rho), R(F,\rho)>0$ depending only on $F_1, \cdots F_M$ in \eqref{eq:FM} and the density $\rho$ of $\xi_j$, and a positive random variable $a(\omega)>0$ such that 
	\begin{equation}
		\label{eq:non-deg}
		Q_\omega^\infty(x, 0) - Q_\omega^\infty(x_0^\omega, 0) \ge  a(\omega)\|x-x_0^\omega\|^2, \quad \|x- x_0^\omega\| <  R(F, \rho), 
	\end{equation}
	with
	\begin{equation}
		\label{eq:moment-a}
		\bE (a(\omega)^{-\frac12}) < C(F, \rho). 
	\end{equation}
	\item The Lyapunov exponents of $\nu$ satisfy
	$$ \lambda_d(\nu)<0<\lambda_{d+1}(\nu).$$
\end{enumerate}
\end{proposition}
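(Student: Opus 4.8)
The plan is to deduce Proposition~\ref{prop:hyp} from the structural results already developed in \cite{KZ12}, organizing the argument around two logically separate pieces. The first piece, the quantitative non-degeneracy estimate \eqref{eq:non-deg} together with the moment bound \eqref{eq:moment-a}, is essentially a statement about the geometry of the stationary solutions $\psi^\pm_\omega$ near the global minimizer, and the second piece, the strict hyperbolicity $\lambda_d(\nu)<0<\lambda_{d+1}(\nu)$, follows from it by a cocycle argument. So I would first establish \eqref{eq:non-deg}, and then derive the Lyapunov exponent statement as a consequence.

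For the non-degeneracy estimate, the idea is to exploit Assumptions 4 and 5 directly. Write $Q^\infty_\omega(x,0) = \psi^-_\omega(x,0) - \psi^+_\omega(x,0)$; by Proposition~\ref{prop:glob}, $x_0^\omega$ is the unique minimizer of $Q^\infty_\omega(\cdot,0)$ and $\psi^\pm_\omega$ are both differentiable there with matching gradients, so $\nabla Q^\infty_\omega(x_0^\omega,0)=0$. The point is to get a \emph{uniform in law} quadratic lower bound with a controlled inverse-square-root moment. The mechanism is that the backward solution $\psi^-_\omega(\cdot,0)$ is, by the Lax--Oleinik formula and Lemma~\ref{lem:min}, obtained by minimizing over initial conditions at time $-1$ followed by applying the kick $F^\omega_{-1}=\sum_i \xi^i_{-1}F_i$, and similarly $\psi^+_\omega$ involves $\xi_0$; the randomness of these kick coefficients, together with the embedding hypothesis on $(F_1,\dots,F_M)$, forces a genuinely curved (non-flat) behavior of $Q^\infty_\omega$ near its minimum, with the curvature constant $a(\omega)$ being a measurable function of finitely many $\xi_j$'s whose tail is controlled by the density bound $\rho(c)\le \rho_i(c_i)\hat\rho_i(\hat c)$ in Assumption 5. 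Concretely, one shows $a(\omega)^{-1/2}$ is dominated by an expression that is integrable because a one-dimensional bounded-density marginal cannot concentrate too badly; the constants $C(F,\rho), R(F,\rho)$ come out depending only on $F_1,\dots,F_M$ and $\rho$. This is exactly the content of Proposition 3.10 in \cite{KZ12}, and I would cite and reproduce that argument.

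For the hyperbolicity, once \eqref{eq:non-deg} holds I would argue as follows. The global minimizer $(x_j^\omega,v_j^\omega)$ is a minimizing orbit of the monotone twist family $\Phi^\omega_j$, and general variational (Aubry--Mather type) theory for exact monotone twist maps says that a minimizing orbit carries no conjugate points: the second variation of the action along it is positive definite on compactly supported variations. Combined with the two-sided quadratic lower bound \eqref{eq:non-deg} on $Q^\infty_\omega$ at a single time $j=0$ (and its time-shifted versions, obtained by replacing $\omega$ with $\theta^j\omega$, which is legitimate by stationarity \eqref{eq:time-shift}), one gets that the stable and unstable subspaces of the linearized cocycle $D\Phi^\omega_j$ along the global minimizer are transverse at every time, with an angle bounded below by a quantity controlled by $a(\theta^j\omega)$. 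Because $\bE(a(\omega)^{-1/2})<\infty$, this transversality gap is not summably degenerate, and a standard argument (à la the Oseledets theorem applied to the cocycle over the ergodic system $(\hat\Phi,\nu)$, using that a genuinely present stable/unstable splitting with non-collapsing angle precludes a zero exponent in the middle) yields $\lambda_d(\nu)<0<\lambda_{d+1}(\nu)$; the symplectic symmetry $\lambda_i=-\lambda_{2d-i+1}$ then does the bookkeeping. Again this is the conclusion of Proposition 3.10 in \cite{KZ12}.

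The main obstacle, and the only genuinely hard point, is the uniform quadratic lower bound with the inverse-square-root moment \eqref{eq:moment-a}: pointwise non-degeneracy of a minimizing variational problem is soft, but controlling the \emph{distribution} of the curvature constant $a(\omega)$ — ruling out that it can be anomalously small with non-integrable frequency — requires genuinely using the absolute continuity and the $L^\infty$ marginal bound in Assumption 5, and tying the curvature to the explicit form $F^\omega_j=\sum_i\xi^i_j F_i$ via the embedding assumption. Passing from \eqref{eq:non-deg} to the strict separation of Lyapunov exponents is comparatively routine given the twist-map variational structure. Since both assertions are precisely Proposition 3.10 of \cite{KZ12}, the cleanest route is to invoke that result directly, which is what the statement does.
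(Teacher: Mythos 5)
This proposition is imported verbatim from \cite{KZ12} (Proposition 3.10), and the present paper offers no internal proof --- it is cited, not reproven. You recognize this correctly, and your recommendation to ``invoke that result directly'' matches exactly what the paper does. In that narrow sense there is nothing to compare, and the citation is the right move.

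That said, a few remarks on your accompanying sketch, since it goes beyond what the paper does and contains some claims that would not survive being turned into a proof. First, the assertion that $a(\omega)$ is ``a measurable function of finitely many $\xi_j$'s'' is not quite right: $Q^\infty_\omega(\cdot,0) = \psi^-_\omega(\cdot,0) - \psi^+_\omega(\cdot,0)$ depends on the entire past (through $\psi^-$) and the entire future (through $\psi^+$). What the argument in \cite{KZ12} actually exploits is that $\psi^-_\omega(\cdot,0)$ and $\psi^+_\omega(\cdot,0)$ depend on \emph{disjoint} sets of kicks (negative versus nonnegative indices), so one may condition on one side and use the single kick on the other side as the source of randomness that controls the tail of $a(\omega)$; this is where the $L^\infty$ marginal bound in Assumption 5 enters. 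Second, and more substantively, your claim that statement (2) \emph{follows from} statement (1) ``by a cocycle argument'' is unconvincing as phrased and likely misrepresents the logical structure of the original proof. You invoke transversality of ``the stable and unstable subspaces of the linearized cocycle,'' but the Oseledets splitting into stable and unstable subspaces is only available after one knows the exponents are nonzero --- which is precisely (2). One can avoid the circularity by instead working with the Lagrangian subspaces tangent to the graphs of $\nabla\psi^\pm_\omega$ at the minimizer (these exist from the variational structure without hyperbolicity), but connecting a quadratic non-degeneracy estimate for $Q^\infty$ at a single time to an exponential dichotomy for the cocycle still requires a genuine dynamical argument that your sketch does not supply. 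If anything, the reverse direction is closer to how such estimates typically unwind: hyperbolicity yields local stable/unstable manifolds coinciding with the graphs of $\nabla\psi^\pm$ (compare Proposition~\ref{prop:unst-stab} in this very paper), and the quadratic separation of the two graphs near the minimizer is then a consequence. None of this affects the correctness of \emph{citing} the result, but the derivational claims in your outline should not be taken at face value.
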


The second conclusion of Proposition~\ref{prop:hyp} implies the orbit $(x_j^\omega, v_j^\omega)$ for the sequence of maps $\Phi_j^\omega$ is non-uniformly hyperbolic. In particular, it follows that there exists local unstable and stable manifolds. It is shown in \cite{KZ12} that the graph of the gradient of the viscosity solutions locally coincide with the unstable and stable manifolds.

\begin{proposition}[\cite{KZ12}, Theorem 6.1]\label{prop:unst-stab}
Under the same assumptions as Proposition~\ref{prop:hyp}, for each $\epsilon>0$,  there exists positive random variables $r(\omega)>0$, $C(\omega)>1$, such that the following hold almost surely.
\begin{enumerate}
	\item There exists $C^1$ embedded submanifolds $W^u(x_0^\omega, v_0^\omega)$ and $W^s(x_0^\omega, v_0^\omega)$, such that 
	\[
		(x, \nabla \psi^-_\omega(x, 0) + b) \in W^u(x_0^\omega, v_0^\omega), \quad 
		(x, \nabla \psi^+_\omega(x,0)+ b) \in W^s(x_0^\omega, v_0^\omega)
	\]
	for all $\|x - x_0^\omega\| < r(\omega)$. 
	\item For every $\|x - x_0^\omega\| < r(\omega)$, let $(x_j^-, v_j^-)_{j \le 0}$ and $(x_j^+, v_j^+)_{j \ge 0}$ be the backward and forward minimizers satisfying $x_0^\pm = x$.  Then
	\[
		\| (x_j^\omega, v_j^\omega) - (x_j^-, v_j^-) \| \le C(\omega) e^{-\lambda'|j|}, \quad j \le 0,
	\]
	\[
		\| (x_j^\omega, v_j^\omega) - (x_j^+, v_j^+) \| \le C(\omega) e^{-\lambda'|j|}, \quad j \ge 0,
	\]
	where $\lambda' = \lambda_{d+1}(\nu) - \epsilon$. 
\end{enumerate}
\end{proposition}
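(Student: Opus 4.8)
The idea is to produce $W^u$ and $W^s$ as the Pesin local unstable and stable manifolds of the global minimizer orbit for the symplectic cocycle $D\Phi^\omega_0$ over $\hat\Phi$, and then to identify the gradient graphs of $\psi^-_\omega$ and $\psi^+_\omega$ with them via the non-degeneracy \eqref{eq:non-deg}. By Proposition~\ref{prop:hyp}(2) we have $\lambda_d(\nu)<0<\lambda_{d+1}(\nu)$, so $(x_j^\omega,v_j^\omega)$ is non-uniformly hyperbolic with an Oseledets splitting $\R^{2d}=E^s_j\oplus E^u_j$, $\dim E^s_j=\dim E^u_j=d$. I would first apply the non-uniform (Pesin) stable and unstable manifold theorem for sequences of diffeomorphisms: for every $\epsilon>0$ and a.e.\ $\omega$ it yields $C^1$ local unstable and stable manifolds $W^u_{\mathrm{loc}}(x_j^\omega,v_j^\omega)$ and $W^s_{\mathrm{loc}}(x_j^\omega,v_j^\omega)$, which are graphs over $E^u_j$ and $E^s_j$ of a random radius $r(\theta^j\omega)$ that varies subexponentially along the orbit, are locally invariant under the maps $\Phi^\omega_j$, and are characterized by the property that a point lies on $W^u_{\mathrm{loc}}$ (resp.\ $W^s_{\mathrm{loc}}$) exactly when its backward (resp.\ forward) orbit stays in the shrinking ball $B\big((x_j^\omega,v_j^\omega),\,r(\theta^j\omega)\big)$; moreover two orbits that both lie on $W^u_{\mathrm{loc}}$ approach backward at rate $e^{-(\lambda_{d+1}(\nu)-\epsilon)|j|}$, and two orbits on $W^s_{\mathrm{loc}}$ approach forward at the same rate, using $\lambda_d=-\lambda_{d+1}$. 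Set $W^u:=W^u_{\mathrm{loc}}(x_0^\omega,v_0^\omega)$ and $W^s:=W^s_{\mathrm{loc}}(x_0^\omega,v_0^\omega)$. Since the $\Phi^\omega_j$ belong to the standard family and are monotone twist, their unstable directions lie in an invariant cone transverse to the vertical fiber $\{0\}\times\R^d$, so $W^u$ is, near $x_0^\omega$, a $C^1$ graph over a neighborhood of $x_0^\omega$ in $\T^d$, and likewise $W^s$.

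The heart of the matter is to show that for $x$ in a suitable random ball around $x_0^\omega$ the backward minimizer $(x_j^-,v_j^-)_{j\le0}$ with $x_0^-=x$ and $v_0^-=\nabla\psi^-_\omega(x,0)+b$ stays within $r(\theta^j\omega)$ of $(x_j^\omega,v_j^\omega)$ for all $j\le0$. By the $\hat\Phi$-invariance of the global minimizer, \eqref{eq:non-deg} holds at every time $j$ with coefficient $a(\theta^j\omega)$, and $\bE(a(\omega)^{-1/2})<\infty$ forces $a(\theta^j\omega)$ to decay at most subexponentially. Combining this with the monotone twist geometry, the identity $Q^\infty_\omega=\psi^-_\omega-\psi^+_\omega$ (with $\psi^+_\omega$ calibrated along the global minimizer), and a crossing/surgery comparison of actions, I would bound $\|x-x_0^\omega\|$ from below in terms of any drift $\rho$ of the backward minimizer away from the global minimizer at a time $j$ and the relevant $a(\theta^j\omega)$'s; choosing $x$ close enough to $x_0^\omega$ then rules out such a drift on the scale $r(\theta^j\omega)$. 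Once this \emph{a priori} shadowing bound holds, the characterization from the first paragraph places the whole backward orbit of $(x,\nabla\psi^-_\omega(x,0)+b)$ on $W^u$, and in particular $(x,\nabla\psi^-_\omega(x,0)+b)\in W^u$.

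To finish, note that $\psi^-_\omega(\cdot,0)$ is semiconcave (being a Lax--Oleinik image) and, by Proposition~\ref{prop:glob}, differentiable at $x_0^\omega$ with $(x_0^\omega,v_0^\omega)\in W^u$; since $W^u$ is a $C^1$ graph over $\T^d$ near $x_0^\omega$, the reachable gradients of $\psi^-_\omega$ near $x_0^\omega$ all lie on this single graph, which forces $\psi^-_\omega(\cdot,0)$ to be $C^1$ on a neighborhood of $x_0^\omega$ with $(x,\nabla\psi^-_\omega(x,0)+b)\in W^u$ there, giving part (1) for $W^u$; the estimate in part (2) for $j\le0$ is then the backward contraction rate on $W^u$, with $\lambda'=\lambda_{d+1}(\nu)-\epsilon$ and $C(\omega)$ the tempering constant. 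The claims for $\psi^+_\omega$, $W^s$ and forward minimizers ($j\ge0$) follow by running the identical argument for the forward operator $\chK^{\omega,b}_{m,n}$ and the time-reversed dynamics, where $E^u$ and $E^s$ exchange roles and the relevant rate is $|\lambda_d(\nu)|=\lambda_{d+1}(\nu)$. The main obstacle is the middle step: turning the single quadratic bound \eqref{eq:non-deg} into a dynamical shadowing estimate valid along the entire backward orbit and matching the rate of approach of a nearby backward minimizer against the subexponential shrinking of the Pesin charts---this is exactly where the moment bound $\bE(a(\omega)^{-1/2})<\infty$ and the twist geometry are needed.
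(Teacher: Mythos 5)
This proposition is not proved in the present paper; it is imported wholesale from \cite{KZ12} (Theorem 6.1), and the only material here bearing on its proof is the discussion inside the proof of Lemma~\ref{lem:tempered-arK}, which extracts from sections 6--7 of \cite{KZ12} the Pesin/linearizing coordinates $P_j$ around $(x_j^\omega,v_j^\omega)$, the fact that any $(y,\nabla\psi^-_\omega(y,0))$ sufficiently close to $(x_0^\omega,v_0^\omega)$ in those coordinates lies on the local unstable manifold, and the tempered control on the radius at which this holds. Your sketch is aligned with that route: non-uniform hyperbolicity from Proposition~\ref{prop:hyp}(2), Pesin local stable/unstable manifolds, and then identification of the gradient graphs of $\psi^\mp_\omega$ with $W^{u/s}$ using the non-degeneracy \eqref{eq:non-deg}. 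The closing observation that the twist structure and semi-concavity force the graphs to be transverse to the vertical and hence genuine graphs over the base is also in the right spirit.

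The place where your outline is genuinely incomplete --- and which you yourself flag as ``the main obstacle'' --- is the shadowing step, and you should be more precise about what replaces the ``crossing/surgery comparison of actions''. The actual mechanism in this framework is the Lyapunov monotonicity \eqref{eq:inf-lya} of $Q^\infty_\omega=\psi^-_\omega-\psi^+_\omega$ along backward minimizers, combined with the two-sided quadratic bound \eqref{eq:metric}: these give, for $j\le 0$, the estimate $a(\theta^j\omega)\|y_j-x_j^\omega\|^2\le Q^\infty(y_0,0)-Q^\infty(x_0^\omega,0)\le K(\omega)\|y_0-x_0^\omega\|^2$, i.e.\ $\|y_j-x_j^\omega\|\le (K(\omega)/a(\theta^j\omega))^{1/2}\,\|y_0-x_0^\omega\|$. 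By itself this is only a subexponential (tempered) bound and does \emph{not} directly dominate the subexponentially shrinking Pesin radius $r(\theta^j\omega)$ for all $j$, so ``choosing $x$ close enough'' is not sufficient without more structure. What rescues the argument in \cite{KZ12} is that this crude bound is only needed to get the orbit inside the Pesin chart at one time; there the backward-invariant Lipschitz gradient graph is forced into the unstable cone (this is where the twist and semi-concavity, via Lemma~\ref{lem:grad-lip}, enter), and the cone criterion then identifies it with $W^u$ and supplies the uniform backward contraction rate $e^{-\lambda'}$, which from that point on dominates the shrinking of the chart. Spelling out this bootstrap --- from the variational Lyapunov estimate, to cone trapping inside a single Pesin block, to the hyperbolic contraction --- is exactly the content missing from your middle step; the ingredients you name ($\bE(a^{-1/2})<\infty$, the twist geometry) are the right ones, but the hinge is \eqref{eq:inf-lya}.
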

\begin{remark}
In Lemma~\ref{lem:tempered-arK} we will show that the random variables $r(\omega), C(\omega)$ in item (2) can be chosen to satisfy an additional \emph{tempered} property. 
\end{remark}


\section{Properties of the viscosity solutions}
\label{sec:prop-visc}

\subsection{Semi-concavity}

Given $C>0$, we say that a function $f: \R^d \to \R$ is $C$ semi-concave if for any $x\in \R^d$, there exists a linear form $l_x: \R^d \to \R$ such that
\[
	f(y) - f(x) \le l_x(y-x) + C\|y-x\|^2, \quad y \in \R^d. 
\] 
A function $\varphi: \T^d \to \R$ is called $C$ semi-concave if it is $C$ semi-concave as a function lifted to $\R^d$. The linear form $l_x$ is called a subdifferential at $x$. If $\varphi$ is differentiable at $x\in \T^d$, then the subdifferential $l_x$ is unique and is equal to $d \varphi(x)$. A semi-concave function is Lipshitz. 

\begin{lemma}[\cite{fat08}, Proposition 4.7.3]\label{lem:Lipshitz}
If $\varphi$ is  continuous and $C$ semi-concave on $\T^d$, then $\varphi$ is $2C\sqrt{d}$-Lipshitz. 
\end{lemma}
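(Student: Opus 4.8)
**Proof proposal for Lemma (Proposition 4.7.3 of \cite{fat08}): a continuous $C$ semi-concave function on $\T^d$ is $2C\sqrt{d}$-Lipschitz.**

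The plan is to exploit the two-sided bound that semi-concavity gives once we control the size of the subdifferentials, and the key point is that on a compact manifold (here $\T^d$) the subdifferentials cannot be large, because a linear form of large norm would force the function to drop below its minimum somewhere on a coordinate circle. First I would fix $x \in \T^d$, lift $\varphi$ to $\R^d$, and let $l_x$ be a subdifferential at $x$, so that
\[
	\varphi(y) - \varphi(x) \le l_x(y - x) + C\|y - x\|^2, \qquad y \in \R^d.
\]
I would show $\|l_x\| \le 2C\sqrt{d}$. To do this, write $l_x(z) = \langle p, z \rangle$ with $p = (p_1,\dots,p_d)$, and test the inequality along the $i$-th coordinate direction: taking $y = x - \operatorname{sgn}(p_i)\, e_i$ (a full loop around the $i$-th circle, so $\varphi(y) = \varphi(x)$ by periodicity) gives $0 \le -|p_i| + C$, hence $|p_i| \le C$ for each $i$, and therefore $\|p\| \le C\sqrt{d}$.

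With the uniform bound $\|l_x\| \le C\sqrt d$ in hand, the Lipschitz estimate follows from the semi-concavity inequality used locally. Given $x, y \in \T^d$ with lifts chosen so that $\|y - x\| = d_{\T^d}(x,y) \le \tfrac12\sqrt d$ (the diameter of $\T^d$), I would apply the inequality in one direction,
\[
	\varphi(y) - \varphi(x) \le \langle p, y - x \rangle + C\|y-x\|^2 \le \|p\|\,\|y-x\| + C\|y-x\|^2,
\]
and by symmetry (using a subdifferential at $y$) the reverse bound
\[
	\varphi(x) - \varphi(y) \le \|q\|\,\|x - y\| + C\|x-y\|^2,
\]
so that $|\varphi(x) - \varphi(y)| \le C\sqrt d \,\|x-y\| + C\|x-y\|^2 \le \bigl(C\sqrt d + C\|x-y\|\bigr)\|x-y\|$. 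Since $\|x-y\| \le \tfrac12\sqrt d \le \sqrt d$, the bracket is at most $2C\sqrt d$, giving $|\varphi(x)-\varphi(y)| \le 2C\sqrt d\,\|x-y\|$ for nearby points, and hence globally for the geodesic distance by concatenating along a shortest path.

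The main obstacle is the first step: translating "compactness of $\T^d$" into the quantitative bound $\|l_x\| \le C\sqrt d$ on the subdifferentials, and in particular choosing the right test points (closing up each coordinate loop) so that the periodicity of $\varphi$ kills the left-hand side while the quadratic error term stays controlled. Everything after that is a routine two-sided estimate. One should also be slightly careful that the local Lipschitz constant $2C\sqrt d$ is genuinely a global one; this is immediate because the geodesic distance on $\T^d$ is realized by a path that can be subdivided into short segments, along each of which the above estimate applies, and the constant does not degrade under concatenation.
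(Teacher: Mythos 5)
Your proof is correct, and since the paper only cites Fathi's book for this lemma without reproducing a proof, there is nothing in the text to compare it against line-by-line; but what you do is exactly the standard argument: the subdifferential at every point is uniformly bounded by periodicity (testing the semi-concavity inequality against the loop $y = x - \operatorname{sgn}(p_i)e_i$ gives $|p_i| \le C$ for each coordinate, hence $\|l_x\| \le C\sqrt{d}$), and then the semi-concavity inequality applied at the lift realizing the geodesic distance gives the Lipschitz bound with constant $C\sqrt{d} + C\,\|x-y\| \le 2C\sqrt{d}$ using $\|x-y\| \le \tfrac12\sqrt{d} \le \sqrt{d}$. One small remark: the closing comment about concatenating along a shortest path is unnecessary, since the diameter of $\T^d$ is already $\tfrac12\sqrt{d}$ and so the estimate you derived holds directly for every pair of points once the lift is chosen to realize the geodesic distance; but this redundancy does not affect correctness.
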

\begin{lemma}[\cite{fat08}]\label{lem:grad-lip}
Suppose both $\varphi_1$ and $-\chphi_2$ is $C$ semi-concave, then over the set $\argmin_x\{\varphi_1(x) - \chphi_2(x)\}$,  $\varphi_1, \chphi_2$ are differentiable, $\nabla \psi_1(x) = \nabla \psi_2(x)$, and $\nabla \varphi_1(x)$ is $6C$-Lipshitz over the set $\argmin_x\{\varphi_1(x) - \chphi_2(x)\}$. 
\end{lemma}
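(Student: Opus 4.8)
The plan is to reduce everything to the elementary fact that a $C$ semi-concave function which also has a linear lower bound with the same quadratic defect at an interior minimizing point of the difference is in fact pinned between two parallel paraboloids there, hence differentiable with a Lipschitz gradient along the minimizing set. Concretely, write $g = \varphi_1 - \chphi_2$ and let $S = \argmin_x g$. Since $\varphi_1$ is $C$ semi-concave, at any $x_0 \in S$ there is a subdifferential $l_{x_0}$ with $\varphi_1(y) - \varphi_1(x_0) \le l_{x_0}(y-x_0) + C\|y-x_0\|^2$; since $-\chphi_2$ is $C$ semi-concave, there is $\tilde l_{x_0}$ with $-\chphi_2(y) + \chphi_2(x_0) \le \tilde l_{x_0}(y - x_0) + C\|y-x_0\|^2$, i.e. $\chphi_2(y) - \chphi_2(x_0) \ge -\tilde l_{x_0}(y-x_0) - C\|y-x_0\|^2$. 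Adding, $g(y) - g(x_0) \le (l_{x_0} + \tilde l_{x_0})(y-x_0) + 2C\|y-x_0\|^2$, but $g(y) - g(x_0) \ge 0$ for all $y$ because $x_0$ is a global minimizer of $g$. Hence the affine-plus-quadratic upper bound is nonnegative everywhere, which forces $l_{x_0} + \tilde l_{x_0} = 0$ (otherwise move a tiny step in the direction $-(l_{x_0}+\tilde l_{x_0})$ to make it negative). So $l_{x_0} = -\tilde l_{x_0} =: p_{x_0}$, and we get the two-sided bound
\[
	-C\|y - x_0\|^2 \le \varphi_1(y) - \varphi_1(x_0) - p_{x_0}(y-x_0) \le C\|y-x_0\|^2,
\]
with the same inequalities (same $p_{x_0}$) for $\chphi_2$. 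The two-sided bound immediately gives differentiability of $\varphi_1$ and of $\chphi_2$ at $x_0$ with $\nabla\varphi_1(x_0) = \nabla\chpsi_2(x_0) = p_{x_0}$, which is the first two assertions (here I read $\nabla\psi_i$ in the statement as $\nabla\varphi_i$ on this set, and similarly $\chpsi_2$; the claim is that the subdifferentials of $\varphi_1$ and $\chphi_2$ coincide on $S$).

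For the Lipschitz estimate on $S$, take $x_0, x_1 \in S$ with associated $p_0 = \nabla\varphi_1(x_0)$, $p_1 = \nabla\varphi_1(x_1)$. Apply the upper bound at $x_0$ with $y = x_1$ and the lower bound at $x_1$ with $y = x_0$:
\[
	\varphi_1(x_1) - \varphi_1(x_0) \le p_0\cdot(x_1 - x_0) + C\|x_1 - x_0\|^2,
\]
\[
	\varphi_1(x_0) - \varphi_1(x_1) \le -p_1\cdot(x_1 - x_0) + C\|x_1-x_0\|^2.
\]
Adding gives $(p_1 - p_0)\cdot(x_1 - x_0) \le 2C\|x_1 - x_0\|^2$. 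This is only a one-sided (monotone-type) inequality and by itself does not bound $\|p_1 - p_0\|$; this is the one genuinely delicate point. The fix is to use the full two-sided paraboloid bound at $x_0$ (with some slack point) together with semi-concavity again: for an arbitrary point $z = x_0 + t u$, $\|u\| = 1$, combine $\varphi_1(z) \ge \varphi_1(x_0) + p_0\cdot(z-x_0) - C t^2$ with the upper bound at $x_1$, $\varphi_1(z) \le \varphi_1(x_1) + p_1\cdot(z - x_1) + C\|z - x_1\|^2$, and with $\varphi_1(x_1) \le \varphi_1(x_0) + p_0\cdot(x_1-x_0) + C\|x_1-x_0\|^2$ to eliminate $\varphi_1$-values; choosing $t$ comparable to $\|x_1 - x_0\|$ and $u$ in the direction of $p_1 - p_0$ (lifted suitably) turns the resulting inequality into $\|p_1 - p_0\| \le 6C\|x_1 - x_0\|$ after collecting the quadratic terms — the constant $6$ is exactly what one gets tracking $2C$ from $\varphi_1$ itself, $2C$ from $\chphi_2$, and the cross term from optimizing over $t$. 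I would organize this as a short self-contained lemma: if $h$ satisfies $|h(y) - h(x_0) - p_0\cdot(y-x_0)| \le C\|y - x_0\|^2$ for all $y$ and likewise at $x_1$ with $p_1$, then $\|p_1 - p_0\| \le 6C\|x_1 - x_0\|$.

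The main obstacle is precisely upgrading the monotone inequality $(p_1-p_0)\cdot(x_1-x_0)\le 2C\|x_1-x_0\|^2$ to a genuine Lipschitz bound on $p$; the key is that on $S$ we have a two-sided quadratic control, not merely one-sided semi-concavity, and one must use it at an auxiliary point displaced in the direction of $p_1 - p_0$ rather than only comparing $x_0$ and $x_1$. Everything else — differentiability from the squeeze, equality of subdifferentials — is routine. Since the statement is quoted as coming from \cite{fat08}, I would in practice just cite it; the sketch above is the argument behind it.
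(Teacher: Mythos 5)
The paper does not give its own proof of this lemma --- it cites Fathi's book --- so there is nothing to compare against line by line; but your sketch is essentially the standard argument and it is correct where it matters. The reduction to a two-sided quadratic bound at each minimizing point (by adding the two semi-concavity inequalities, using nonnegativity of $g$ to force $l_{x_0} + \tilde l_{x_0} = 0$, then reading off a parabolic sandwich for $\varphi_1$ and for $\chphi_2$) is exactly the right first step and yields differentiability and equality of gradients. You also correctly read the $\nabla\psi_i$ in the statement as a typo for $\nabla\varphi_1, \nabla\chphi_2$.

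One small caution about the Lipschitz step. The particular three-inequality combination you write out in the middle --- lower bound at $x_0$ for $z$, upper bound at $x_1$ for $z$, upper bound at $x_0$ for $x_1$, then $z = x_0 + tu$ with $u$ in the direction of $p_1 - p_0$ --- does \emph{not} produce the constant $6C$; if you push the algebra through (e.g. with $t = 2\|x_1 - x_0\|$) you get something like $14C$, and even optimizing $t$ gives about $(6+4\sqrt3)C$. The correct combination is to displace from $x_1$ rather than $x_0$: take $z = x_1 + tu$ with $u = (p_1-p_0)/\|p_1 - p_0\|$, use the lower bound at $x_1$ for $z$, the upper bound at $x_0$ for $z$, and the lower bound at $x_0$ for $x_1$. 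Then with $t = \|x_1 - x_0\|$ one gets $\|p_1 - p_0\| \le (2 + 2 + 2)C\|x_1 - x_0\| = 6C\|x_1 - x_0\|$, which is exactly the self-contained lemma you propose at the end. So the ``self-contained lemma'' you state is true and gives the claimed constant, but the displayed combination before it would have to be fixed to prove it. Since the overall logic (two-sided parabolic bounds at each point plus a displacement in the direction of the gradient difference) is correct, and the paper is in any case just citing Fathi, this is a presentational slip rather than a genuine gap.
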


Let $K_0(\omega)=\|F_0^\omega\|_{C^{2+\alpha}}+1$ and $K(\omega) = 2\sqrt{d}(K_0(\omega)+1)$. The action function $A_{m,n}^{\omega,b}$ has the following properties. 

\begin{lemma}[\cite{KZ12}, Lemma 3.2]\label{lem:prop-act}
\begin{enumerate}
	\item The function $A_{m,n}^{\omega,b}(x, x')$ is $1-$semi-concave in the second component, and is $K(\theta^m\omega)-$semi-concave in the first component. Here $\theta: \Omega \to \Omega$ is the time-shift, see \eqref{eq:time-shift}. 
	\item For any $\varphi \in C(\T^d)$, and $m < n \in \Z$, the function $K_{m,n}^{\omega, b}\varphi(x)$ is $1$ semi-concave, and $-\chK_{m,n}^{\omega, b}\varphi(x)$ is $K_0(\theta^m\omega)$ semi-concave. Either function, as well as the sum of the two functions, are $K(\theta^m\omega)$ Lipshitz. 
	\item For $n \in \Z$, the functions $\psi^-_\omega(\cdot, n)$ is $1$ semi-concave, and $-\psi^+_\omega(\cdot, n)$ is $K_0(\theta^n \omega)$ semi-concave,  Either function, as well as the sum of the two functions, are $K(\theta^m\omega)$ Lipshitz.  
\end{enumerate}
\end{lemma}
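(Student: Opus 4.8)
\emph{Proof proposal.} The plan is to reduce all three parts to a single perturbation estimate for the action function, together with the elementary fact that an infimum (resp.\ supremum) of a family of uniformly $C$-semi-concave functions is again $C$-semi-concave. For part (1), fix $x$ and estimate $A_{m,n}^{\omega,b}(x,\cdot)$ near a point $x'$. Because the force is kicked, a minimizing curve $\zeta$ joining $(m,x)$ to $(n,x')$ is piecewise affine, in particular affine on $[n-1,n]$, and no kick acts on the open interval $(n-1,n)$ (the kick at time $n-1$ is evaluated at $\zeta(n-1)$, and there is no kick at $n$). For a nearby $y'$, replace $\zeta$ on $[n-1,n]$ by the affine segment from $\zeta(n-1)$ to $y'$ and keep it unchanged on $[m,n-1]$; all kicks are then evaluated at the same points, so $A_{m,n}^{\omega,b}(x,y')-A_{m,n}^{\omega,b}(x,x')$ is bounded above by the difference of kinetic terms over $[n-1,n]$, which as a function of the endpoint is a quadratic with Hessian $I$. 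This gives $1$-semi-concavity in the second variable, with the subdifferential equal to (a suitable linear form built from) the one-sided momentum of $\zeta$ at time $n$. For the first variable, fix $x'$, take a minimizer $\zeta$ from $(m,y)$ to $(n,x')$, and perturb on the \emph{first} unit interval $[m,m+1]$, which now also changes the kick $F_m^\omega(\zeta(m))$: the kinetic part contributes a $\tfrac12$-semi-concave term as before, while $-F_m^\omega$ is $\tfrac12\|F_m^\omega\|_{C^2}$-semi-concave; since $F_m^\omega=F_0^{\theta^m\omega}$ by \eqref{eq:time-shift} and $K_0(\theta^m\omega)=\|F_0^{\theta^m\omega}\|_{C^{2+\alpha}}+1\ge 1$, the first-variable semi-concavity constant is at most $K_0(\theta^m\omega)\le K(\theta^m\omega)$. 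We record the sharper bound $K_0(\theta^m\omega)$ for later use.

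For part (2), I would use the following: if $\{f_\alpha\}$ are $C$-semi-concave and $f=\inf_\alpha f_\alpha$ is finite and attained at every point, then $f$ is $C$-semi-concave, because at $x$ one may pick $\alpha$ with $f_\alpha(x)=f(x)$ and use a subdifferential of $f_\alpha$ at $x$ for $f$. Since $\T^d$ is compact and $A_{m,n}^{\omega,b}$ is continuous, the infima/suprema in \eqref{eq:lax-bk} and in the definition of $\chK_{m,n}^{\omega,b}$ are attained. Then $K_{m,n}^{\omega,b}\varphi=\inf_y\{\varphi(y)+A_{m,n}^{\omega,b}(y,\cdot)\}$ is $1$-semi-concave by part (1), while $-\chK_{m,n}^{\omega,b}\varphi=\inf_y\{-\varphi(y)+A_{m,n}^{\omega,b}(\cdot,y)\}$ is $K_0(\theta^m\omega)$-semi-concave by the sharp first-variable bound from part (1).

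For part (3), the stationary solutions are fixed points of the Lax--Oleinik operators: $\psi^-_\omega(\cdot,n)=K_{n-1,n}^{\omega,b}\psi^-_\omega(\cdot,n-1)$ with $\psi^-_\omega(\cdot,n-1)\in C(\T^d)$ (Proposition~\ref{prop:backward-min}), so $\psi^-_\omega(\cdot,n)$ is $1$-semi-concave by part (2); likewise $\psi^+_\omega(\cdot,n)=\chK_{n,n+1}^{\omega,b}\psi^+_\omega(\cdot,n+1)$ gives that $-\psi^+_\omega(\cdot,n)$ is $K_0(\theta^n\omega)$-semi-concave. The Lipschitz bounds then follow from Lemma~\ref{lem:Lipshitz}: a $1$-semi-concave function is $2\sqrt d$-Lipschitz and a $K_0(\theta^m\omega)$-semi-concave function is $2\sqrt d K_0(\theta^m\omega)$-Lipschitz, both of which are $\le K(\theta^m\omega)=2\sqrt d(K_0(\theta^m\omega)+1)$ since $K_0\ge 1$. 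For the sum of the two functions, $K_{m,n}^{\omega,b}\varphi+(-\chK_{m,n}^{\omega,b}\varphi)$ is a sum of a $1$-semi-concave and a $K_0(\theta^m\omega)$-semi-concave function, hence $(1+K_0(\theta^m\omega))$-semi-concave, hence $2\sqrt d(1+K_0(\theta^m\omega))=K(\theta^m\omega)$-Lipschitz; the same computation applied to $\psi^-_\omega(\cdot,n)-\psi^+_\omega(\cdot,n)=Q^\infty_\omega(\cdot,n)$ yields the statement in part (3) with $\theta^n\omega$ in place of $\theta^m\omega$.

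The main point requiring care — rather than a genuine obstacle — is the bookkeeping of constants: one must keep the \emph{sharp} constant $K_0(\theta^m\omega)$ (not the inflated $K(\theta^m\omega)$) on the $\chK$ side of part (1), so that after adding the constant $1$ coming from the $K_{m,n}^{\omega,b}\varphi$ side and invoking Lemma~\ref{lem:Lipshitz} the Lipschitz constant of the sum still fits under $K(\theta^m\omega)$. This is exactly why $K(\theta^m\omega)$ is defined as $2\sqrt d(K_0(\theta^m\omega)+1)$. The only other thing to verify is that minimizers realizing $A_{m,n}^{\omega,b}$ exist, so that the perturbation argument in part (1) is legitimate; this is immediate here since minimizers are piecewise affine and the infimum \eqref{eq:action} is attained.
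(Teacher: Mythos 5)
The paper does not prove this lemma; it is cited verbatim from \cite{KZ12} (Lemma 3.2), so there is no ``paper's own proof'' to compare against. Evaluating your argument on its own merits: it is correct and it is the standard argument one would give.

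Your part~(1) is the right mechanism for kicked systems: perturb a minimizing broken geodesic only on the terminal unit interval $[n-1,n]$ (for the second variable) or on the initial unit interval $[m,m+1]$ (for the first variable), use that no kick acts on the open interior of a unit interval, and read off the semi-concavity constant from the Hessian of the unit-time kinetic quadratic plus, on the initial interval, the Hessian of $-F^\omega_m$. A small quantitative remark: the kinetic contribution is in fact $\tfrac12$-semi-concave under the paper's convention (constant multiplies $\|y-x\|^2$, not $\tfrac12\|y-x\|^2$), and $-F^\omega_m$ is $\tfrac12\|F^\omega_m\|_{C^2}$-semi-concave, so your first-variable constant is really $\tfrac12 K_0(\theta^m\omega)$; you only claim $K_0(\theta^m\omega)$, which is what is needed, so no harm done. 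You are also right that one must carry the sharp $K_0$-constant (rather than the inflated $K$) through parts (1)--(2) for the sum to come out $K(\theta^m\omega)$-Lipschitz under Lemma~\ref{lem:Lipshitz}; this is precisely why $K=2\sqrt d(K_0+1)$ is defined as it is, and your accounting lands exactly there. Parts (2) and (3) via ``attained infimum of $C$-semi-concave functions is $C$-semi-concave'' and the fixed-point identities $\psi^-_\omega(\cdot,n)=K^{\omega,b}_{n-1,n}\psi^-_\omega(\cdot,n-1)$, $\psi^+_\omega(\cdot,n)=\chK^{\omega,b}_{n,n+1}\psi^+_\omega(\cdot,n+1)$ are exactly right. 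The one implicit step worth naming explicitly, if you were to write this up, is the passage from $\T^d$ to $\R^d$: the perturbation argument is local, so for $y'$ near $x'$ one works with a fixed lift of the minimizing broken geodesic, which keeps the identification of the semi-concavity constant with the local Hessian legitimate.
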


We first state two lemmas concerning the properties of the Lax-Oleinik semigroup, the goal is to obtain Lemma~\ref{lem:lipsht-est}, which is a version of Mather's graph theorem (\cite{Mather93}). 
\begin{lemma}\label{lem:ineq-forward-backward}
For any $x \in \T^d$, $m  < n$, $\varphi \in C(\T^d)$, we have 
\[
	\chK_{m,n}^{\omega,b}\left(   K_{m,n}^{\omega,b}\varphi \right)(x) \le \varphi(x)
\]
\end{lemma}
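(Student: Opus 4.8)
The plan is to unwind both operators through the variational definitions and exploit the fact that an infimum of a sum dominates the sum of separate infima only in one direction, which is exactly the direction we want. Write $\psi = K_{m,n}^{\omega,b}\varphi$, so that by \eqref{eq:lax-bk},
\[
	\psi(z) = \min_{y \in \T^d} \left\{ \varphi(y) + A_{m,n}^{\omega,b}(y, z) \right\} \le \varphi(w) + A_{m,n}^{\omega,b}(w, z)
\]
for every $w, z \in \T^d$. Now apply the forward operator to $\psi$ at the point $x$:
\[
	\chK_{m,n}^{\omega,b}\psi(x) = \sup_{z \in \T^d} \left\{ \psi(z) - A_{m,n}^{\omega,b}(x, z) \right\}.
\]

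The key step is to bound the bracket for each fixed $z$. Using the displayed inequality for $\psi(z)$ with the particular choice $w = x$, we get
\[
	\psi(z) - A_{m,n}^{\omega,b}(x, z) \le \varphi(x) + A_{m,n}^{\omega,b}(x, z) - A_{m,n}^{\omega,b}(x, z) = \varphi(x).
\]
Since this holds for every $z \in \T^d$, taking the supremum over $z$ yields $\chK_{m,n}^{\omega,b}\psi(x) \le \varphi(x)$, which is the claim. Note that the minimum over $y$ in $\psi(z)$ and the supremum over $z$ in $\chK_{m,n}^{\omega,b}\psi(x)$ are both attained because $A_{m,n}^{\omega,b}$ is continuous (indeed Lipschitz) on the compact torus and $\varphi, \psi$ are continuous, so there is no subtlety about whether the extrema exist; but the argument does not even require attainment, only the two one-sided comparisons above.

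There is essentially no obstacle here: the entire content is that evaluating the inner minimum in the definition of $\psi$ at the ``diagonal'' choice $y = x$ exactly cancels the action term subtracted by $\chK_{m,n}^{\omega,b}$. The only things to be careful about are (i) that the same action function $A_{m,n}^{\omega,b}$ appears in both operators with arguments in the consistent order — $A(y,z)$ inside $K$ and $A(x,z)$ inside $\chK$ — which is the case by the definitions \eqref{eq:lax-bk} and the display defining $\chK_{s,t}^{\omega,b}$, and (ii) lifting to $\R^d$ if one wants to treat the torus-valued curves in the action carefully, which is routine. The inequality is strict only in degenerate situations and we do not need equality; the one-sided bound is all that is used downstream. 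This lemma is the discrete-time, kicked-force analogue of the standard fact that $\chK \circ K \le \mathrm{id}$ in weak KAM theory, reflecting that the backward and forward Lax-Oleinik semigroups are adjoint in the appropriate order-theoretic sense.
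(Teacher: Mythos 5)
Your proof is correct and is essentially the same argument as the paper's: you bound $K_{m,n}^{\omega,b}\varphi(z)$ from above by testing the minimum defining it at $y=x$, which cancels the action term subtracted by $\chK_{m,n}^{\omega,b}$, and then take the supremum over $z$. The paper's proof is identical in substance, just more terse.
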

\begin{proof}
	For any $x,y \in \T^d$, 
	\[
		K_{m,n}^{\omega,b}\varphi(y) \le \varphi(x) + A_{m,n}^{\omega,b}(x,y),
	\]
	then
	\[
		\chK_{m,n}^{\omega,b}\left(  K_{m,n}^{\omega,b}\varphi \right)(x) = \max_{y\in \T^d}\left\{  K_{m,n}^{\omega,b}(y) - A_{m,n}^{\omega,b}(x,y)  \right\} \le \varphi(x). 
	\]
\end{proof}
\begin{lemma}\label{lem:forward-backward}
Suppose $m < n $. Let $(x_j, v_j)_{j=m}^n$ be a minimizer for $K_{m,n}^{\omega,b}\varphi$ in the sense of \eqref{eq:finite-min}. Then for each $m\le j \le n$, we have 
\[
	K_{m,j}^{\omega,j}\varphi(x) \ge \chK_{j,n}^{\omega,b}\left(  K_{m,n}^{\omega,b}\varphi \right)(x), \quad \forall x \in \T^d,
\]
and 
\[
	K_{m,j}^{\omega,j}\varphi(x_j) = \chK_{j,n}^{\omega,b}\left(  K_{m,n}^{\omega,b}\varphi \right)(x_j).	
\]
\end{lemma}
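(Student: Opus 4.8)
The plan is to exploit the semigroup property of the Lax-Oleinik operators together with Lemma~\ref{lem:ineq-forward-backward}, which already gives $\chK_{m,n}^{\omega,b}(K_{m,n}^{\omega,b}\varphi)(x)\le\varphi(x)$. First I would split the backward operator $K_{m,n}^{\omega,b}$ at the intermediate time $j$: by the semigroup property, $K_{m,n}^{\omega,b}\varphi = K_{j,n}^{\omega,b}(K_{m,j}^{\omega,b}\varphi)$. Applying $\chK_{j,n}^{\omega,b}$ to both sides and invoking Lemma~\ref{lem:ineq-forward-backward} (with $\varphi$ replaced by $K_{m,j}^{\omega,b}\varphi$ and the time interval $[m,n]$ replaced by $[j,n]$) yields immediately
\[
	\chK_{j,n}^{\omega,b}\bigl(K_{m,n}^{\omega,b}\varphi\bigr)(x)
	= \chK_{j,n}^{\omega,b}\bigl(K_{j,n}^{\omega,b}(K_{m,j}^{\omega,b}\varphi)\bigr)(x)
	\le K_{m,j}^{\omega,b}\varphi(x)
\]
for all $x\in\T^d$. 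This establishes the inequality. (I note the typo in the statement: the left side should read $K_{m,j}^{\omega,b}\varphi(x)$, i.e. the superscript $\omega,j$ is meant to be $\omega,b$.)

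For the equality at $x=x_j$, the idea is that the minimizer realizes all the relevant infima/suprema simultaneously, so the inequalities above become equalities along the optimal orbit. Concretely, let $(x_i,v_i)_{i=m}^n$ be a minimizer for $K_{m,n}^{\omega,b}\varphi$ as in \eqref{eq:finite-min}, so that $K_{m,n}^{\omega,b}\varphi(x_n) = \varphi(x_m) + A_{m,n}^{\omega,b}(x_m,x_n)$. By the additivity of the action along a minimizer, $A_{m,n}^{\omega,b}(x_m,x_n) = A_{m,j}^{\omega,b}(x_m,x_j) + A_{j,n}^{\omega,b}(x_j,x_n)$, and the truncated orbit $(x_i,v_i)_{i=m}^j$ is a minimizer for $K_{m,j}^{\omega,b}\varphi$ at $x_j$, giving $K_{m,j}^{\omega,b}\varphi(x_j) = \varphi(x_m) + A_{m,j}^{\omega,b}(x_m,x_j)$. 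On the other hand, from the definition of $\chK$,
\[
	\chK_{j,n}^{\omega,b}\bigl(K_{m,n}^{\omega,b}\varphi\bigr)(x_j)
	\ge K_{m,n}^{\omega,b}\varphi(x_n) - A_{j,n}^{\omega,b}(x_j,x_n)
	= \varphi(x_m) + A_{m,j}^{\omega,b}(x_m,x_j)
	= K_{m,j}^{\omega,b}\varphi(x_j),
\]
where the first step uses $x_n$ as the (not necessarily optimal) test point in the supremum defining $\chK_{j,n}^{\omega,b}$, and the middle equality substitutes the two action-splitting identities. Combined with the inequality from the first paragraph, this forces equality at $x_j$.

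The only genuine care needed is bookkeeping: making sure the truncation of a minimizer is again a minimizer (immediate from the definition of minimizer, since the defining optimality property is required on every subinterval), and that the action splits additively along it (this is \eqref{eq:action} together with the fact that concatenation of the two optimal sub-curves is admissible and the curve is linear between integer times). There is no real analytic obstacle here; the lemma is essentially a duality/complementary-slackness statement for the variational problem, and the main point is simply to choose the right test points ($x_n$ for the sup, the truncated orbit for the inf) so that the generic inequalities are saturated along the optimal trajectory.
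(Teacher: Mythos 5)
Your proposal is correct and follows essentially the same route as the paper: the inequality comes from the semigroup property $K_{m,n}^{\omega,b}=K_{j,n}^{\omega,b}K_{m,j}^{\omega,b}$ combined with Lemma~\ref{lem:ineq-forward-backward} applied on $[j,n]$, and the equality at $x_j$ comes from plugging the test point $x_n$ into the supremum defining $\chK_{j,n}^{\omega,b}$ and using that the action splits additively along the minimizer (equivalently, $K_{m,n}^{\omega,b}\varphi(x_n)=K_{m,j}^{\omega,b}\varphi(x_j)+A_{j,n}^{\omega,b}(x_j,x_n)$, which is what the paper writes directly). The only difference is cosmetic: you unpack through $\varphi(x_m)$ and the truncated minimizer rather than quoting the intermediate identity in one line, and you correctly flag the superscript typo $\omega,j$ for $\omega,b$ in the statement.
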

\begin{proof}
	By definition $K_{nn}^{\omega, b}\varphi = \varphi$, so the case $j = n$ is trivial. 
	For $m \le j < n$, we have 
	\[
		K_{m,n}^{\omega,b}\varphi(x_n) = K_{m,j}^{\omega, b}\varphi(x_j) + A_{j,n}^{\omega, b}(x_j, x_n).
	\]
	Then 
	\[
		K_{m,j}^{\omega, b}\varphi(x_j) = K_{m,n}^{\omega,b}\varphi(x_n)  - A_{m,n}^{\omega, b}(x_j, x_n) \le \chK_{j,n}^{\omega,b}\left(  K_{m,n}^{\omega,b}\varphi \right)(x_j).
	\]
	On the other hand, apply Lemma~\ref{lem:ineq-forward-backward} to $K_{m,j}^{\omega,b}\varphi$ and $j < n$ yields
	\[
		\chK_{j,n}^{\omega,b}\left(  K_{m,n}^{\omega,b}\varphi \right)(x) = \chK_{j,n}^{\omega,b}\left( K_{j,n}^{\omega,b} K_{m,j}^{\omega,b}\varphi \right)(x) \le K_{m,j}^{\omega,b} \varphi(x). 
	\]
	The lemma follows. 
\end{proof}

As a result, A minimizer of a backward solution is also a minimizer of the forward-backward solution. 
\begin{corollary}\label{cor:two-way-minimizer}
Let $(x_j, v_j)_{j=m}^n$ be a minimizer for $K_{m,n}^{\omega,b}\varphi$, then it is also a minimizer for $\chK_{m,n}^{\omega, b} \left( K_{m,n}^{\omega,b}\varphi \right)$. 
\end{corollary}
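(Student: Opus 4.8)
The plan is to verify directly that the given orbit realizes the supremum defining $\chK_{m,n}^{\omega,b}\bigl(K_{m,n}^{\omega,b}\varphi\bigr)$ at its left endpoint $x_m$; the action-minimality of the orbit on subintervals of $[m,n]$ is already part of the hypothesis and does not need to be re-established. Write $\psi = K_{m,n}^{\omega,b}\varphi$ and abbreviate $A = A_{m,n}^{\omega,b}$. By the forward analog of Lemma~\ref{lem:min}, saying that $(x_j,v_j)_{j=m}^n$ is a minimizer for $\chK_{m,n}^{\omega,b}\psi$ amounts to two things: the orbit is action-minimizing on $[m,n]$ (true by assumption, since it is already a minimizer for $K_{m,n}^{\omega,b}\varphi$), and the identity
\[
	\chK_{m,n}^{\omega,b}\psi(x_m) = \psi(x_n) - A(x_m, x_n)
\]
holds. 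So the whole proof reduces to this identity.

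The inequality $\chK_{m,n}^{\omega,b}\psi(x_m) \ge \psi(x_n) - A(x_m, x_n)$ is immediate from the definition of $\chK$ by taking $y = x_n$ in the supremum. For the reverse inequality, I would use that $(x_j,v_j)_{j=m}^n$ is a minimizer for $K_{m,n}^{\omega,b}\varphi$ in the sense of \eqref{eq:finite-min}, which gives $\psi(x_n) = \varphi(x_m) + A(x_m,x_n)$, hence $\psi(x_n) - A(x_m,x_n) = \varphi(x_m)$. Then Lemma~\ref{lem:forward-backward} applied with $j = m$ (or directly Lemma~\ref{lem:ineq-forward-backward}) yields $\chK_{m,n}^{\omega,b}\psi(x_m) \le \varphi(x_m)$. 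Chaining these gives $\chK_{m,n}^{\omega,b}\psi(x_m) \le \psi(x_n) - A(x_m,x_n)$, so equality holds and the orbit realizes the defining supremum of the forward operator at $x_m$.

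Combining the two properties — the orbit is action-minimizing on $[m,n]$, and it realizes the max in the forward Lax–Oleinik formula at its left endpoint — is exactly what it means to be a minimizer for $\chK_{m,n}^{\omega,b}\bigl(K_{m,n}^{\omega,b}\varphi\bigr)$, so the corollary follows. I do not expect a genuine obstacle here: the mathematical content is entirely carried by Lemmas~\ref{lem:ineq-forward-backward} and \ref{lem:forward-backward}, and the corollary is essentially the bookkeeping that reconciles the "realizes the extremum in the variational formula" viewpoint with the "action-minimizing curve" viewpoint of the word \emph{minimizer}. The only point requiring a little care is making sure the notion of minimizer for the composed operator $\chK_{m,n}^{\omega,b}\circ K_{m,n}^{\omega,b}$ is unwound correctly, i.e. that no additional minimality on subintervals beyond $[m,n]$ is being demanded.
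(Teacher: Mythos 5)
Your proof is correct and follows essentially the same route as the paper's: both rest on Lemma~\ref{lem:ineq-forward-backward} and Lemma~\ref{lem:forward-backward}, you verifying the defining identity at the left endpoint $j=m$ and invoking the hypothesized action-minimality separately, while the paper records the identity $\chK_{j,n}^{\omega,b}(K_{m,n}^{\omega,b}\varphi)(x_j) = K_{m,n}^{\omega,b}\varphi(x_n) - A_{j,n}^{\omega,b}(x_j,x_n)$ at every intermediate $j$. The two presentations are equivalent, since the intermediate-time identities follow from the endpoint one together with the orbit's action-minimality.
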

\begin{proof}
	Using the calculations in the proof of Lemma~\ref{lem:forward-backward}, we get 
	\[
		\chK_{j, n}^{\omega, b}\left(  K_{m,n}^{\omega,b}\varphi \right)(x_j) = K_{m, j}^{\omega, b} \varphi(x_j) = K_{m,n}^{\omega, b}\varphi(x_n) - A_{j, n}^{\omega, b}(x_j, x_n)
	\]
	for all $m \le j \le n$. The corollary follows. 
\end{proof}

The following lemma provides a Lipshitz estimate for the velocity of the minimizer in the interior of the time interval. 

\begin{lemma}\label{lem:lipsht-est}
Suppose $m < n $ with $n-m \ge 2$. Let $(x_j, v_j)_{j=m}^n$ and $(y_j, \eta_j)_{j=m}^n$ be two minimizers for $K_{m,n}^{\omega,b}\varphi$ in the sense of \eqref{eq:finite-min}. Then for all $m< j < n$, we have
\[
	\| v_j - \eta_j\| \le K(\theta^j \omega) \|x_j - y_j\|.
\]
The same conclusion hold, if $(x_j, v_j)$ and $(y_j, \eta_j)$ are minimizers for $\chK_{m,n}^{\omega,b}\varphi(x)$. 
\end{lemma}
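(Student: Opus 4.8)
The plan is to identify the interior velocities of a minimizer with the gradient of the associated backward Lax--Oleinik solution, and then to prove a Lipschitz bound for that gradient; I carry this out for $K^{\omega,b}_{m,n}$, the statement for $\chK^{\omega,b}_{m,n}$ following from the time‑reversed argument. Fix $m<j<n$ and set $\phi^-:=K^{\omega,b}_{m,j}\varphi$ and $\phi^+:=\chK^{\omega,b}_{j,n}\bigl(K^{\omega,b}_{m,n}\varphi\bigr)$. Arguing exactly as in the proof of Lemma~\ref{lem:forward-backward} (additivity of the action along a minimizer), the restrictions $(x_i,v_i)_{i=m}^{j}$ and $(y_i,\eta_i)_{i=m}^{j}$ are minimizers for $\phi^-$ at $x_j$ and $y_j$ in the sense of \eqref{eq:finite-min}. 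Applying Lemma~\ref{lem:forward-backward} to each of the two minimizers — $\phi^+$ is the same for both — one gets $\phi^-\ge\phi^+$ on $\T^d$ with $\phi^-(x_j)=\phi^+(x_j)$ and $\phi^-(y_j)=\phi^+(y_j)$, so $x_j,y_j\in\argmin_z\{\phi^-(z)-\phi^+(z)\}$. Since $\phi^-$ is $1$‑semi-concave and $-\phi^+$ is $K_0(\theta^j\omega)$‑semi-concave (Lemma~\ref{lem:prop-act}(2), applied on $[m,j]$ and on $[j,n]$), Lemma~\ref{lem:grad-lip} shows $\phi^-$ is differentiable on this argmin set; hence by Lemma~\ref{lem:min}(1), $v_j=\nabla\phi^-(x_j)+b$ and $\eta_j=\nabla\phi^-(y_j)+b$, and it remains to estimate $\|\nabla\phi^-(x_j)-\nabla\phi^-(y_j)\|$.

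The constant emerging straight from Lemma~\ref{lem:grad-lip} is $6K_0(\theta^j\omega)$, which is too large in low dimension; the point is to absorb the kick at time $j$. Put $\tilde\phi^+:=\phi^+-F^\omega_j$. Unwinding the definition of $\chK^{\omega,b}_{j,n}$ and isolating the kick at $j$, one sees that $\tilde\phi^+$ is the image of the time‑$(j{+}1)$ forward solution $\chK^{\omega,b}_{j+1,n}(K^{\omega,b}_{m,n}\varphi)$ under free forward transport over a unit time interval; in particular $\tilde\phi^+$ is $\tfrac12$‑semi-convex, with a constant independent of $F^\omega_j$. Consequently $u:=\phi^--F^\omega_j$ is $\tfrac{K_0(\theta^j\omega)+1}{2}$‑semi-concave (sum of the $1$‑semi-concave $\phi^-$ and of $-F^\omega_j$, whose Hessian is bounded by $\|F^\omega_j\|_{C^2}\le K_0(\theta^j\omega)-1$), and $u\ge\tilde\phi^+$ on $\T^d$ with equality at $x_j,y_j$, where (Lemma~\ref{lem:grad-lip} applied to $u,\tilde\phi^+$, whose argmin set equals that of $\phi^--\phi^+$) they are differentiable with $\nabla u=\nabla\tilde\phi^+$.

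Now compare supporting paraboloids. Semi-concavity of $u$ at $x_j$ (supporting slope $\nabla u(x_j)$, opening $C_u=\tfrac{K_0(\theta^j\omega)+1}{2}$) and semi-convexity of $\tilde\phi^+$ at $y_j$ (supporting slope $\nabla u(y_j)$, opening $C_\ell=\tfrac12$), together with $\tilde\phi^+\le u$ and the two touching conditions, give a globally nonnegative quadratic $\Psi(z)$ with leading term $(C_u+C_\ell)\mathrm{Id}$; evaluating $\Psi$ at $x_j$, bounding $\Psi(x_j)\le(C_u+C_\ell)\|x_j-y_j\|^2$ by semi-concavity of $u$ at $y_j$, and comparing with its value at the vertex yields $\|2C_\ell(x_j-y_j)+(\nabla u(x_j)-\nabla u(y_j))\|\le 2(C_u+C_\ell)\|x_j-y_j\|$, hence $\|\nabla u(x_j)-\nabla u(y_j)\|\le(2C_u+4C_\ell)\|x_j-y_j\|=(K_0(\theta^j\omega)+3)\|x_j-y_j\|$. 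Since $\nabla\phi^-=\nabla u+\nabla F^\omega_j$ and $\|\nabla F^\omega_j(x_j)-\nabla F^\omega_j(y_j)\|\le(K_0(\theta^j\omega)-1)\|x_j-y_j\|$,
\[
\|v_j-\eta_j\|=\|\nabla\phi^-(x_j)-\nabla\phi^-(y_j)\|\le\bigl(2K_0(\theta^j\omega)+2\bigr)\|x_j-y_j\|\le K(\theta^j\omega)\,\|x_j-y_j\|,
\]
using $2(K_0+1)\le 2\sqrt d\,(K_0+1)=K$. The same computation with $u,\tilde\phi^+$ replaced by their analogues built from $\chK^{\omega,b}_{m,n}\varphi$ (i.e. after reversing time) settles the forward case.

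The main obstacle is precisely the Lipschitz constant: abstract semi-concave/semi-convex bracketing controls only the component of $\nabla\phi^-(x_j)-\nabla\phi^-(y_j)$ along $x_j-y_j$ with the sharp constant, and the naive bound on the full vector carries the factor $6$ from Lemma~\ref{lem:grad-lip}. Reaching the stated constant $K(\theta^j\omega)$ requires both the kick‑absorption step — which replaces the semi-convexity constant $K_0(\theta^j\omega)$ of $\phi^+$ by the free‑transport value $\tfrac12$ — and the explicit vertex computation above; everything else (additivity of the action, differentiability on the argmin set, the quadratic comparison) is routine.
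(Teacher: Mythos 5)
Your setup is identical to the paper's: put $\phi^-=K^{\omega,b}_{m,j}\varphi$ and $\phi^+=\chK^{\omega,b}_{j,n}\bigl(K^{\omega,b}_{m,n}\varphi\bigr)$, use Lemma~\ref{lem:forward-backward} (applied to each minimizer, with the same $\phi^+$) to place $x_j,y_j\in\argmin\{\phi^--\phi^+\}$, then invoke the semi-concave calculus. The paper stops there: it cites Lemma~\ref{lem:grad-lip} with $\psi_1=\phi^-$, $\psi_2=\phi^+$ and simply writes $K(\theta^j\omega)$ for the resulting Lipschitz constant. As you correctly observe, the constant in Lemma~\ref{lem:grad-lip} as stated is $6C$ with $C=K_0(\theta^j\omega)$, and $6K_0$ exceeds $K=2\sqrt d\,(K_0+1)$ in low dimension; the paper is being sloppy here (only temperedness of the constant is used downstream, so this is cosmetic rather than fatal, but the inequality as printed does not follow from the cited lemma for small $d$). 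Your continuation — subtracting the kick $F^\omega_j$ so that $\tilde\phi^+=\phi^+-F^\omega_j$ becomes a free-transport forward solution hence $\tfrac12$-semi-convex, then running the explicit supporting-paraboloid/vertex computation to get $\|\nabla u(x_j)-\nabla u(y_j)\|\le(2C_u+4C_\ell)\|x_j-y_j\|$ and finally $\|v_j-\eta_j\|\le 2(K_0+1)\|x_j-y_j\|\le K(\theta^j\omega)\|x_j-y_j\|$ — is correct at every step (the decomposition $A_{j,n}(x,y)=-F_j^\omega(x)+\hat A_{j,n}(x,y)$, the $\tfrac12$-semi-convexity of $\tilde\phi^+$ as an inf of $\tfrac12$-semi-concave quadratics in $x$, and the quadratic comparison $\Psi=P_u-P_\ell\ge 0$ all check out). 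So your proof establishes the lemma with the literal stated constant, where the paper's one-liner does not. This precision is not needed elsewhere in the paper (any tempered multiple of $K_0$ would do), so the extra machinery is overkill for the application, but it is a genuine and correct refinement of the authors' argument.
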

\begin{proof}
	We apply Lemma~\ref{lem:forward-backward} to $(x_j, v_j)$ and $(y_j, \eta_j)$. Denote $\psi_1 = K_{m,j}^{\omega,j}\varphi$ and $\psi_2 = \chK_{j,n}^{\omega,b}\left(  K_{m,n}^{\omega,b}\varphi \right)$, since $j-m, n-j \ge 1$,  $\psi_1$ is $1$ semi-concave,  $-\psi_2$ is $K(\theta^j \omega)$ semi-concave. Since $x_j, y_j \in \argmin_x\{\psi_1(x) - \psi_2(x)\}$,  Lemma~\ref{lem:grad-lip} and Lemma~\ref{lem:min} implies
	\[
		\|v_j - \eta_j\| = \| \nabla \psi_1(x_j) - \nabla \psi_1(y_j) \| \le K(\theta^j \omega) \|x_j - y_j\|. 
	\]
\end{proof}

\subsection{Properties of the stationary solutions}

Recall that 
\[
	Q^\infty_\omega(x, n)  = \psi^-_\omega(x,n) - \psi^+_\omega(x,n),
\]
which takes its minimum at the global minimizer $x_n^\omega$. To simplify notations, we will drop the subscript $\omega$ from these functions when there is no confusion. 

This function $Q^\infty$ is very useful, as it can be used to measure the distance to the global minimizer. For all $ \|y - x_0^\omega\| < r(F)$, we have 
\begin{equation}
	\label{eq:metric}
	a(\omega) \|y - x_0^\omega\|^2 \le Q^\infty(x, 0) - Q^\infty(x_0^\omega, 0) \le K(\omega) \|y - x_0^\omega\|^2.
\end{equation}
Moreover, $Q^\omega$ is a Lyapunov function for infinite backward minimizers. Namely, if $(y_0, \eta_0) = (y_0, \nabla \psi^-_\omega(y_0, 0))$ is a backward minimizer, then for any $j<k\le 0$, we have 
\begin{equation}
	\label{eq:inf-lya}
	Q^\infty(y_j, j) - Q^\infty(x_j^\omega, j) \le Q^\infty(y_k, k) - Q^\infty(x_k^\omega, k).
\end{equation}
(See \cite{KZ12}, Lemma 7.2)

Let us also recall, for any $\lambda' < \lambda$, there exists functions $r(\omega)$, $C(\omega)>0$ such that for all backward minimizers $(y_n, \eta_n)_{n \le 0}$ such that 
\begin{equation}
	\label{eq:exp-backward}
	\|y_n - x_n^\omega\| \le C(\omega) \exp (- \lambda' |n|), \quad \text{ for } \|y_0 - x_0^\omega\|< r(\omega) \text{ and } n \le 0. 
\end{equation}
We will also use a process in non-uniform hyperbolicity known as tempering. 
\begin{lemma}[\cite{BP07}, Lemma 3.5.7]\label{lem:tempering}
Let $g(\omega)>1$ be a random variable satisfying $\bE(\log g(\omega)) < \infty$, then for any $\epsilon>0$, there exists $g^\epsilon(\omega)>g(\omega)$ such that 
\begin{equation}
  \label{eq:tem-kernel}
  	e^{-\epsilon} \le \frac{g^\epsilon(\omega)}{g^\epsilon(\theta \omega)} \le e^\epsilon.
\end{equation}
\end{lemma}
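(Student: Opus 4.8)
The plan is to reduce the statement to the Birkhoff ergodic theorem applied to the function $\log g$. First I would pass to the ergodic case: by the ergodic decomposition it suffices to prove the statement conditionally on each ergodic component, so without loss of generality $\theta$ is ergodic. Set $h = \log g \ge 0$ with $\E(h) < \infty$, and note that the desired bound \eqref{eq:tem-kernel} is equivalent, after taking logarithms, to finding $G^\epsilon(\omega) := \log g^\epsilon(\omega) \ge h(\omega)$ such that $|G^\epsilon(\omega) - G^\epsilon(\theta\omega)| \le \epsilon$ for a.e.\ $\omega$.

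The natural candidate is
\[
	G^\epsilon(\omega) = \sup_{n \ge 0} \left(  h(\theta^n \omega) - \epsilon n  \right).
\]
I would verify three things. (i) \emph{Finiteness a.e.}: since $\theta$ is ergodic and $\E(h)<\infty$, the Birkhoff theorem (or just the Borel--Cantelli argument that $\frac1n h(\theta^n\omega) \to 0$ a.e., which follows from $\E(h)<\infty$ and stationarity) gives that $h(\theta^n\omega) - \epsilon n \to -\infty$ a.e., so the supremum is attained and finite for a.e.\ $\omega$. (ii) \emph{Dominates $h$}: take $n=0$ in the supremum, so $G^\epsilon(\omega) \ge h(\omega)$, hence $g^\epsilon := e^{G^\epsilon} \ge g$. (iii) \emph{Slow variation}: this is the key identity. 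From the definition,
\[
	G^\epsilon(\omega) = \max\left\{  h(\omega), \; \epsilon + \sup_{n \ge 1}\left( h(\theta^n\omega) - \epsilon n\right)  \right\}
	= \max\left\{  h(\omega),\; \epsilon + \left( G^\epsilon(\theta\omega) - \epsilon + \epsilon \right) \right\}
\]
wait --- more carefully, $\sup_{n\ge 1}(h(\theta^n\omega) - \epsilon n) = \sup_{m \ge 0}(h(\theta^{m+1}\omega) - \epsilon(m+1)) = G^\epsilon(\theta\omega) - \epsilon$, so
\[
	G^\epsilon(\omega) = \max\left\{  h(\omega),\; G^\epsilon(\theta\omega)  \right\}.
\]
Since $h(\omega) \le G^\epsilon(\omega)$ always, one has $G^\epsilon(\omega) \ge G^\epsilon(\theta\omega)$, i.e.\ $G^\epsilon$ is non-increasing along orbits, and $G^\epsilon(\omega) - G^\epsilon(\theta\omega) = \max\{h(\omega) - G^\epsilon(\theta\omega), 0\}$. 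To get the upper bound $\epsilon$ I would instead use the original tail truncation: $G^\epsilon(\omega) = \max\{h(\omega), \epsilon + G^\epsilon(\theta\omega) - \epsilon\}$ is not quite it, so let me keep $G^\epsilon(\theta\omega) = \sup_{m\ge 0}(h(\theta^{m+1}\omega)-\epsilon m)$ and compare termwise: for each $n \ge 1$, $h(\theta^n\omega) - \epsilon n = (h(\theta^n\omega) - \epsilon(n-1)) - \epsilon \le G^\epsilon(\theta\omega) - \epsilon \le G^\epsilon(\theta\omega)$; combined with the $n=0$ term $h(\omega) \le G^\epsilon(\omega)$ this gives $G^\epsilon(\omega) = \max(h(\omega), G^\epsilon(\theta\omega))$ as above, and also $G^\epsilon(\theta\omega) \le G^\epsilon(\omega) \le \max(h(\omega), G^\epsilon(\theta\omega))$. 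The clean way: $G^\epsilon(\theta\omega) + \epsilon \ge \sup_{n\ge 1}(h(\theta^n\omega) - \epsilon n) + 2\epsilon \ge \sup_{n \ge 0}(h(\theta^n\omega)-\epsilon n) = G^\epsilon(\omega)$ once we also check the $n=0$ term is dominated, which holds because $h(\omega) \le G^\epsilon(\omega) \le G^\epsilon(\theta\omega) + \epsilon$ iteratively. So $0 \le G^\epsilon(\omega) - G^\epsilon(\theta\omega) \le \epsilon$, which is \eqref{eq:tem-kernel} after exponentiating. (If one wants the two-sided bound with strict inequality or wants $g^\epsilon > g$ strictly rather than $\ge$, replace $h$ by $h + \epsilon$ at the outset.)

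The main obstacle is purely the bookkeeping in step (iii): making the semigroup/subadditivity identity $G^\epsilon(\omega) = \max(h(\omega), G^\epsilon(\theta\omega))$ precise and extracting from it both inequalities $0 \le G^\epsilon(\omega) - G^\epsilon(\theta\omega) \le \epsilon$ cleanly. Everything else --- ergodic reduction, a.e.\ finiteness via $\frac1n h(\theta^n\omega)\to 0$, and domination of $h$ --- is routine. I would present it essentially as the argument above, which is the standard Barreira--Pesin proof; since the statement is quoted from \cite{BP07}, a brief self-contained version along these lines is all that is needed.
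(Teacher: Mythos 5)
Your candidate $G^\epsilon(\omega) = \sup_{n \ge 0}\bigl(h(\theta^n\omega) - \epsilon n\bigr)$ has a genuine gap: it controls only one side of the desired estimate. The correct functional identity for the one-sided sup is $G^\epsilon(\omega) = \max\bigl\{ h(\omega),\ G^\epsilon(\theta\omega) - \epsilon\bigr\}$ (the identity $\max\{h(\omega), G^\epsilon(\theta\omega)\}$ you wrote down is off by $\epsilon$, and so is the ``non-increasing along orbits'' conclusion drawn from it). This yields only $G^\epsilon(\omega) \ge G^\epsilon(\theta\omega) - \epsilon$, i.e.\ the lower bound $g^\epsilon(\omega)/g^\epsilon(\theta\omega) \ge e^{-\epsilon}$. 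The upper bound $G^\epsilon(\omega) \le G^\epsilon(\theta\omega) + \epsilon$ would require $h(\omega) \le G^\epsilon(\theta\omega) + \epsilon$, but $G^\epsilon(\theta\omega)$ is built entirely from $h(\theta\omega), h(\theta^2\omega),\dots$ and carries no information about $h(\omega)$; it fails if $h(\omega)$ is huge while $h(\theta^n\omega)$, $n\ge 1$, are all tiny. Your ``clean way'' sees the obstruction but is circular: to dominate the $n=0$ term you invoke precisely the inequality $G^\epsilon(\omega) \le G^\epsilon(\theta\omega) + \epsilon$ that you are trying to prove.

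The fix, and what the Barreira--Pesin proof actually does, is the \emph{two-sided} supremum
\[
	G^\epsilon(\omega) = \sup_{n \in \Z}\bigl(h(\theta^n\omega) - \epsilon|n|\bigr),
\]
legitimate here because $\theta$ is invertible (the shift on the bi-infinite sequence $\{F_j^\omega\}_{j\in\Z}$). Then $G^\epsilon(\theta\omega) = \sup_{m\in\Z}\bigl(h(\theta^m\omega) - \epsilon|m-1|\bigr)$, and the reverse triangle inequality $\bigl||m-1| - |m|\bigr| \le 1$ gives termwise $h(\theta^m\omega)-\epsilon|m|-\epsilon \le h(\theta^m\omega)-\epsilon|m-1| \le h(\theta^m\omega)-\epsilon|m|+\epsilon$, hence $G^\epsilon(\omega) - \epsilon \le G^\epsilon(\theta\omega) \le G^\epsilon(\omega) + \epsilon$ immediately, while the $n=0$ term still gives $G^\epsilon \ge h$. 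Finiteness a.e.\ follows from your Borel--Cantelli argument applied in both time directions, using only stationarity of $h\circ\theta^n$ under $\theta$ and $\theta^{-1}$ and $\bE h < \infty$, so the ergodic-decomposition reduction at the start is superfluous. Your closing remark about replacing $h$ by $h+\epsilon$ to secure the strict inequality $g^\epsilon > g$ is fine.
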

Let's call a random variable $g(\omega) > 0$ tempered if for any $\epsilon>0$, both $g, g^{-1}$ admits an upper bound satisfying \eqref{eq:tem-kernel}. Products and inverses of tempered random variables are still tempered. 

The random variables $a, K$ in \eqref{eq:metric} and $r, C$ in \eqref{eq:exp-backward} are tempered. 
\begin{lemma}\label{lem:tempered-arK}
For any $\epsilon > 0$, there exists random variables 
\[
	a^\epsilon(\omega) < a(\omega), \quad r^\epsilon(\omega) < r(\omega), \quad  K^\epsilon(\omega) >  K(\omega), \quad C^\epsilon(\omega) > C(\omega)
\]
such that 
\[
	e^{-\epsilon} \le \frac{a^\epsilon(\omega)}{a^\epsilon(\theta\omega)} 
	,  \frac{r^\epsilon(\omega)}{r^\epsilon(\theta\omega)},  \frac{K^\epsilon(\omega)}{K^\epsilon(\theta\omega)}, \frac{C^\epsilon(\omega)}{C^\omega(\theta \omega)} \le e^\epsilon. 
\]
\end{lemma}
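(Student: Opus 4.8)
The plan is to verify each of the four random variables $a$, $r$, $K$, $C$ satisfies the hypothesis of Lemma~\ref{lem:tempering}, and then to take the corresponding tempered majorant (or minorant) as the required $a^\epsilon$, $r^\epsilon$, $K^\epsilon$, $C^\epsilon$. The point is purely that each of these variables, or its reciprocal, is dominated by an integrable function of $\omega$ that depends only on a fixed finite block of the noise, so that $\log$ of that dominating variable is integrable; then Lemma~\ref{lem:tempering} applied to $g(\omega) = \max\{\,\text{dominating variable}(\omega),\, 2\}$ produces the tempered bound, and the defining properties $a^\epsilon < a$, etc., come from monotonicity in the construction of Lemma~\ref{lem:tempering}.

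The key steps, in order, are as follows. First I would handle $K(\omega)$: by definition $K(\omega) = 2\sqrt d (K_0(\omega)+1)$ with $K_0(\omega) = \|F_0^\omega\|_{C^{2+\alpha}}+1$, and under Assumption~5 one has $\bE(|\xi_0|) < \infty$, hence $\bE(K_0(\omega)) \le \const\cdot(1 + \bE(|\xi_0|)\sum_i \|F_i\|_{C^{2+\alpha}}) < \infty$, so in particular $\bE(\log K(\omega)) < \infty$ and $\bE(\log K(\omega)^{-1}) < \infty$ since $K(\omega) \ge 2\sqrt d$ is bounded below. Apply Lemma~\ref{lem:tempering} to $g = K$ to get $K^\epsilon \ge K^\epsilon(\theta^{\pm1}\omega)e^{-\epsilon}$ as wanted; note one must replace $K^\epsilon$ by $\max\{K^\epsilon, K\}$ to also guarantee $K^\epsilon > K$, which preserves the tempering ratio bound up to enlarging $\epsilon$ slightly, or more cleanly one applies the lemma at $\epsilon/2$ and absorbs. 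Second, for $a(\omega)$: Proposition~\ref{prop:hyp} gives $\bE(a(\omega)^{-1/2}) < C(F,\rho) < \infty$, so $\bE(\log a(\omega)^{-1}) = 2\bE(\log a(\omega)^{-1/2}) \le 2\bE(a(\omega)^{-1/2})<\infty$ using $\log t \le t$; for the other direction one notes $a(\omega) \le K(\omega)$ by \eqref{eq:metric} (taking any $y$ with $0<\|y-x_0^\omega\|<\min\{r,R\}$), so $\log a(\omega) \le \log K(\omega)$ is integrable from the first step. Hence $\bE(|\log a(\omega)|)<\infty$; apply Lemma~\ref{lem:tempering} to $g = a^{-1}$ (which is $>1$ after enlarging) to obtain a tempered upper bound for $a^{-1}$, equivalently a tempered lower bound $a^\epsilon < a$. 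Third, $r(\omega)$ and $C(\omega)$ from \eqref{eq:exp-backward}/Proposition~\ref{prop:unst-stab}: here the subtlety is that these come from non-uniform hyperbolic theory, but that is exactly the situation Lemma~3.5.7 of \cite{BP07} is built for — the relevant variables are tempered along almost every orbit by the Oseledets/Lyapunov machinery underlying Proposition~\ref{prop:unst-stab}. I would either cite that $\bE(\log C(\omega))<\infty$ and $\bE(\log r(\omega)^{-1})<\infty$ follow from the standard construction of stable/unstable manifold sizes (the manifold size is controlled by tempered kernels in the Oseledets splitting, whose logs are integrable by the integrability of $\log\|D\Phi_0^\omega\|^{\pm1}$, which in turn follows from $\bE(K_0(\omega))<\infty$), then apply Lemma~\ref{lem:tempering} to $g = C$ and to $g = r^{-1}$ respectively. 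Finally, I would remark that products and inverses of tempered variables are tempered, so no compatibility between the four is needed — they are chosen independently.

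The main obstacle is Step three: establishing $\bE(\log C(\omega)) < \infty$ and $\bE(\log r(\omega)^{-1}) < \infty$, i.e., that the stable/unstable manifold sizes and distortion constants from Proposition~\ref{prop:unst-stab} have integrable logarithm. This is not immediate from the bare statement of Proposition~\ref{prop:unst-stab} (which only asserts these are positive random variables), and requires either revisiting the construction in \cite{KZ12} to extract the integrability, or invoking a general fact: in the Pesin theory of the cocycle $D\Phi_0^\omega$ over $\hat\Phi$, with $\bE(\log^+\|D\Phi_0^\omega\|^{\pm 1}) < \infty$ — which holds here because $\|D\Phi_0^\omega\|$ is polynomially controlled by $\|\nabla^2 F_0^\omega\|$ and $\bE(K_0(\omega))<\infty$ — the Pesin block functions (hence the manifold sizes $r(\omega)$ and graph-distortion constants $C(\omega)$) can be taken with integrable logarithm. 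I would therefore insert a short lemma, or a pointer to the relevant estimate in \cite{KZ12}, asserting precisely $\bE(\log C(\omega)) + \bE(\log r(\omega)^{-1}) < \infty$, and then the rest is a mechanical four-fold application of Lemma~\ref{lem:tempering} together with the closure of tempered variables under products and inverses.
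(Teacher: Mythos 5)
Your treatment of $K$ and $a$ matches the paper exactly: for both, the observation is that Lemma~\ref{lem:tempering} applies directly because $\bE(\log K)<\infty$ (from integrability of $\|F_0^\omega\|_{C^{2+\alpha}}$ under Assumption~5) and $\bE(\log a^{-1})<\infty$ (from $\bE(a^{-1/2})<\infty$ in \eqref{eq:moment-a}). So far you and the paper agree.

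The gap is in your Step~3, and it is more than a ``short lemma or pointer.'' Two points. First, the paper's route for $r$ is not ``prove $\bE(\log r^{-1})<\infty$ and apply Lemma~\ref{lem:tempering}''; instead it traces the explicit formula for the phase-space radius $r_0$ from the proof of Theorem~6.1 in \cite{KZ12} (a monomial in $a$, $K$, $\kappa$, $\rho$, $\tilde C$, all of which are shown tempered there) and invokes closure of tempered variables under products and inverses. This avoids estimating $\bE(\log r^{-1})$ altogether, and your appeal to a generic Pesin-theory ``manifold sizes have integrable log'' fact would at best reproduce it after considerable bookkeeping. Second, and more substantively, your proposed general Pesin argument would yield a tempered \emph{phase-space} radius, whereas the random variable $r(\omega)$ in Proposition~\ref{prop:unst-stab} is a \emph{configuration-space} radius: the condition is $\|y - x_0^\omega\| < r(\omega)$, with the momentum coordinate constrained a priori to lie on the graph of $\nabla\psi^\pm_\omega$. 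Converting the phase-space radius to a configuration-space one requires (i) the Lipschitz-gradient estimate $\|\eta_{-1}-v_{-1}^\omega\|\le K(\theta^{-1}\omega)\|y_{-1}-x_{-1}^\omega\|$ from Lemma~\ref{lem:grad-lip}, so that closeness in $y$ after one backward step controls closeness in phase space, and (ii) the Lyapunov inequalities \eqref{eq:metric}, \eqref{eq:inf-lya} for $Q^\infty$, so that $\|y_{-1}-x_{-1}^\omega\|$ is controlled by $\|y_0-x_0^\omega\|$ in the first place. Neither step is part of the generic Oseledets/Pesin package; they use the variational structure of the viscosity solution. Your sketch does not see this conversion, so the claim that the remaining work is ``mechanical'' undersells what is actually needed. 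The treatment of $C(\omega)$, by contrast, really is the standard non-uniform-hyperbolicity fact you describe, and the paper handles it exactly that way.
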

\begin{proof}
	Lemma~\ref{lem:tempering} applies to $a(\omega)$ and $K(\omega)$ since $\bE (\log a^{-1}), \bE(\log K) < \infty$. The fact that $C(\omega)$ and $r(\omega)$ are tempered can be proven by adapting the proof Theorem 6.1 in  \cite{KZ12}. We now explain the adaptations required. 

	In \cite{KZ12}, there exists local linear coordinates $(s,u)$ with the formula $(y, \eta) = P_j(s, u)$ centered at the global minimizer $(x_j^\omega, v_j^\omega)$, with the estimates $\|DP_j\|, \|DP_j^{-1}\| \le K(\theta^j \omega)a^{-\frac12}(\theta^j \omega)$ (section 6 of \cite{KZ12}). Then it is shown that there exists a random variable $r_0(\omega)$ (called $r$ in that paper) such that any orbit $(y, \eta) \in \{(y, \nabla \psi^-_\omega(y, 0))\} \cap \{\|s\|, \|u\| < r_0\}$ must be contained in the stable manifold. $r$ is tempered because it is the product of tempered random variables.  Indeed, the following explicit formula was given in the Proof of Theorem 6.1, section 7 of \cite{KZ12}: 
	\[
		\bar{r} = \tilde{C}^{-3}K^{-9}a^6\kappa^2\rho^2, \quad r_0 = \bar{r}(\theta^{-1}\omega)K^3(\theta^{-1}\omega)a(\theta^{-1}\omega)\kappa^{\frac12}(\theta^{-1}\omega),
	\]
	where $K, a$ are the same as in this paper, and the fact that  $\rho, \tilde{C}$ are tempered is explained in Lemma 6.5 and Proposition 7.1 of \cite{KZ12}. We now convert to the variable $(y, \eta)$. Since the norm of the coordinate changes are tempered, there exists a tempered random variable  $r_1(\omega)$ such that any orbit contained in 
	\[
		\{(y, \nabla \psi^-_\omega(y, 0))\} \cap \{\|(y, \eta) - (x_0^\omega, v_0^\omega)\| < r_1(\omega)\} 
	\]
	must be contained in the unstable manifold of $(x_0^\omega, v_0^\omega)$.

	We now show the same conclusion holds on a neighborhood of the configuration space $\|y - x_0^\omega\|< r(\omega)$, with $r(\omega)$ tempered. Let $(y_0, \eta_0) = (y_0, \nabla \psi^-(y_0, 0))$ and let $(y_{-1}, \eta_{-1})$ be its backward image. According to Lemma~\ref{lem:grad-lip}, $\|\eta_{-1} - v_{-1}^\omega\| \le K(\theta^{-1}\omega)\|y_{-1} - x_{-1}^\omega\|$, as a result
	\[
		\|y_{-1} - x_{-1}^\omega\| < \frac{r_1}{1+K}((\theta^{-1}\omega)) \text{ implies } \|(y_{-1}, \eta_{-1}) - (x_{-1}^\omega, v_{-1}^\omega)\| < r_1(\theta^{-1}\omega). 
	\]
	Finally, using \eqref{eq:metric} and \eqref{eq:inf-lya}, we have 
	\[
		\begin{aligned}
			& \|y_{-1} - x_{-1}\| \le a^{\frac12}(\theta^{-1}\omega) \left( Q^\infty(y_{-1}, -1) - Q^\infty(x_{-1}, -1) \right)                                                \\
			& \le  a^{\frac12}(\theta^{-1}\omega) \left( Q^\infty(y_{0}, 0) - Q^\infty(x_{0}, 0) \right) \le a^{\frac12}(\theta^{-1}\omega) K^{\frac12}(\omega) \|y_0 - x_0\|, 
		\end{aligned}
	\]
	We obtain that 
	\[
		\|y_0 - x_0^\omega\| < \frac{r_1}{(1+K)(a^{-\frac12} \circ \theta^{-1}) K^{\frac12}} =: r
	\]
	implies $(y_1, \eta_1)$ is contained in the unstable manifold of $(x_{-1}^\omega, v_{-1}^\omega)$. $r$ is tempered as it is products of tempered random variables. 

	The fact that an orbit on the stable manifold of a non-uniformly hyperbolic orbit converge at the rate $C(\omega) e^{-\lambda n}$, and that the coefficient $C(\omega)$ is tempered is a standard result in non-uniform hyperbolicity, see for example \cite{BP07}. 
\end{proof}

We now use what we obtained to get an approximation for the stationary solutions. 

\begin{lemma}\label{lem:phi-minus-approx}
There exists $C_1^\epsilon(\omega)>0$, $e^{-\epsilon} \le C_1(\omega)/C_1(\theta^\omega) \le e^\epsilon$,  such that for all $\|y - x_0^\omega\| < r(\omega)$,  and $n < 0$, we have 
\[
	\left\|  \psi^-(y, 0) - \psi^-(x_n, n) - A_{n,0}^{\omega,b}(x_n, y)  \right\| \le C_1(\omega) e^{- (\lambda'-\epsilon)|n|}.
\]
We also have the forward version: for $n > 0$, 
\[
	\left\|  \psi^+(y, 0) - \psi^+(x_n, n) + A_{0,n}^{\omega,b}(y, x_n)  \right\| \le C_1(\omega) e^{- (\lambda'-\epsilon)|n|}.
\]
\end{lemma}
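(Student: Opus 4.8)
The plan is to compare the exact Lax--Oleinik identity for a backward minimizer through $y$ with the quantity in the statement, and then convert the exponential approach of that minimizer to the global minimizing orbit into the claimed bound using the tempered Lipschitz estimates. Throughout, $x_n$ denotes the point $x_n^\omega$ of the global minimizing orbit (were $x_n$ instead the backward minimizer issued from $y$, the left-hand side would vanish identically by Lemma~\ref{lem:min}(2), so there would be nothing to prove).

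First I would fix $y$ with $\|y - x_0^\omega\| < r(\omega)$ and $n < 0$, and choose by Lemma~\ref{lem:min}(2) a backward minimizer $(y_j, \eta_j)_{j \le 0}$ for $\psi^-(\cdot, 0)$ with $y_0 = y$; the same lemma gives the \emph{exact} identity $\psi^-(y, 0) = \psi^-(y_n, n) + A_{n,0}^{\omega,b}(y_n, y)$. Subtracting $\psi^-(x_n^\omega, n) + A_{n,0}^{\omega,b}(x_n^\omega, y)$ from both sides, the quantity in the statement equals $\bigl(\psi^-(y_n,n) - \psi^-(x_n^\omega,n)\bigr) + \bigl(A_{n,0}^{\omega,b}(y_n,y) - A_{n,0}^{\omega,b}(x_n^\omega,y)\bigr)$, and both terms are controlled by $\|y_n - x_n^\omega\|$: by Lemma~\ref{lem:prop-act}, $\psi^-(\cdot, n)$ is $K(\theta^n\omega)$-Lipschitz and $A_{n,0}^{\omega,b}(\cdot, y)$ is Lipschitz in its first argument with a constant that is a dimensional multiple of $K(\theta^n\omega)$. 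Hence the error is at most $c\,K(\theta^n\omega)\,\|y_n - x_n^\omega\|$ for a dimensional constant $c$. Since $\|y - x_0^\omega\| < r(\omega)$, Proposition~\ref{prop:unst-stab}(2), equivalently \eqref{eq:exp-backward}, gives $\|y_n - x_n^\omega\| \le C(\omega)\,e^{-\lambda'|n|}$ for $n \le 0$, so the error is $\le c\,K(\theta^n\omega)\,C(\omega)\,e^{-\lambda'|n|}$.

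The last, and only delicate, step is to absorb the factor $K(\theta^n\omega)$ — which may grow subexponentially in $|n|$ — into the exponential rate at the cost of $\epsilon$. Applying Lemma~\ref{lem:tempered-arK} with parameter $\epsilon/2$ gives $K(\theta^n\omega) \le K^{\epsilon/2}(\theta^n\omega) \le K^{\epsilon/2}(\omega)\,e^{(\epsilon/2)|n|}$, and one may replace $C(\omega)$ by its tempered majorant $C^{\epsilon/2}(\omega)$; then the error is bounded by $c\,K^{\epsilon/2}(\omega)\,C^{\epsilon/2}(\omega)\,e^{-(\lambda'-\epsilon/2)|n|} \le C_1^\epsilon(\omega)\,e^{-(\lambda'-\epsilon)|n|}$ with $C_1^\epsilon(\omega) := c\,K^{\epsilon/2}(\omega)\,C^{\epsilon/2}(\omega)$, which, being a product of two $(\epsilon/2)$-tempered random variables, satisfies $e^{-\epsilon} \le C_1^\epsilon(\omega)/C_1^\epsilon(\theta\omega) \le e^\epsilon$. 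The forward statement is proved in exactly the same way: replace the backward minimizer and $\psi^-$ by the forward minimizer through $y$ and $\psi^+$, use the corresponding exact identity $\psi^+(y,0) = \psi^+(y_n,n) - A_{0,n}^{\omega,b}(y, y_n)$, the Lipschitz bounds for $\psi^+(\cdot, n)$ and $A_{0,n}^{\omega,b}(y, \cdot)$ from Lemma~\ref{lem:prop-act} (both dominated by tempered random variables), and the forward half of Proposition~\ref{prop:unst-stab}(2) with its tempered version. I expect no conceptual obstacle: the content is simply that exponential contraction toward the global minimizer, combined with tempered Lipschitz control, upgrades the exact semigroup identity at the (moving) minimizer point to an exponential estimate at the fixed point $x_n^\omega$; the one thing to watch is the $\epsilon$-bookkeeping, which is why the tempering is run with parameter $\epsilon/2$ so that $C_1^\epsilon$ comes out $\epsilon$-tempered while the rate is exactly $\lambda'-\epsilon$.
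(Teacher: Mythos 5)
Your proof is correct and follows essentially the same route as the paper: use the exact backward-minimizer identity for $\psi^-(y,0)$, compare it to the expression at the global minimizer $x_n^\omega$ via Lipschitz bounds, invoke the exponential approach $\|y_n - x_n^\omega\| \le C(\omega)e^{-\lambda'|n|}$ from Proposition~\ref{prop:unst-stab}, and absorb the slowly growing factor $K(\theta^n\omega)$ by tempering with parameter $\epsilon/2$. The only (cosmetic) difference is that the paper obtains the one-sided inequality $\psi^-(y,0) \le \psi^-(x_n,n)+A_{n,0}^{\omega,b}(x_n,y)$ for free from the Lax--Oleinik definition and needs the Lipschitz estimate only for the reverse direction, while you estimate both sides via the identity; the bookkeeping and conclusion are the same.
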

\begin{proof}
	We only prove the backward version. 
	By definition, 
	\[
		\psi^-(y,0)  \le \psi^-(x_n,n) + A_{n, 0}^{\omega,b}(x_n, y).
	\]
	On the other hand, let $(y_n, \eta_n)_{n\le 0}$ be the minimizer for $\psi^-_\omega(\psi, 0)$, then 
	\begin{align*}
		& \psi^-(y, 0) = \psi^-(y_n,n) + A_{n, 0}^{\omega,b}(y_n, y)                                                              \\ 
		& \ge \psi^-(x_n, n) + A_{n,0}^{\omega,b}(x_n, y) - K^\epsilon(\theta^n \omega)\|x_n - y_n\|                              \\
		& \ge \psi^-(x_n, n) + A_{n,0}^{\omega,b}(x_n, y) -  K^\epsilon( \omega) C^\epsilon(\omega) e^{-(\lambda'-\epsilon) |n|}, 
	\end{align*}
	where $C^\epsilon$ is from Lemma~\ref{lem:tempered-arK}. 
	The lemma follows by repeating the calculation with $\epsilon/2$, and  taking $C_1^\epsilon(\omega) = K^{\epsilon/2}(\omega) C^{\epsilon/2}(\omega)$. 
\end{proof}

\section{Reducing to local convergence}
\label{sec:local-conv}

In this section we reduce the main theorem to its local version. 
\begin{proposition}\label{prop:local-conv}
Under the same assumptions as Proposition~\ref{prop:hyp}, there exists $0 < \lambda < \lambda_{d+1}(\nu)$,  positive random variables $0<\tau(\omega)<r(\omega)$,  $D_0(\omega)>0$, and $N_0(\omega)>0$ such that for all $N >  N_0(\omega)$ and $\varphi \in C(\T)$, 
\[
	\sup_{\varphi \in C(\T)} \min_{C \in \R}\max_{\|y - x_0^\omega\| \le \tau(\omega)}  \left| K_{-N,0}^{\omega,b}\varphi(y) - \psi^-_\omega(y, 0)  - C \right |  \le D_0(\omega) e^{- \lambda N}. 
\]
\end{proposition}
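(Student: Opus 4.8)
\emph{Overall plan.} Proposition~\ref{prop:local-conv} is precisely the ``upgrade'' step announced as Step~C of the Introduction (carried out in Section~\ref{sec:upgrade}): one starts from the \emph{qualitative} convergence of Proposition~\ref{prop:backward-min}(1b) and iteratively improves the rate of local convergence near the global minimizer, each improvement being powered by Step~B, i.e.\ by Proposition~\ref{prop:dt-localize}, which I take as given. Fix a small $\epsilon>0$, put $\lambda'=\lambda_{d+1}(\nu)-\epsilon$, let $q>1$ be the gain exponent of Proposition~\ref{prop:dt-localize}, and let $0<\tau(\omega)<r(\omega)$ be the tempered radius of Lemma~\ref{lem:tempered-arK}, shrunk so that \eqref{eq:metric}, \eqref{eq:inf-lya} and Lemma~\ref{lem:phi-minus-approx} hold on $\{\|y-x_0^\omega\|\le\tau(\omega)\}$. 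Write
\[
	D(N):=\sup_{\varphi\in C(\T^d)}\ \min_{C\in\R}\ \max_{\|y-x_0^\omega\|\le\tau(\omega)}\bigl|K_{-N,0}^{\omega,b}\varphi(y)-\psi^-_\omega(y,0)-C\bigr|.
\]
We must produce a \emph{non-random} $\lambda>0$ and random $D_0(\omega),N_0(\omega)$ with $D(N)\le D_0(\omega)e^{-\lambda N}$ for $N>N_0(\omega)$. The argument has two phases: first drive $D(N)$ to an exponential bound with a \emph{random} rate, then bootstrap the rate up to a non-random value.

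\emph{The upgrade inequality.} The engine is an estimate of the form: there are a fraction $c\in(0,1)$ (which may be chosen with $qc>1$), a non-random $\lambda_1>0$, and \emph{tempered} random variables $A(\omega),B(\omega)$, depending only on $\epsilon$ and the structural data of Proposition~\ref{prop:hyp}, such that
\[
	D(T)\le\delta\ \text{ for all }T\ge T_*\quad\Longrightarrow\quad D(N)\le A(\omega)\,\delta^{q}+B(\omega)\,e^{-\lambda_1 N}\ \text{ for all }N\ge T_*/c .
\]
To obtain it, fix $N$, $\varphi$, and $y$ with $\|y-x_0^\omega\|\le\tau(\omega)$, let $(x_j,v_j)_{j=-N}^{0}$ be the minimizer for $K_{-N,0}^{\omega,b}\varphi(y)$ from Lemma~\ref{lem:min}, and feed $D(N)\le\delta$ into Proposition~\ref{prop:dt-localize} to obtain a time $j^\ast$, the same for every such $y$, with $|j^\ast|\ge cN$ and $\|x_{j^\ast}-x_{j^\ast}^\omega\|\le\delta^{q}+C(\omega)e^{-\lambda' N}$ (the last term being the exponentially small error that Step~B cannot remove). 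By the semigroup property $K_{-N,0}^{\omega,b}\varphi(y)=K_{-N,j^\ast}^{\omega,b}\varphi(x_{j^\ast})+A_{j^\ast,0}^{\omega,b}(x_{j^\ast},y)$, and I compare this with the decomposition $\psi^-_\omega(y,0)=\psi^-_\omega(x_{j^\ast}^\omega,j^\ast)+A_{j^\ast,0}^{\omega,b}(x_{j^\ast}^\omega,y)+O\bigl(C_1(\omega)e^{-(\lambda'-\epsilon)|j^\ast|}\bigr)$ of Lemma~\ref{lem:phi-minus-approx}. Since $K_{-N,j^\ast}^{\omega,b}\varphi$ is $1$--semiconcave and hence $2\sqrt d$--Lipschitz by Lemma~\ref{lem:Lipshitz} \emph{uniformly in} $N$, I may replace $x_{j^\ast}$ by $x_{j^\ast}^\omega$ in that term and take $C:=K_{-N,j^\ast}^{\omega,b}\varphi(x_{j^\ast}^\omega)-\psi^-_\omega(x_{j^\ast}^\omega,j^\ast)$, which no longer depends on $y$; the two occurrences of the action are matched by a first-order expansion around $x_{j^\ast}^\omega$, whose leading coefficient equals $v_{j^\ast}-b=\nabla K_{-N,j^\ast}^{\omega,b}\varphi(x_{j^\ast})$ (Lemma~\ref{lem:min}) up to the shadowing error of Lemma~\ref{lem:lipsht-est}, so that it cancels against the corresponding term and only a quadratic remainder --- harmless against the semi-concavity constant of Lemma~\ref{lem:prop-act}(1) --- survives. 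Collecting terms, taking suprema over $\varphi$ and $y$, and absorbing the tempered factors (each at a shift $\theta^{j}\omega$, $|j|\le N$, controlled by Lemma~\ref{lem:tempered-arK}) into $A(\omega),B(\omega)$ gives the stated inequality, with $\lambda_1:=\tfrac12\min\{\lambda',(\lambda'-\epsilon)c\}$.

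\emph{The iteration.} By Proposition~\ref{prop:backward-min}(1b), for each $\delta>0$ there is a random $N_\delta(\omega)$ with $D(T)\le\delta$ for all $T\ge N_\delta(\omega)$; take the seed $\delta_0=\delta_0(\omega)$ small enough that $A(\omega)^{1/(q-1)}\delta_0\le e^{-1}$ --- a random but permitted choice --- and set $T_0:=N_{\delta_0}(\omega)$. Iterating the upgrade inequality along thresholds $T_k:=c^{-k}T_0$ and keeping the $N$--dependence of the bound, one shows $D(N)\le\Delta_k(N)$ on $[T_k,\infty)$ with $\Delta_k(N)=A(\omega)\Delta_{k-1}(cN)^q+B(\omega)e^{-\lambda_1 N}$; since $q>1$ the ``product'' part of $\Delta_k$ collapses super-exponentially in the step count, while the error part, because $qc>1$, stays exponentially small in $N$ with a rate bounded below by $\lambda_1-O(\epsilon)$. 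Evaluating at a given large $N$ with $k\asymp\log N$ steps (so that $T_k\le N$) yields $D(N)\le e^{-\mu(\omega)N}$ for all $N>N_1(\omega)$ and some \emph{random} $\mu(\omega)>0$. To upgrade the rate to a non-random one I re-enter the upgrade inequality (re-derived with a smaller, now $\omega$--dependent, $\epsilon$) with $\delta:=D(cN)\le e^{-\mu cN}$: the conclusion $D(N)\le A(\omega)e^{-q\mu cN}+B(\omega)e^{-\lambda_1 N}$ improves the rate from $\mu$ to $\min\{qc\,\mu-\epsilon,\lambda_1\}$, which --- since $qc>1$ and $\epsilon$ may be taken below $\mu(qc-1)$ --- is strictly larger than $\mu$ and eventually saturates at the \emph{non-random} value $\lambda_1$. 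After finitely many (random but finite) such steps, $D(N)\le e^{-\lambda N}$ for all $N>N_0(\omega)$ with $\lambda:=\lambda_1/2$; enlarging $N_0(\omega)$ and setting $D_0(\omega)=1$ finishes the proof.

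\emph{Main difficulty.} The deep part is Proposition~\ref{prop:dt-localize} itself --- the combined variational and non-uniform-hyperbolic statement behind the upgrade inequality --- whose proof is deferred to Sections~\ref{sec:finite-time}--\ref{sec:hyper-theory}. Within the present argument the delicate point is the bookkeeping of the random constants and rates so that the final $\lambda$ comes out non-random: when $x_{j^\ast}$ is replaced by $x_{j^\ast}^\omega$ the naive Lipschitz estimate carries the tempered factor $K(\theta^{j^\ast}\omega)$, a priori of size $e^{\epsilon|j^\ast|}$, so one must extract the first-order term (with its bounded, $y$--independent coefficient) and be left only with a quadratic remainder --- equivalently, run the estimate through the Lyapunov function $Q^\infty$ via \eqref{eq:metric}--\eqref{eq:inf-lya}, so that tempered factors only ever multiply squares of exponentially small quantities; and the product of the remaining tempered factors over the $\asymp\log N$ iterations of the first phase, together with the per-step loss $\epsilon$ in the second phase, must be kept strictly below the available margins, which is exactly what forces the chosen dependencies of $\epsilon$, $\delta_0$ and $c$ on $\omega$.
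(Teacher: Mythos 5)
Your high-level plan (seed with the qualitative convergence of Proposition~\ref{prop:backward-min}, then iterate Proposition~\ref{prop:dt-localize} to upgrade $\delta\mapsto\delta^{q}$ until an exponentially small level is reached) is the right one, and your final step --- fixing the constant $C$ at the localization time and comparing via Lemma~\ref{lem:phi-minus-approx} --- is essentially how the paper passes from the interior estimate to time $0$. But your iteration scheme is purely backward, and this is where it breaks. Proposition~\ref{prop:dt-localize} takes as input the condition $\Apm(\omega,\varphi,\delta,N)$, i.e.\ $\delta$-closeness of $\psi^N$ to $\psi^-$ along the minimizer on the \emph{entire} window $[-N/3,0]$, and returns localization $\|y_k-x_k^\omega\|\lesssim\delta^q$ at a single time $k$ with $|k|\le\min\{\tfrac{1}{8\epsilon}|\log\delta|,\,N/6\}$, i.e.\ very close to the right endpoint. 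From such a $k$ you can propagate the improvement to the \emph{backward} solution only at times $j\in[k,0]$ (via $K^{\omega,b}_{-N,j}\varphi(y_j)=K^{\omega,b}_{-N,k}\varphi(y_k)+A^{\omega,b}_{k,j}(y_k,y_j)$ and one Lipschitz estimate at time $k$), a window of length $O(|\log\delta|)\ll N/3$. So the improved $\Apm$ condition at level $\delta^{q}$ cannot be re-established on $[-N/3,0]$, and the second application of Proposition~\ref{prop:dt-localize} is not available. Rescaling the run-time ($N\ge T_*/c$) does not help: the obstruction is about \emph{where} in the time interval the improvement lives, not how long the interval is; and re-deriving the hypothesis at every shifted frame $\theta^n\omega$ fails at the positive-density set of times where the random constants $\rho(\theta^n\omega)$, $N_1(\theta^n\omega)$ are out of control. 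Two smaller but real problems: the localization time $j^\ast$ produced by Proposition~\ref{prop:dt-localize} depends on the minimizer, hence on $y$, so your claim that it is ``the same for every such $y$'' is unjustified; and $D(T)\le\delta$ controls closeness only on the ball of radius $\tau$, whereas $\Apm$ requires it on the larger ball of radius $r(\theta^n\omega)$.

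The paper's resolution is the forward--backward alternation of Lemma~\ref{lem:backforw}, which you never invoke and which is the essential missing idea. One fixes a single $\beta$-good sub-interval $[n_1,n_2]$ in the middle third of $[-N,0]$ (Lemma~\ref{lem:regular-interval}) and alternates: from $\Apm$ on $[n_1,n_2]$ one gets localization at a \emph{late} time $k$, which improves the \emph{forward} solution $\chK^{\omega,b}_{j,0}\varphi^N_\omega$ at \emph{all earlier} times $j$ in the first third of $[n_1,n_2]$ (the forward operator propagates information from late times to early times), yielding $\App$ at level $\delta_1=\max\{\delta^{q-1/4},e^{-cN}\}$; the forward version of Proposition~\ref{prop:dt-localize} then localizes at an \emph{early} time, which improves the \emph{backward} solution on the whole last third, restoring $\Apm$ at level $\delta_1$; and one repeats at fixed $N$ until $\delta<e^{-(\lambda'-5\epsilon)N/54}$. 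This is also why no ``second phase'' bootstrapping of a random rate to a non-random one is needed: the iteration terminates directly at the non-random rate. Along the way, Lemma~\ref{lem:middle-almost-min} (the guiding orbit) is what lets one shift the reference point $x_k^\omega$ to $x_j^\omega$ in the constants, and Corollary~\ref{cor:two-way-minimizer} is what makes the same orbit a minimizer for both operators. To repair your write-up you would need to introduce the forward operator and the $\App$ condition and reorganize the iteration around a good sub-interval; as written, the upgrade inequality you posit cannot be established by the argument you sketch.
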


The proof of Proposition~\ref{prop:local-conv} is given in the next section. Next, we have a localization result, which says any minimizer for $K_{-N, M}^{\omega,b}\varphi$ or $\psi^-(\cdot, M)$ go through the neighborhood $\{ \|x- x_0^\omega\| < \rho(\omega) \}$ at time $t =0$, when $N, M$ are large enough.

\begin{proposition}\label{prop:localize}
Under the same assumptions as Proposition~\ref{prop:hyp}, let $\tilde{\rho}(\omega)>0$ be  a positive random variable. Then  there exists $M_0 = M_0(\omega)\in \N$ depending on $\tilde{\rho}(\omega), \omega$ such that the following hold. 

\begin{enumerate}
	\item  For any $N, M \ge M_0(\omega)$, let $(y_n, \eta_n)$, $-N \le n \le M$ be a (backward) minimizer for $K_{-N, M}^{\omega, b}\varphi(y_{M})$. Then $\|y_0 - x_0^\omega\| < \tilde{\rho}(\omega)$.
	\item (The forward version) For any $N, M \ge M_0(\omega)$, let $(y_n, \eta_n)$, $-N \le n \le M$ be a (forward) minimizer for $\chK_{-N, M}^{\omega, b}\varphi(y_{-N})$. Then $\|y_0 - x_0^\omega\| < \tilde{\rho}(\omega)$.
\end{enumerate}
\end{proposition}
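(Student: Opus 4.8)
The plan is to use the Lyapunov function $Q^\infty$ together with the variational characterization of the global minimizer to trap the time-$0$ point of any long minimizer. I will prove item (1); item (2) is symmetric under time reversal, exchanging $\psi^-,K$ with $\psi^+,\chK$ and using the forward version of the relevant lemmas. So fix $\tilde\rho(\omega)>0$, and let $(y_n,\eta_n)_{-N\le n\le M}$ be a backward minimizer for $K_{-N,M}^{\omega,b}\varphi(y_M)$, with $N,M\ge M_0(\omega)$ to be chosen. The quantity I want to control is $Q^\infty(y_0,0)-Q^\infty(x_0^\omega,0)$: by the lower bound in \eqref{eq:metric}, if this is $<a(\omega)\tilde\rho(\omega)^2$ and $\tilde\rho(\omega)<r(F)$, then $\|y_0-x_0^\omega\|<\tilde\rho(\omega)$ and we are done.

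First I would bound $Q^\infty(y_0,0)=\psi^-(y_0,0)-\psi^+(y_0,0)$ from above. For the $\psi^-$ part: since the minimizer is optimal on $[-N,0]$, $\psi^-(y_0,0)\le\psi^-(y_{-N},-N)+A_{-N,0}^{\omega,b}(y_{-N},y_0)$ only goes the wrong way, so instead I compare the \emph{finite-time} solution $K_{-N,0}^{\omega,b}\varphi(y_0)$ (which equals $\varphi(y_{-N})+A_{-N,0}^{\omega,b}(y_{-N},y_0)$ by Lemma~\ref{lem:min}, since $(y_n)_{-N}^0$ is a minimizer for it) to $\psi^-(y_0,0)$: by Proposition~\ref{prop:local-conv} together with Proposition~\ref{prop:backward-min}(1)(b), $K_{-N,0}^{\omega,b}\varphi\to\psi^-(\cdot,0)$ in $\|\cdot\|_*$ uniformly in $\varphi$, hence for $N$ large (depending on $\omega$) this difference is small up to an additive constant. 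For the $\psi^+$ part I run the same comparison forward on $[0,M]$: the tail $(y_n)_0^M$ is a minimizer, and one shows $\chK_{0,M}^{\omega,b}(K_{-N,M}^{\omega,b}\varphi)$ restricted near $y_0$ is close to $\psi^+(\cdot,0)$, again uniformly, by the forward analogue of Proposition~\ref{prop:local-conv}. Subtracting the two estimates, the unknown additive constants combine into a single constant $C$, and since $Q^\infty(y_0,0)\ge Q^\infty(x_0^\omega,0)$ always, the constant is pinned down: $Q^\infty(y_0,0)-Q^\infty(x_0^\omega,0)$ is bounded by a quantity tending to $0$ as $\min(N,M)\to\infty$.

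Concretely I would argue: $\psi^-(y_0,0)\le \varphi(y_{-N})+A_{-N,0}^{\omega,b}(y_{-N},y_0)+o(1)+C_1$ and $-\psi^+(y_0,0)\le -\varphi(y_{-N})-A_{-N,0}^{\omega,b}(y_{-N},y_0)+o(1)+C_2$ are not quite the right pairing; rather, writing $\Psi:=K_{-N,M}^{\omega,b}\varphi$, the whole curve $(y_n)_{-N}^M$ realizes $\Psi(y_M)$, so $\Psi(y_M)=\varphi(y_{-N})+A_{-N,M}^{\omega,b}(y_{-N},y_0)+A_{0,M}^{\omega,b}(y_0,y_M)$, and by Corollary~\ref{cor:two-way-minimizer}, $K_{-N,0}^{\omega,b}\varphi(y_0)=\chK_{0,M}^{\omega,b}\Psi(y_0)$. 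Now $K_{-N,0}^{\omega,b}\varphi$ is $\|\cdot\|_*$-close to $\psi^-(\cdot,0)$ for large $N$, and $\chK_{0,M}^{\omega,b}\Psi$ is $\|\cdot\|_*$-close to $\psi^+(\cdot,0)$ for large $M$, both uniformly; taking the difference at $y_0$ kills the common value and yields $\psi^-(y_0,0)-\psi^+(y_0,0)\le C+\varepsilon_N+\varepsilon_M'$ for a single constant $C$, with $\varepsilon_N,\varepsilon_M'\to0$. Evaluating the same inequality at $x_0^\omega$ (where equality essentially holds, using Proposition~\ref{prop:glob}) forces $C\le Q^\infty(x_0^\omega,0)+\varepsilon_N+\varepsilon_M'$, hence $Q^\infty(y_0,0)-Q^\infty(x_0^\omega,0)\le 2(\varepsilon_N+\varepsilon_M')$. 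Choosing $M_0(\omega)$ so large that $2(\varepsilon_N+\varepsilon_M')<a(\omega)\min(\tilde\rho(\omega),r(F))^2$ for all $N,M\ge M_0(\omega)$ finishes the proof.

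The main obstacle I anticipate is the bookkeeping of the additive constants and making the ``$\chK_{0,M}^{\omega,b}\Psi$ is close to $\psi^+(\cdot,0)$'' step rigorous: Proposition~\ref{prop:local-conv} is stated for $K_{-N,0}^{\omega,b}$ applied to an \emph{arbitrary} continuous $\varphi$, and here the forward operator is applied to $\Psi=K_{-N,M}^{\omega,b}\varphi$, which is a specific (semi-concave, equi-Lipschitz by Lemma~\ref{lem:prop-act}) function — but the uniformity over all $\varphi\in C(\T^d)$ in Propositions~\ref{prop:backward-min} and~\ref{prop:local-conv} is exactly what allows this. One must also be slightly careful that the convergence is only claimed up to the semi-norm $\|\cdot\|_*$, so only \emph{differences} of the solution at two points are controlled, which is why the argument is set up to compare values at $y_0$ versus $x_0^\omega$ rather than to control $\psi^-(y_0,0)$ on the nose.
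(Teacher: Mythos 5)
Your argument is essentially the paper's, just rephrased from the argmin picture into an explicit chain of inequalities. The paper shows that $y_0$ lies in $\argmin\{\psi^{-N,0}-\chK^{\omega,b}_{0,M}\psi^{-N,M}\}$ (via Lemmas~\ref{lem:ineq-forward-backward} and~\ref{lem:forward-backward}), observes that this functional is $\|\cdot\|_*$-close to $Q^\infty(\cdot,0)$ once $N,M$ are large (Proposition~\ref{prop:backward-min} and its forward dual), and concludes that the argmin cannot leave the ball because $x_0^\omega$ is the unique minimizer of $Q^\infty$. Your computation with the equality $K^{\omega,b}_{-N,0}\varphi(y_0)=\chK^{\omega,b}_{0,M}\Psi(y_0)$ (Corollary~\ref{cor:two-way-minimizer}) and the inequality $K^{\omega,b}_{-N,0}\varphi(x_0^\omega)\ge\chK^{\omega,b}_{0,M}\Psi(x_0^\omega)$ (Lemma~\ref{lem:ineq-forward-backward}) carries exactly the same content.

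Two corrections are needed. First, you cite Proposition~\ref{prop:local-conv} for the uniform convergence to $\psi^\mp$; that would be circular, since Proposition~\ref{prop:localize} is used later in the proof of Proposition~\ref{prop:local-conv}. The uniform convergence you need is already given (without any rate) by Proposition~\ref{prop:backward-min}(1)(b) for the backward solution and by its forward analogue, stated as the bullet points after Lemma~\ref{lem:min} for $\chK$ and $\psi^+$; those are the right citations. Second, the final step ``$Q^\infty(y_0,0)-Q^\infty(x_0^\omega,0)<a(\omega)\tilde\rho(\omega)^2 \Rightarrow \|y_0-x_0^\omega\|<\tilde\rho(\omega)$'' is not justified by \eqref{eq:metric} alone: that quadratic lower bound holds only for $\|y-x_0^\omega\|<R(F,\rho)$, so if $y_0$ were outside that ball the implication would fail. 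The paper avoids this by setting $\delta(\omega)=\min_{\|x-x_0^\omega\|\ge\tilde\rho(\omega)}\{Q^\infty(x,0)-Q^\infty(x_0^\omega,0)\}>0$, a strictly positive quantity because $x_0^\omega$ is the unique global minimizer of the continuous $Q^\infty(\cdot,0)$ on the compact torus (Proposition~\ref{prop:glob}), and then takes $M_0(\omega)$ large enough that $2(\varepsilon_N+\varepsilon'_M)<\delta(\omega)$. You should replace the appeal to \eqref{eq:metric} with this global gap; otherwise the structure of your argument is sound.
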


We prove Proposition~\ref{prop:localize} using the following lemma, stating that the Lax-Oleinik operators are weak contractions. 
\begin{lemma}[\cite{IK03}, Lemma 3]\label{lem:weak-contraction} 
For any $\varphi_1,\chphi_2 \in C(\T^d)$, we have 
\[
	\| K_{m,n}^{\omega,b} \varphi_1 - K_{m,n}^{\omega, b}\chphi_2 \|_* \le \|\varphi_1 - \chphi_2\|_*. 
\]
\end{lemma}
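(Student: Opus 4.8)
\textbf{Proof plan for Lemma~\ref{lem:weak-contraction}.} The plan is to exploit the fact that the backward Lax--Oleinik operator $K_{m,n}^{\omega,b}$ is built from the action function $A_{m,n}^{\omega,b}$ by an infimal convolution, and that such operations do not increase oscillations measured by $\|\cdot\|_*$. First I would recall that $\|\psi\|_* = \min_{C}\sup_x|\psi(x)-C|$ is the same as $\tfrac12\,\mathrm{osc}(\psi)$, where $\mathrm{osc}(\psi) = \sup_x\psi(x) - \inf_x\psi(x)$; equivalently, $\|\varphi_1-\varphi_2\|_* = \tfrac12\big(\sup_x(\varphi_1-\varphi_2) - \inf_x(\varphi_1-\varphi_2)\big)$. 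So it suffices to show that
\[
	\sup_x \big(K_{m,n}^{\omega,b}\varphi_1(x) - K_{m,n}^{\omega,b}\varphi_2(x)\big) - \inf_x \big(K_{m,n}^{\omega,b}\varphi_1(x) - K_{m,n}^{\omega,b}\varphi_2(x)\big) \le \sup_x(\varphi_1-\varphi_2) - \inf_x(\varphi_1-\varphi_2).
\]

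The key step is the pointwise estimate: for every $x\in\T^d$,
\[
	\inf_y(\varphi_1-\varphi_2)(y) \ \le\ K_{m,n}^{\omega,b}\varphi_1(x) - K_{m,n}^{\omega,b}\varphi_2(x) \ \le\ \sup_y(\varphi_1-\varphi_2)(y).
\]
To see the upper bound, let $y^*$ achieve the minimum in the definition \eqref{eq:lax-bk} of $K_{m,n}^{\omega,b}\varphi_2(x)$; then
\[
	K_{m,n}^{\omega,b}\varphi_1(x) \le \varphi_1(y^*) + A_{m,n}^{\omega,b}(y^*,x) \le \varphi_2(y^*) + \sup_y(\varphi_1-\varphi_2)(y) + A_{m,n}^{\omega,b}(y^*,x) = K_{m,n}^{\omega,b}\varphi_2(x) + \sup_y(\varphi_1-\varphi_2)(y).
\]
The lower bound follows by the symmetric argument (swap the roles of $\varphi_1$ and $\varphi_2$, or pick the minimizing $y$ for $K_{m,n}^{\omega,b}\varphi_1(x)$). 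Once this two-sided pointwise bound is established, the function $g(x) := K_{m,n}^{\omega,b}\varphi_1(x) - K_{m,n}^{\omega,b}\varphi_2(x)$ satisfies $\inf_y(\varphi_1-\varphi_2) \le g(x) \le \sup_y(\varphi_1-\varphi_2)$ for all $x$, hence $\mathrm{osc}(g) \le \mathrm{osc}(\varphi_1-\varphi_2)$, which is exactly $\|K_{m,n}^{\omega,b}\varphi_1 - K_{m,n}^{\omega,b}\varphi_2\|_* \le \|\varphi_1-\varphi_2\|_*$.

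There is essentially no hard part here — the statement is a soft consequence of the min-plus (infimal-convolution) structure of the Lax--Oleinik operator, and the only mild care needed is the translation between the seminorm $\|\cdot\|_*$ and the oscillation, together with noting that minimizers $y^*$ in \eqref{eq:lax-bk} exist because $\T^d$ is compact and the action $A_{m,n}^{\omega,b}$ is continuous (indeed Lipschitz). Since the result is quoted from \cite{IK03}, I would simply present the short argument above rather than reproduce the reference's proof in detail.
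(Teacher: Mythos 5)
Your proof is correct, and it is the standard (and essentially unique) argument: the pointwise two-sided bound $\inf(\varphi_1-\chphi_2)\le K_{m,n}^{\omega,b}\varphi_1 - K_{m,n}^{\omega,b}\chphi_2\le\sup(\varphi_1-\chphi_2)$ is an immediate consequence of the min-plus structure, and combined with the identity $\|\psi\|_*=\tfrac12\,\mathrm{osc}(\psi)$ it gives the weak contraction. The paper itself does not reprove this lemma but cites it from \cite{IK03}; your argument is the same one used there.
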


\begin{proof}[Proof of Proposition~\ref{prop:localize}]
	We only prove item (1), as the (2) can be proven in the same way. 
	We denote, for $-N \le m \le M$, 
	\[
		\psi^{-N,m}(x) = K_{-N, m}^{\omega,b}\varphi(x),
	\]
	and let $(y_n, \eta_n)$, $-N \le n \le M$ be a minimizer for $K_{-N, M}^{\omega,b}\varphi$. For each $-N < n < M$, Lemma~\ref{lem:ineq-forward-backward} and \ref{lem:forward-backward} implies
	\[
		\psi^{-N, n}(x) - \chK_{n, M}^{\omega,b}\left(  \psi^{-N, M} \right)(x) \ge 0, \quad x \in \T^d,
	\]
	\[
		\psi^{-N, n}(y_n) - \chK_{n, M}^{\omega,b} \left(   \psi^{-N, M}\right)(y_n) = 0,
	\]
	hence
	\begin{equation}
		\label{eq:yn-argmin}
		y_n \in \argmin_x \left\{  \psi^{-N, n}(x) - \chK_{n, M}^{\omega,b}\left(  \psi^{-N, M} \right)(x)  \right\}. 
	\end{equation}

	Recall that for $Q^\infty(x,0) = \psi^-(x,0) - \psi^+(x,0)$, we have $x_0^\omega$ is the unique minimum for $Q^\infty(\cdot, 0)$. Define 
	\[	
		\delta(\omega) = \min_{\|x - x_0^\omega\| \ge \tilde{\rho}(\omega)} Q^\infty(x) - Q^\infty(x_0^\omega). 
	\]
	By Proposition~\ref{prop:backward-min}, we can choose $M_0(\omega)$ large enough such that for $N, M \ge M_0(\omega)$, 
	\[
		\|\psi^{-N, 0} - \psi^-(\cdot, 0)\|_*, \, \| \chK_{0, M}^{\omega,b}\left(  \psi^{-N, M} \right) - \psi^+(\cdot, 0)\|_* < \delta(\omega)/4. 
	\]
	As a result, there exists a constant $C \in \R$ such that 
	\[
		\left\|  \left(  \psi^{-N, 0} - \chK_{0, M}^{\omega,b}\left(  \psi^{-N, M} \right) \right) - Q^\infty(x, 0) - C \right\| < \delta(\omega)/2. 
	\]
	It follows that the minimum in \eqref{eq:yn-argmin} is never reached outside of $\{\|x - x_0^\omega\|< \rho(\omega)\}$. We obtain  $\|y_0 - x_0^\omega\| < \tilde{\rho}(\omega)$.
\end{proof}

We now prove our main theorem assuming Proposition~\ref{prop:local-conv}. 
\begin{proof}[Proof of Theorem~\ref{thm:exp-conv}]
	It suffices to prove the theorem for $n =0$.

	Let us apply Proposition~\ref{prop:localize} with $\tilde{\rho}(\omega) = \rho(\omega)$ from Proposition~\ref{prop:local-conv}.  Let $(y_n, \eta_n)$ be a minimzier of $K_{-N, M}^{\omega, b}\varphi(y_M)$, 
	and $(\tilde{y}, \tilde{\eta}_n)$ a minimizer for
	\[
		\psi^-(y_N, N) = \left( K_{-M, N}^{\omega, b}\psi^-(\cdot, -M)  \right)(y_{N})
	\]
	with $y_M = \tilde{y}_M$, and $M = M_0(\omega)$. According to Proposition~\ref{prop:local-conv}, there exists  $C(N, \omega) \in \R$ such that for all $\|y - x_0^\omega\| < \rho(\omega)$, 
	\begin{equation}
		\label{eq:app-const}
		\left| K_{-N,0}^{\omega,b}\varphi(y) - \psi^-_\omega(y, 0)  - C(N,\omega) \right |  \le D_0(\omega) e^{-\lambda N}. 
	\end{equation}
	Then 
	\begin{align*}
		& K_{-N, M}^{\omega, b} \varphi(y_M) =  K_{-N, 0}^{\omega, b}\varphi(y_M) + A_{0, M}^{\omega,b}(y_0, y_M)              \\
		& = (K_{-N, 0}^{\omega,b}\varphi(y_0) - \psi^-_\omega(y_0, 0)) + \psi^-_\omega(y_0, 0) + A_{0, M}^{\omega,b}(y_0, y_M) \\
		& \ge C(N, \omega) + \psi^-(y_M, M) - D_0(\omega) e^{- \lambda N}.                                                     
	\end{align*}

	On the other hand, 
	\begin{align*}
		& \psi^-(\tilde{y}_M, M) = \psi^-(\tilde{y}_0, 0) + A_{0, M}^{\omega, b}(\tilde{y}_0, \tilde{y}_M)                                                                      \\
		& = (\psi^-(\tilde{y}_0, 0) - K_{-N, 0}^{\omega,b}\varphi(\tilde{y})) + K_{-N, 0}^{\omega,b}(\tilde{y}_0, \tilde{y}_M) + A_{0, M}^{\omega, b}(\tilde{y}_0, \tilde{y}_M) \\
		& \le -C(N, \omega) + K_{-N, M}^{\omega,b}\varphi(\tilde{y}_M) + D_0(\omega) e^{- \lambda N}.                                                                           
	\end{align*}
	Using $y_M = \tilde{y}_M$, combine both estimates, and take supremum over all $y_M$, we get 
	\[
		\sup_{y \in \T^d}\left|  K_{-N, M}^{\omega,b}\varphi(y) - \psi^-(y, M) - C(N, \omega)  \right| \le D_0(\omega) e^{- \lambda N}. 
	\]
	In order to shift the end time to $t=0$, let denote $E_Q = \{ \omega \in \Omega: \quad M_0(\omega) \le Q\}$ for $Q \in \N$. Fix some $Q$ such that  $P(E_Q) > 0$, then by ergodicity, almost surely, $M(\theta^k \omega) \le  Q$ for infinitely many $k$. Let us define $k(\omega)$ as the largest $k < -Q$ such that $M_0(\theta^k \omega) = Q$, then 
	\[
		\begin{aligned}
			& \left\| K_{-N, 0}^{\omega, b} \varphi - \psi^-_\omega(\cdot, 0) \right\| = \left\| K_{-N - k, k}^{\theta^k \omega, b} \varphi - \psi^-_{\theta^k \omega}(\cdot, - k) \right\| \\
			& \le D_0(\theta^k \omega) e^{-\lambda(N-k)} = D_0(\theta^k \omega) e^{\lambda k(\omega)} e^{-\lambda N}                                                                        
		\end{aligned}
	\]
	provided $N \ge \max\{Q, N_0(\theta^k \omega)\} - k$. By reducing $\lambda$ and taking $N$ larger we can absorb the constant $D_0(\theta^k \omega) e^{\lambda k(\omega)}$. 
\end{proof}

\section{Local convergence: localization and upgrade}
\label{sec:upgrade}

In this section we prove Proposition~\ref{prop:local-conv} (local convergence) using Proposition~\ref{prop:dt-localize} and a consecutive upgrade scheme. 
It is useful to have the following definition for book keeping. 
\begin{definition}\label{def:APm}
Given $\delta>0$, $N \in \N$, let $(y_n, \eta_n)_{n=-N}^0$ be a minimizer for $\psi^N_\omega(\cdot, 0) = K^{\omega,b}_{-N, 0}\varphi(\cdot)$, we say the orbit satisfies the (backward) $(\varphi, \delta, N)$ approximation property if for every $-N/3 \le n \le 0$ such that  $\|y_n - x_n^\omega\| < r(\theta^n \omega)$, we have
\[
	\left|
	\left( \psi^N_\omega(y_n, n) - \psi^N_\omega(x_n^\omega, n) \right) - \left( \psi^-_\omega(y_n, n) - \psi^-_\omega(x_n, n) \right)
	\right| < \delta. 	
\]
We denote this condition $\Apm(\omega, \varphi, \delta, N)$. 
\end{definition}
The following proposition is our main technical result, which says the approximation property allows us to estimate how close a backward minimizer is to the global minimizer:
\begin{proposition}\label{prop:dt-localize}
Let $0 < \epsilon < \lambda'/6$, there exists random variables $0<\rho(\omega)< r(\omega)$, $N_1(\omega)>0$ depending on $\epsilon$,  such that if a $\psi^N_\omega$ backward minimizer $(y_n, \eta_n)_{n = -N}^0$ satisfies the $\Apm(\omega, \varphi, \delta, N)$ condition, with \footnote{The exponential on lower bound in $\delta$ in \eqref{eq:ap-conditions} is for simpler calculation and comes with no loss of generality. Indeed, if $\delta$ is exponentially small we already have the conclusion of Proposition~\ref{prop:local-conv}.}
\begin{equation}
	\label{eq:ap-conditions}
	\|y_0 - x_0^\omega\| < \rho(\omega), \quad 
	N > N_1(\omega), \quad \delta^{\frac18} < \rho(\omega), \quad  \delta \ge e^{-(\lambda'-3\epsilon)N/3},
\end{equation}
then there exists $\max\left\{  \frac{1}{8\epsilon} \log \delta, - \frac{N}{6}  \right\} \le k <0$ such that 
\begin{equation}
	\label{eq:yk-delta}
	\|y_k - x_k^\omega\| < \max\left\{ \delta^q, \quad e^{-(\lambda'- 3\epsilon)N/6} \right\},
\end{equation}
where $q = (\lambda'-3\epsilon)/(8\epsilon)$. 
\end{proposition}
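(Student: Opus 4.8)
\textbf{Proof plan for Proposition~\ref{prop:dt-localize}.}
The plan is to exploit the Lyapunov property \eqref{eq:inf-lya} of $Q^\infty$ together with the approximation property $\Apm(\omega,\varphi,\delta,N)$ to convert the hypothesis ``$\psi^N_\omega$ is $\delta$-close to $\psi^-_\omega$ near the global minimizer'' into a quantitative statement about the backward minimizer $(y_n,\eta_n)$. The key observation is that, while $\psi^-_\omega$ admits a genuine Lyapunov function controlling distance to $x_n^\omega$, the finite-time solution $\psi^N_\omega$ does not; but on the time window where the $\Apm$ condition is active we can compare the two increments $\psi^N_\omega(y_n,n)-\psi^N_\omega(x_n^\omega,n)$ and $\psi^-_\omega(y_n,n)-\psi^-_\omega(x_n,n)$ up to the additive error $\delta$. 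Using the quadratic lower bound \eqref{eq:metric} (i.e.\ $a(\omega)\|y-x_0^\omega\|^2 \le Q^\infty(y,0)-Q^\infty(x_0^\omega,0)$), the strategy is to find a time $k$ at which the $\psi^N_\omega$-increment is small — comparable to $\delta$ — which then forces $\|y_k-x_k^\omega\|$ to be small in the square-root sense, yielding the exponent $q$ that appears in \eqref{eq:yk-delta}.

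The steps I would carry out, in order: first, write $Q^N_\omega(x,n) := \psi^N_\omega(x,n) - \chK^{\omega,b}_{n,0}(\psi^N_\omega(\cdot,0))(x)$ as the finite-time analog of $Q^\infty$, and observe (via Lemma~\ref{lem:forward-backward} and \eqref{eq:yn-argmin}) that $y_n$ minimizes $Q^N_\omega(\cdot,n)$ and that $Q^N_\omega$ satisfies a \emph{monotonicity} property along the minimizer analogous to \eqref{eq:inf-lya}. Second, use the approximation property $\Apm$ plus Lemma~\ref{lem:phi-minus-approx} (to control $\chK^{\omega,b}_{n,0}(\psi^N_\omega(\cdot,0))$ against $\psi^+_\omega$, picking up an exponentially small error $C_1(\omega)e^{-(\lambda'-\epsilon)|n|}$ since we are running forward from time $n$ to $0$ over a window of length $\ge N/3$) to conclude that on the relevant time window, $Q^N_\omega(y_n,n)-Q^N_\omega(x_n^\omega,n)$ differs from $Q^\infty(y_n,n)-Q^\infty(x_n^\omega,n)$ by at most $O(\delta)$ plus an exponentially small term — and by the lower bound $\delta \ge e^{-(\lambda'-3\epsilon)N/3}$ in \eqref{eq:ap-conditions} this exponential term is $\le \delta$, so the total error is $O(\delta)$. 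Third, run a dichotomy: either $\|y_n - x_n^\omega\| \ge r(\theta^n\omega)$ for all $n$ in the window $[-N/6,0]$ — but then the monotonicity of $Q^N_\omega$ together with the approximation property is contradicted because the solution is supposed to be close to $\psi^-_\omega$, which has its minimum near $x_n^\omega$; or there is a time where $\|y_n-x_n^\omega\| < r(\theta^n\omega)$, at which point the $\Apm$ condition kicks in and, combined with the monotonicity, forces $Q^N_\omega(y_k,k)-Q^N_\omega(x_k^\omega,k) = O(\delta)$ at some $k$ where also $\|y_k-x_k^\omega\|< r$, whence $a(\theta^k\omega)\|y_k-x_k^\omega\|^2 \le Q^\infty(y_k,k)-Q^\infty(x_k^\omega,k) = O(\delta)$, giving $\|y_k-x_k^\omega\| \le O(\sqrt{\delta/a(\theta^k\omega)})$. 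Finally, convert the bound $O(\sqrt{\delta})$ into the stated $\delta^q$ (with $q = (\lambda'-3\epsilon)/(8\epsilon)$) by iterating: the point is that once $\|y_k - x_k^\omega\|$ is moderately small, the stable-manifold estimate \eqref{eq:exp-backward} of Proposition~\ref{prop:unst-stab} makes the minimizer contract exponentially for the rest of the backward window, so choosing $k$ as far back as $\max\{\frac{1}{8\epsilon}\log\delta, -N/6\}$ (which is exactly the lower bound on $k$ in the statement) and using the exponential contraction over the remaining $|k|$ steps upgrades $\sqrt{\delta}$ to $\sqrt{\delta}\cdot e^{-(\lambda'-3\epsilon)|k|} \approx \delta^q$; the temperedness of $a,K,r,C$ from Lemma~\ref{lem:tempered-arK} ensures all the random constants picked up along the window of length $O(N)$ are absorbed into the exponentially small slack, which is why $N > N_1(\omega)$ is needed.

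I expect the main obstacle to be Step three: establishing the quantitative monotonicity/Lyapunov property for the \emph{finite-time} object $Q^N_\omega$ and showing that it genuinely propagates the $O(\delta)$ closeness backward through the window without the error compounding. Unlike \eqref{eq:inf-lya}, which is exact for the infinite-time solution, the finite-time version only holds with errors, and one must be careful that the number of steps ($O(N)$) times any per-step multiplicative error does not destroy the estimate — this is precisely where the exponential lower bound $\delta \ge e^{-(\lambda'-3\epsilon)N/3}$ and the temperedness are used to guarantee that the accumulated error stays below the threshold $\rho(\omega)$ and below $\delta$ itself. A secondary subtlety is the bookkeeping of which semi-concavity constants ($1$ versus $K(\theta^n\omega)$) apply at the endpoints of the sub-windows when invoking Lemma~\ref{lem:lipsht-est} and Lemma~\ref{lem:grad-lip}; these are routine but must be tracked to get the exponent $q$ exactly right.
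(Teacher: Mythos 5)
Your Steps 1–3 are on the right track and broadly match the paper: the paper introduces $Q^N_\omega(x,n)=\psi^N_\omega(x,n)-\psi^+_\omega(x,n)$ (using the stationary forward solution rather than your $\chK^{\omega,b}_{n,0}(\psi^N_\omega(\cdot,0))$, but these are interchangeable modulo Lemma~\ref{lem:phi-minus-approx}), establishes the exact Lyapunov monotonicity along the minimizer (Lemma~\ref{lem:QN-lya}), compares it to $Q^\infty$ via $\Apm$ to get the quadratic lower bound (Lemma~\ref{lem:QN-metric}), and arrives at a bound of the form $\|y_j-x_j^\omega\| \lesssim e^{\epsilon|j-k|}(\|y_k-x_k^\omega\|+\sqrt{\delta})$ (Lemma~\ref{lem:stab-conf}). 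So you correctly reach the $\sqrt\delta$ level.

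Step 4 is where there is a genuine gap, and it is the heart of the proposition. You invoke the stable-manifold estimate \eqref{eq:exp-backward} to conclude that the minimizer ``contracts exponentially for the rest of the backward window''. But \eqref{eq:exp-backward} is a statement about \emph{infinite backward minimizers of $\psi^-_\omega$}, i.e.\ orbits whose velocity is $\nabla\psi^-_\omega$ and which therefore lie on the \emph{unstable} manifold of the global minimizer; it is precisely because such orbits lie on $W^u$ that they contract in backward time. The finite-time minimizer $(y_n,\eta_n)$ has $\eta_n=\nabla\psi^N_\omega(y_n,n)+b$, not $\nabla\psi^-_\omega$, so it need not lie on (or stay close to) $W^u$, and there is no reason for it to contract in backward time --- indeed, the component of $Y_n - X_n^\omega$ along the stable direction \emph{expands} as $n$ decreases (Proposition~\ref{prop:nonuniform-hyp}(4)(a)). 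Backward contraction for a generic orbit near a non-uniformly hyperbolic saddle is simply false, and this cannot be repaired by the temperedness considerations you mention.

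What the paper actually does (Lemma~\ref{lem:cone-contraction}, proof of Proposition~\ref{prop:dt-localize}) is a \emph{cone dichotomy} based on Proposition~\ref{prop:nonuniform-hyp}: at each step $k$, either (i) $Y_k-X_k^\omega$ lies in the stable cone $C_k^s$, in which case backward expansion plus the $\sqrt\delta$ a-priori bound from the Lyapunov function force $d_k=\|Y_k-X_k^\omega\|'$ to be extremely small (on the order of $\delta^q$, by optimizing the length $|i_0|$ of the expansion window against the $e^{\epsilon|i_0|}$ growth of the tempered constants --- this balance, not semi-concavity bookkeeping, is where $q=(\lambda'-3\epsilon)/(8\epsilon)$ comes from); or (ii) $Y_k-X_k^\omega$ lies in the unstable cone $C_k^u$, in which case one backward step genuinely contracts, and one iterates. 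If case (ii) persists all the way down to $k_0=\max\{\tfrac{1}{8\epsilon}\log\delta,-N/6\}$, the accumulated contraction $e^{-\lambda'|k_0|}$ is again $\le\delta^q$. Your proposal has no analogue of case (i) and treats case (ii) as if it held unconditionally; this is the missing idea, and without it the upgrade from $\sqrt\delta$ to $\delta^q$ does not go through.
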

The proof require a detailed analysis using hyperbolic theory, and is deferred to the next few sections. In this section we prove Proposition~\ref{prop:local-conv} assuming Proposition~\ref{prop:dt-localize}.

We need to use both the forward and backward dynamics.

\begin{definition}\label{def:APp}
Given $\delta>0$, $N \in \N$, let $(y_n, \eta_n)_{n=0}^N$ be a minimizer for $\chpsi^N_\omega(\cdot, 0) = \chK^{\omega,b}_{0, N}\varphi(\cdot)$, we say the orbit satisfies the (forward) $(\varphi, \delta, N)$ approximation property, if for every $0 \le k \le N/3$ such that  $\|y_k - x_k^\omega\| < r(\theta^k \omega)$, we have
\[
	\left|
	\left( \chpsi^N_\omega(y_k, k) - \chpsi^N_\omega(x_k^\omega, k) \right) - \left( \psi^+_\omega(y_k, k) - \psi^+_\omega(x_k, k) \right)
	\right| < \delta. 
\]
We  denote this condition $\App(\omega, \varphi, \delta, N)$. 
\end{definition}

We state a forward version of Proposition~\ref{prop:dt-localize}. The proof is the same. 
\begin{proposition}\label{prop:forward- dt-localize}
There exists  random variables $0<\check{\rho}(\omega)< r(\omega)$ and $\check{N}_1(\omega)>0$ such that if a $\check{\psi}^N_\omega$ backward minimizer $(y_n, \eta_n)_{n = 0}^N$ satisfies the $\App(\omega, \varphi, \delta, N)$ condition, and in addition, 
\begin{equation}
	\label{eq:for-ap-conditions}
	\|y_0 - x_0^\omega\| < \check{\rho}(\omega),  \quad 
	N > \check{N}_1(\omega),  \quad \delta^{\frac18} < \check{\rho}(\omega),\quad \delta \ge \check{C}_2(\omega)e^{-(\lambda'-2\epsilon)N/3},
\end{equation}
then there exists $0 < k \le \max\left\{ - \frac{1}{8\epsilon} \log \delta,  \frac{N}{6}  \right\}$ such that 
\begin{equation}
	\label{eq:yk-delta}
	\|y_k - x_k^\omega\| < \max\left\{ \delta^q, \quad e^{-(\lambda'- 3\epsilon)N/6} \right\},
\end{equation}
where $q = (\lambda'-3\epsilon)/(8\epsilon)$. 
\end{proposition}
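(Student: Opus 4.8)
The plan is to deduce this proposition from Proposition~\ref{prop:dt-localize} by exploiting the exact time-reversal symmetry of the kicked variational problem, so that none of the hyperbolic analysis of Sections~\ref{sec:finite-time}--\ref{sec:hyper-theory} needs to be rewritten. Fix $\omega$ and let $\omega^*$ be the reversed sample point defined by $F^{\omega^*}_m(x)=F^\omega_{-m-1}(x)$; the shift by $1$ is forced by the endpoint convention in \eqref{eq:action} (the kick at the left endpoint of $[-N,0]$ is included, the one at the right endpoint of $[0,N]$ is not). Since $(\xi_j)_{j\in\Z}$ is i.i.d., $\omega^*$ has the same law as $\omega$, Assumptions~1--5 are preserved by $\omega\mapsto\omega^*$, and $\theta\omega^*=(\theta^{-1}\omega)^*$, so this map conjugates the forward and backward shift dynamics. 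With the reflection $\sigma(x,v)=(x,-v)$, formula \eqref{eq:twistmaps} gives $\sigma\circ\Phi^\omega_j\circ\sigma=(\Phi^{\omega^*}_{-j-1})^{-1}$, so $\{\Phi^\omega_j\}$ and $\{\Phi^{\omega^*}_j\}$ are conjugate via $\sigma$ with time reversed, and $\{\Phi^{\omega^*}_j\}$ still belongs to the standard family.

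Next I would record the dictionary at the level of the Lax--Oleinik operators. From the definitions and the identity $A^{\omega,b}_{s,t}(x,y)=A^{\omega^*,-b}_{-t,-s}(y,x)$ (again up to the endpoint shift) one obtains $\chK^{\omega,b}_{s,t}\varphi=-\,K^{\omega^*,-b}_{-t,-s}(-\varphi)$. Consequently the forward stationary solution satisfies $\psi^+_\omega(x,j)=-\psi^-_{\omega^*}(x,-j)$, the finite-time object $\chpsi^N_\omega$ corresponds (time-flipped, with a sign) to $\psi^N_{\omega^*}=K^{\omega^*,-b}_{-N,0}(-\varphi)$, a forward minimizer $(y_n,\eta_n)_{n=0}^N$ of $\chpsi^N_\omega$ corresponds under $n\mapsto -n$ and $\sigma$ to a backward minimizer of $\psi^N_{\omega^*}$ with the same positions, and, by uniqueness of the global minimizer (Proposition~\ref{prop:glob}), $(x^\omega_j,v^\omega_j)$ corresponds to $(x^{\omega^*}_{-j},-v^{\omega^*}_{-j})$. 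Since the approximation conditions are differences of differences of the solutions, the two overall sign flips cancel and positions are unchanged; hence the orbit satisfies $\App(\omega,\varphi,\delta,N)$ on $0\le k\le N/3$ if and only if its reversal satisfies $\Apm(\omega^*,-\varphi,\delta,N)$ on $-N/3\le n\le 0$.

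It then remains to transfer the hypotheses and conclusion of Proposition~\ref{prop:dt-localize}. The cocycle $D\Phi^{\omega^*}_0$ runs over $\hat\Phi^{-1}$, so its Lyapunov exponents are $\{-\lambda_i(\nu)\}$; with the symplectic relation $\lambda_i=-\lambda_{2d-i+1}$ the spectrum is unchanged, stable and unstable directions merely swap, and $\lambda_{d+1}(\nu)$ (so $\lambda'$) is the same. Thus Propositions~\ref{prop:hyp}(2) and \ref{prop:unst-stab}, and the tempered random variables of Lemmas~\ref{lem:tempered-arK} and \ref{lem:phi-minus-approx}, all have $\omega^*$-versions, and tempering with respect to $\theta^{-1}$ is the same property as tempering with respect to $\theta$. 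The one genuine asymmetry is that the terminal semi-concavity constant equals $1$ for $\psi^N_\omega$ but $K_0(\theta^0\omega)$ for $-\chpsi^N_\omega$ (Lemma~\ref{lem:prop-act}); tracking this random constant through the uses of Lemmas~\ref{lem:grad-lip} and \ref{lem:lipsht-est} applied to $\chpsi^N_\omega$ is exactly what introduces the extra tempered factor $\check C_2(\omega)$ in \eqref{eq:for-ap-conditions}. Putting $\check\rho(\omega)=\rho(\omega^*)$, $\check N_1(\omega)=N_1(\omega^*)$ and folding the semi-concavity constants into $\check C_2(\omega)$, Proposition~\ref{prop:dt-localize} applied to $\omega^*$ yields $\max\{\tfrac1{8\epsilon}\log\delta,-\tfrac N6\}\le k'<0$ with $\|y'_{k'}-x^{\omega^*}_{k'}\|<\max\{\delta^q,e^{-(\lambda'-3\epsilon)N/6}\}$; setting $k=-k'$ and undoing $n\mapsto -n$ gives $0<k\le\max\{-\tfrac1{8\epsilon}\log\delta,\tfrac N6\}$ with $\|y_k-x^\omega_k\|<\max\{\delta^q,e^{-(\lambda'-3\epsilon)N/6}\}$, as claimed.

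The main obstacle is the bookkeeping of the first two steps: being scrupulous about the index shift and the sign of $b$ so that the reversed problem is literally an instance to which Proposition~\ref{prop:dt-localize} applies, and checking that the sign flips cancel cleanly in the approximation conditions. A secondary point is confirming that the proof of Proposition~\ref{prop:dt-localize} nowhere uses the normalization ``terminal semi-concavity constant $=1$'' in a way not absorbable into a tempered constant; where it does, the relevant estimate is rerun with $K_0(\theta^0\omega)$ in place of $1$. One may of course instead bypass the reduction and rerun the whole argument of Sections~\ref{sec:finite-time}--\ref{sec:hyper-theory} with $\chK$, $\psi^+$ and the stable manifold replacing $K$, $\psi^-$ and the unstable manifold --- this is what ``the proof is the same'' refers to, and the time-reversal reduction above is simply a way of making it precise.
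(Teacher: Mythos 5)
Your proposal is correct in substance and, at bottom, is a formalization of exactly what the paper does: the paper's entire proof is the sentence ``the proof is the same,'' i.e.\ rerun Sections~\ref{sec:finite-time}--\ref{sec:hyper-theory} with $\chK$, $\psi^+$, $Q^N(x,n)=\psi^-_\omega(x,n)-\chpsi^N_\omega(x,n)$ and the stable cones in place of $K$, $\psi^-$, and the unstable cones. Your time-reversal reduction is a genuinely different (and more economical) route: it buys you the forward statement as a literal corollary of Proposition~\ref{prop:dt-localize} applied to the reversed sample point, at the price of the endpoint bookkeeping, whereas the direct rerun avoids all conjugation issues but duplicates two sections of estimates. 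Your identification of the one real asymmetry --- the terminal semi-concavity constant is $1$ for $K^{\omega,b}_{-N,0}\varphi$ but $K_0(\omega)$ for $-\chK^{\omega,b}_{0,N}\varphi$, which is what forces the tempered factor $\check C_2(\omega)$ into \eqref{eq:for-ap-conditions} --- is exactly the right thing to flag and is the only place where ``the proof is the same'' needs an actual modification.

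One concrete inaccuracy to repair if you pursue the reduction route: the identity $\sigma\circ\Phi^\omega_j\circ\sigma=(\Phi^{\omega^*}_{-j-1})^{-1}$ is false as an identity of maps. With $\Phi^G(x,v)=(x+v-\nabla G(x),\,v-\nabla G(x))$ one computes $\sigma\Phi^{F_j}\sigma(x,v)=(x-v-\nabla F_j(x),\,v+\nabla F_j(x))$ while $(\Phi^{F_j})^{-1}(x,v)=(x-v,\,v+\nabla F_j(x-v))$; the time reversal of ``kick then drift'' is ``drift then kick,'' which is not of the form $\Phi^G_m$ for any $G$. Only the compositions $\Phi^\omega_{m,n}$ over whole intervals are conjugate to reversed compositions of standard maps, and then only up to one leftover kick at one endpoint and one leftover drift at the other --- this is the same phenomenon as the one-kick mismatch in $A^{\omega,b}_{s,t}(x,y)$ versus $A^{\omega^*,-b}_{-t,-s}(y,x)$ that you already noted. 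These boundary factors are harmless for the conclusion (Proposition~\ref{prop:dt-localize} only uses times in the interior third of the interval, and the corrections are absorbed into tempered constants), but the dictionary should be stated for $\Phi^\omega_{m,n}$ and for the operators $K^{\omega,b}_{m,n}$, not for the individual maps $\Phi^\omega_j$.
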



The main idea for the proof of Proposition~\ref{prop:local-conv} is to use both the forward and backward dynamics to repeatedly upgrade the estimates. If we have a $\Apm$ condition, Proposition~\ref{prop:dt-localize} implies upgraded localization of \emph{backward} minimizers at a earlier time. This can be applied to get a better approximation of the \emph{forward} solution for the later time, obtaining an improved $\App$ condition. We then reverse time and repeat. However, due to technical reasons, we can only apply this process on a sub-interval called a good interval. 

\begin{definition}
	For $\beta>0$, we say $j \in [-N, 0]$ is a backward $\beta$-good time if for every $\varphi \in C(\T^d)$ and every $K_{-N,0}^{\omega, b}\varphi$ minimizer $(y_n, \eta_n)_{n = -N}^0$, we have 
	\[
		\|y_j - x_j^\omega\| < \beta < \rho(\theta^j\omega), \quad K^\epsilon(\theta^j \omega) < \beta^{-1}, \quad N_1(\omega) < \beta^{-1}
	\]
	where $\rho(\omega)$ is in Proposition~\ref{prop:dt-localize}. Define forward $\beta$-good time similarly, by using forward minimizers and $\check{\rho}$ from Proposition~\ref{prop:forward- dt-localize}. An interval $[n_1, n_2]$ is good if $n_2$ is backward good and $n_1$ is forward good. 
\end{definition}

\begin{figure}[t]
	\centering
	\includegraphics[width=2.5in]{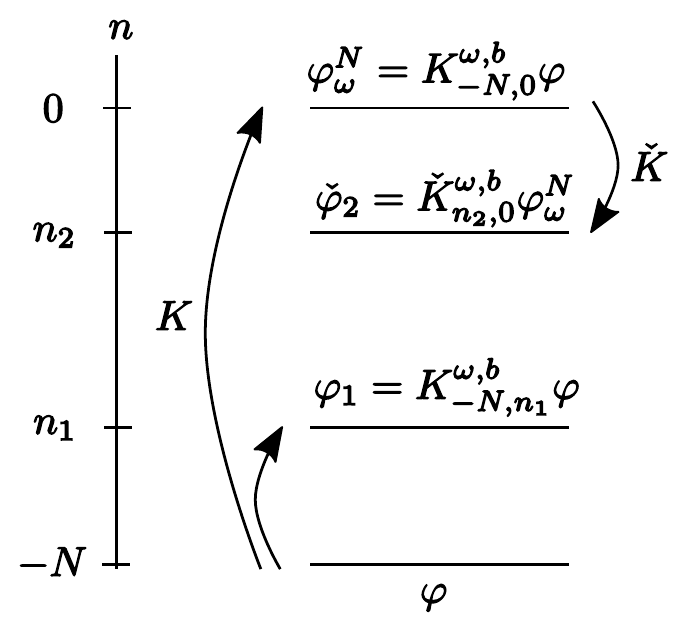}
	\caption{Notations for forward-backward iteration}\label{fig:reg-int}
\end{figure}

Write $\varphi^N_\omega = K_{-N, 0}^{\omega, b}\varphi$, note that by Corollary~\ref{cor:two-way-minimizer}, if $(y_n, \eta_n)$ is a minimizer for $K_{-N, 0}^{\omega, b}\varphi$, then it is also a minimizer for $\chK_{-N, 0}^{\omega, b}\varphi^N_\omega = \chK_{-N, 0}^{\omega, b} K_{-N, 0}^{\omega, b}\varphi$. For a $\beta-$good interval, denote
\begin{equation}
	\label{eq:def-interval}
	\begin{aligned}
		& \omega_1 = \theta^{n_1}\omega, \quad \varphi_1 = K_{-N, n_1}^{\omega, b}\varphi,                                 \\
		& \omega_2 = \theta^{n_2}\omega, \quad \chphi_2 = \chK_{n_2, 0} \varphi^N_\omega = \chK_{n_2, 0} K_{-N, 0}\varphi. 
	\end{aligned}
\end{equation}
See Figure~\ref{fig:reg-int} for a visualization of the notations.

We now describe the upgrade lemma:
\begin{lemma}\label{lem:backforw}
For $0 < \epsilon < \lambda' /12$ and $\beta>0$, there exists $N_2(\omega)>0$ such that if 
\[
	N>N_2(\omega), \quad \delta^{\frac18} < \beta/2, \quad 
	\delta \ge e^{-(\lambda' - 3\epsilon)N/3},
\]
and if $[-5N/9, -4N/9] \subset [n_1, n_2] \subset [-2N/3, -N/3]$  is a $\beta$-good time interval with regards to $[-N,0]$, we have:
\begin{enumerate}
	\item For $\omega_1, \omega_2, \varphi_1, \chphi_2$ defined in \eqref{eq:def-interval} and  $\barN = n_2 - n_1$,  if 
	\[
		(y_{j+n_2}, \eta_{j +n_2})_{j = - \barN}^0  \text{ satisfies } 
		\Apm(\omega_2, \varphi_1, \delta, \barN)
		\text{ condition,}
	\]
	then the orbit 
	\[
		(y_{j+n_1}, \eta_{j +n_1})_{j = 0}^\barN  \text{ satisfies } 
		\App(\omega_1, \chphi_2, \delta_1, \barN)
		\text{ condition,}
	\]
	where 
	\begin{equation}
		\label{eq:delta1}
		\delta_1 = \max\left\{\delta^{q - \frac14}, e^{-(\lambda' - 6\epsilon)N/54} \right\}.  
	\end{equation}
	\item If
	\[
		(y_{j+n_1}, \eta_{j +n_1})_{j = 0}^\barN  \text{ satisfies } 
		\App(\omega_1, \chphi_2, \delta, \barN)
		\text{ condition,}
	\]
	then 
	\[
		(y_{j+n_2}, \eta_{j +n_2})_{j = - \barN}^0  \text{ satisfies } 
		\Apm(\omega_2, \varphi_1, \delta_1, \barN)
		\text{ condition,}
	\]
	with $\delta_1$ given by \eqref{eq:delta1}. 
\end{enumerate}
\end{lemma}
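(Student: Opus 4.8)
The plan is to prove part~(1); part~(2) follows by the mirror-image argument — interchange backward and forward objects, $\psi^-\leftrightarrow\psi^+$, $\varphi_1\leftrightarrow\chphi_2$, $\omega_1\leftrightarrow\omega_2$, and use Proposition~\ref{prop:forward- dt-localize} in place of Proposition~\ref{prop:dt-localize}. First I would translate everything into the frame $\omega$ via \eqref{eq:time-shift}: the orbit is a fixed minimizer $(y_n,\eta_n)_{n=-N}^0$ of $K_{-N,0}^{\omega,b}\varphi$ which, restricted to $[n_1,n_2]$, is simultaneously a backward minimizer of $\varphi^N_\omega(\cdot,n_2):=K_{-N,n_2}^{\omega,b}\varphi$ and (by Corollary~\ref{cor:two-way-minimizer}) a forward minimizer of $\chphi^N_\omega(\cdot,n_1):=\chK_{n_1,0}^{\omega,b}\varphi^N_\omega$. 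Writing $\varphi^N_\omega(\cdot,m)=K_{-N,m}^{\omega,b}\varphi$ and $\chphi^N_\omega(\cdot,m)=\chK_{m,0}^{\omega,b}\varphi^N_\omega$ (the forward evolution of the backward solution), the hypothesis $\Apm(\omega_2,\varphi_1,\delta,\barN)$ says that $\varphi^N_\omega(\cdot,n)$ is centered-$\delta$-close to $\psi^-(\cdot,n)$ along the orbit for each $n$ in the last third $[n_2-\barN/3,n_2]$ with $\|y_n-x_n^\omega\|<r(\theta^n\omega)$, and the desired $\App(\omega_1,\chphi_2,\delta_1,\barN)$ is the same statement for $\chphi^N_\omega(\cdot,n)$ versus $\psi^+(\cdot,n)$ on the first third $[n_1,n_1+\barN/3]$. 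From $[-5N/9,-4N/9]\subset[n_1,n_2]\subset[-2N/3,-N/3]$ one has $N/9\le\barN\le N/3$; the two thirds are disjoint, and $n'-n\ge\barN/2\ge N/18$ whenever $n$ lies in the first and $n'$ in the last third.

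The first step is a localization. I would apply Proposition~\ref{prop:dt-localize} in the frame $\omega_2=\theta^{n_2}\omega$ to the orbit on $[-\barN,0]$ with data $\varphi_1$, after enlarging $\delta$ if necessary to $\bar\delta=\max\{\delta,e^{-(\lambda'-3\epsilon)\barN/3}\}$ (the $\Apm$ property only weakens under this). Its hypotheses \eqref{eq:ap-conditions} hold once $N_2(\omega)$ is large, from $n_2$ being backward $\beta$-good (so $\|y_{n_2}-x_{n_2}^\omega\|<\beta<\rho(\theta^{n_2}\omega)$ together with the $N_1$- and $K^\epsilon$-bounds), from $\delta^{1/8}<\beta/2$, and from temperedness. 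This produces a time $n^*\in[n_2-\barN/6,n_2)$ — hence inside the last third, with $n^*-n\ge\barN/2\ge N/18$ for $n$ in the first third — such that $\|y_{n^*}-x_{n^*}^\omega\|<\delta^*:=\max\{\bar\delta^{\,q},e^{-(\lambda'-3\epsilon)\barN/6}\}\le\max\{\delta^q,e^{-(\lambda'-3\epsilon)N/54}\}$; shrinking $\rho$ we may also assume $\delta^*<r(\theta^{n^*}\omega)$. So the orbit makes one near-passage through the global minimizer inside the last third.

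The core is to compare, for $n$ in the first third with $\|y_n-x_n^\omega\|<r(\theta^n\omega)$, the centered differences of $\chphi^N_\omega(\cdot,n)$ and of $\psi^+(\cdot,n)$ using $n^*$ as pivot. Three of the four relevant values are essentially free: $\chphi^N_\omega(y_n,n)=\chphi^N_\omega(y_{n^*},n^*)-A_{n,n^*}^{\omega,b}(y_n,y_{n^*})$ exactly (forward minimizer along the orbit, via the semigroup $\chK_{n,0}^{\omega,b}=\chK_{n,n^*}^{\omega,b}\chK_{n^*,0}^{\omega,b}$); $\psi^+(y_n,n)=\psi^+(x_{n^*}^\omega,n^*)-A_{n,n^*}^{\omega,b}(y_n,x_{n^*}^\omega)$ up to $C_1(\theta^n\omega)e^{-(\lambda'-\epsilon)(n^*-n)}$, by the forward form of Lemma~\ref{lem:phi-minus-approx}; and $\psi^+(x_n^\omega,n)=\psi^+(x_{n^*}^\omega,n^*)-A_{n,n^*}^{\omega,b}(x_n^\omega,x_{n^*}^\omega)$ exactly. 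Moreover Lemma~\ref{lem:forward-backward} at $j=n^*$ gives $\chphi^N_\omega(\cdot,n^*)\le\varphi^N_\omega(\cdot,n^*)$ with equality and common gradient $\eta_{n^*}-b$ at $y_{n^*}$, so by semi-concavity (Lemmas~\ref{lem:prop-act} and \ref{lem:grad-lip}) $\chphi^N_\omega(\cdot,n^*)$ is affine near $y_{n^*}$ with slope $\eta_{n^*}-b$ up to a $(K_0(\theta^{n^*}\omega)+1)\|\cdot-y_{n^*}\|^2$ error — exactly the linear part that $z\mapsto A_{n,n^*}^{\omega,b}(y_n,z)$ carries at $y_{n^*}$, since the orbit segment $(y_j)_{j=n}^{n^*}$ is the corresponding minimizing curve. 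Hence, after subtracting the four expressions, every linear-in-$(\eta_{n^*}-b)$ term cancels, the pivot contributions collapse to size $O\bigl(K_0(\theta^{n^*}\omega)(\delta^*)^2\bigr)$, and one is left with
\[
\bigl|(\chphi^N_\omega(y_n,n)-\chphi^N_\omega(x_n^\omega,n))-(\psi^+(y_n,n)-\psi^+(x_n^\omega,n))\bigr|\le (K_0(\theta^{n^*}\omega)+2)(\delta^*)^2+E+2C_1(\theta^n\omega)e^{-(\lambda'-\epsilon)(n^*-n)},
\]
where $E$ is the error in the bound $\chphi^N_\omega(x_n^\omega,n)\le\chphi^N_\omega(x_{n^*}^\omega,n^*)-A_{n,n^*}^{\omega,b}(x_n^\omega,x_{n^*}^\omega)+E$.

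The main obstacle is precisely that last bound. Its lower counterpart is a one-line test-point inequality, but the upper bound requires knowing that the forward minimizer of $\chphi^N_\omega(\cdot,n)$ issued from $x_n^\omega$ passes at time $n^*$ through an \emph{arbitrarily small} (not merely $O(1)$) neighbourhood of $x_{n^*}^\omega$. I would obtain this in two stages: a localization argument of the type of Proposition~\ref{prop:localize}, shifted to the frame $\theta^{n^*}\omega$, first confines that minimizer to a fixed small neighbourhood of $x_{n^*}^\omega$; then, once it is near, the non-uniform hyperbolicity of Propositions~\ref{prop:hyp}--\ref{prop:unst-stab} forces it to shadow the stable manifold and hence be exponentially close to $(x_{n^*}^\omega,v_{n^*}^\omega)$, which together with the affine (and quadratic-in-both-directions, using $C^2$-ness of the action at the non-degenerate global minimizer) control above yields $E$ of size at most a constant times $K_0(\theta^{n^*}\omega)(\delta^*)^2$ plus an exponentially small term. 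What remains is routine bookkeeping: temperedness and $|n^*-n_2|\le\barN/6$, $|n|\le 2N/3$ give $K_0(\theta^{n^*}\omega)\le\beta^{-1}e^{\epsilon N/18}$ (using the good-time bound $K^\epsilon(\theta^{n_2}\omega)<\beta^{-1}$) and $C_1(\theta^n\omega)\le C_1(\omega)e^{2\epsilon N/3}$; then $\delta^{1/8}<\beta/2$, $\delta\ge e^{-(\lambda'-3\epsilon)N/3}$, $n^*-n\ge N/18$, $\epsilon<\lambda'/12$ and $q=(\lambda'-3\epsilon)/(8\epsilon)$ make the right-hand side of the display $\le\max\{\delta^{q-1/4},e^{-(\lambda'-6\epsilon)N/54}\}=\delta_1$ once $N>N_2(\omega)$, which is the claim. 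Part~(2) is identical with the forward and backward roles exchanged.
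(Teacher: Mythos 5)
Your overall strategy is the same as the paper's: invoke Proposition~\ref{prop:dt-localize} at the backward-good time $n_2$ to produce a pivot $n^*$ (the paper calls it $k$) in the last third of $[n_1,n_2]$ where $\|y_{n^*}-x_{n^*}^\omega\|$ is $\delta^q$-small, then transfer the $\Apm$ estimate across the interval by writing the centered differences in terms of values at the pivot and appealing to the forward version of Lemma~\ref{lem:phi-minus-approx}. The one place you deviate in method is the cancellation of pivot contributions: you run a second-order argument via semi-concavity plus the max-at-$y_{n^*}$ property to get an $O((\delta^*)^2)$ remainder, whereas the paper simply uses the $K(\theta^{n^*}\omega)$-Lipschitz estimate (the first inequality string in the proof, yielding $K^\epsilon(\theta^{n_2}\omega)e^{\epsilon|n^*-n_2|}\delta^*\le\beta^{-1}\delta^{q-1/8}$). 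Your quadratic gain is real but unnecessary — with $q=(\lambda'-3\epsilon)/(8\epsilon)>9/8$ the linear estimate already produces a strictly improving exponent.

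The genuine gap is the step you yourself flag as "the main obstacle": bounding $E$ in $\chphi^N_\omega(x_n^\omega,n)\le\chphi^N_\omega(x_{n^*}^\omega,n^*)-A_{n,n^*}^{\omega,b}(x_n^\omega,x_{n^*}^\omega)+E$. This is exactly the content of the paper's Lemma~\ref{lem:middle-almost-min}, which is a separate technical statement with its own proof (via Lemma~\ref{lem:approx-guiding}). Your two-stage sketch — localize, then shadow via non-uniform hyperbolicity — is the right type of argument, but applied to the wrong orbit. You propose to track the forward minimizer for $\chphi^N_\omega(\cdot,n)$ issued from $x_n^\omega$, which is a finite-time object defined only on $[n,0]$ with an a priori uncontrolled initial momentum (differentiability of $\chphi^N_\omega(\cdot,n)$ at $x_n^\omega$ is not given, so the minimizer need not be unique or start on the stable manifold of $X_n^\omega$), and for which Proposition~\ref{prop:localize} requires a regular time in an interval whose existence you haven't established. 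The paper sidesteps all of this by introducing the \emph{guiding orbit} $(z_j^\omega)$ (Section~\ref{sec:finite-time}), a minimizer of $Q^N(\cdot,j)$ which is simultaneously a backward minimizer of $K^{\omega,b}_{-N,0}\varphi$ and a forward minimizer of $\psi^+$, living on the full interval $[-N,\infty)$; hyperbolicity then gives $\|z_j^\omega-x_j^\omega\|\le\beta^{-1}e^{-\lambda'|j+5N/6|}$ directly (Lemma~\ref{lem:approx-guiding}), and Lemma~\ref{lem:middle-almost-min} follows by Lipschitz comparison. Your argument is not unsalvageable, but as written it silently re-derives a nontrivial lemma by a route with unresolved issues; the paper's construction of the guiding orbit is precisely the device that makes this step clean.
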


The reason to require the good time interval to lie in $[-2N/3, -N/3]$ is to apply the following lemma, which says the global minimizer $x_n^\omega$ is almost a minimizer for finite time solution in the middle of the time interval: 
\begin{lemma}\label{lem:middle-almost-min}
There exists $N_3(\omega)>0$ such that if $N > N_3(\omega)$, the following holds almost surely, for arbitrary $\varphi \in C(\T^d)$ and $-2N/3 \le j < k \le -N/3$: 
\[
	\left| K_{-N, j}^{\omega, b} \varphi(x_j^\omega)  - K_{-N, k}^{\omega, b} \varphi (x_k^\omega) + A_{j, k}^{\omega, b}(x_j^\omega, x_k^\omega) \right| < e^{-(\lambda' - 3\epsilon)N/6}, 
\]
\[
	\left| \chK_{j, 0}^{\omega, b} \varphi(x_j^\omega)  - \chK_{k, 0}^{\omega, b} \varphi (x_k^\omega) + A_{j, k}^{\omega, b}(x_j^\omega, x_k^\omega) \right| < e^{-(\lambda' - 3\epsilon)N/6}. 
\]	
\end{lemma}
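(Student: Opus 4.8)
The plan is to reduce the statement to the almost‑constancy of two auxiliary functions and then exploit a monotonicity that comes for free from the Lax--Oleinik representation; the exponential smallness will ultimately come from localization plus non‑uniform hyperbolic shadowing. First note that the global minimizer $(x_n^\omega,v_n^\omega)$ is the backward minimizer of $\psi^-_\omega$ and the forward minimizer of $\psi^+_\omega$ through each $x_n^\omega$, because $v_n^\omega=\nabla\psi^-_\omega(x_n^\omega,n)+b=\nabla\psi^+_\omega(x_n^\omega,n)+b$ by Proposition~\ref{prop:glob} and these solutions are differentiable there, so Lemma~\ref{lem:min} (and its forward analogue) give
\[
  A_{j,k}^{\omega,b}(x_j^\omega,x_k^\omega)=\psi^-_\omega(x_k^\omega,k)-\psi^-_\omega(x_j^\omega,j)=\psi^+_\omega(x_k^\omega,k)-\psi^+_\omega(x_j^\omega,j).
\]
Setting $g(n)=K_{-N,n}^{\omega,b}\varphi(x_n^\omega)-\psi^-_\omega(x_n^\omega,n)$ and $\chg(n)=\chK_{n,0}^{\omega,b}\varphi(x_n^\omega)-\psi^+_\omega(x_n^\omega,n)$, the two quantities in the lemma are exactly $g(j)-g(k)$ and $\chg(j)-\chg(k)$, so it suffices to bound $|g(j)-g(k)|$ and $|\chg(j)-\chg(k)|$ by $e^{-(\lambda'-3\epsilon)N/6}$.

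Using $z=x_m^\omega$ as a competitor in $K_{-N,n}^{\omega,b}\varphi(x_n^\omega)=\min_z\{K_{-N,m}^{\omega,b}\varphi(z)+A_{m,n}^{\omega,b}(z,x_n^\omega)\}$ together with the identity above yields $g(n)\le g(m)$ for all $-N\le m<n$; symmetrically, using the $\sup$ defining $\chK$, the function $\chg$ is also non‑increasing. Hence for $-2N/3\le j<k\le -N/3$ we have $|g(j)-g(k)|=g(j)-g(k)\le g(-2N/3)-g(-N/3)$, and likewise for $\chg$. This reduces everything to the single pair $(j,k)=(-2N/3,-N/3)$; in particular the potentially awkward case of $k-j$ small disappears.

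Next I would reduce the remaining bound to a closeness estimate. Let $(y_l)_{l=-N}^{-N/3}$ be a minimizer for $K_{-N,-N/3}^{\omega,b}\varphi(x_{-N/3}^\omega)$. Splitting it at time $-2N/3$, and using $z=y_{-2N/3}$ as a competitor in the Lax--Oleinik representation of $\psi^-_\omega(x_{-N/3}^\omega,-N/3)$, one obtains $g(-2N/3)-g(-N/3)\le h(x_{-2N/3}^\omega)-h(y_{-2N/3})$ with $h:=K_{-N,-2N/3}^{\omega,b}\varphi-\psi^-_\omega(\cdot,-2N/3)$, so that
\[
  g(-2N/3)-g(-N/3)\le\bigl(K(\theta^{-N}\omega)+K(\theta^{-2N/3}\omega)\bigr)\,\|y_{-2N/3}-x_{-2N/3}^\omega\|,
\]
where the Lipschitz bound on $h$ is Lemma~\ref{lem:prop-act}; the prefactor is tempered, hence $\le e^{\epsilon'N}$ times a tempered factor for any prescribed $\epsilon'>0$. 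The forward statement reduces in the same way, with $\chK$, $\psi^+$, a forward minimizer $(z_l)_{l=-2N/3}^{0}$ of $\chK_{-2N/3,0}^{\omega,b}\varphi(x_{-2N/3}^\omega)$, and the competitor/$\sup$ swapped, to a bound on $\|z_{-N/3}-x_{-N/3}^\omega\|$.

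Thus everything comes down to showing that the minimizer $(y_l)$ (resp.\ $(z_l)$) is exponentially close to the global minimizer at time $-2N/3$ (resp.\ $-N/3$) — this is the main obstacle. The orbit $(y_l)_{l=-N}^{-N/3}$ ends exactly at $x_{-N/3}^\omega$ and has $N/3$ integer steps of room on each side of time $-2N/3$. First, Proposition~\ref{prop:localize} (with $\tilde\rho<r$), applied at reference times chosen by the Birkhoff ergodic theorem so that $M_0(\theta^{\cdot}\omega)$ stays bounded there, places $(y_l)$ inside the tube $\{\|y-x_l^\omega\|<\tilde\rho(\theta^l\omega)\}$ on a sub‑interval of $[-N,-N/3]$ whose endpoints are within $o(N)$ of $-N$ and $-N/3$. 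Then I would feed this into the non‑uniform hyperbolic shadowing near $(x_l^\omega,v_l^\omega)$: this orbit is hyperbolic by Proposition~\ref{prop:hyp}(2), with local stable and unstable manifolds of tempered size (Proposition~\ref{prop:unst-stab} together with Lemma~\ref{lem:tempered-arK}). Since $(y_l)$ stays in the tube and hits $x^\omega$ exactly at the right end, its deviation decomposes into a stable component contracted at rate $\lambda'$ forward from the left end and an unstable component contracted at rate $\lambda'$ backward from the right end, giving $\|y_l-x_l^\omega\|\le D(\omega)\bigl(e^{-\lambda'(l+N)}+e^{-\lambda'(-N/3-l)}\bigr)$ with $D$ tempered for interior $l$; at $l=-2N/3$ this is $\le D(\omega)\,e^{-\lambda'N/3}$ up to the $e^{o(N)}$ correction from the window. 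Combining with the previous step and taking $\epsilon'$ small enough, $g(-2N/3)-g(-N/3)\le D'(\omega)\,e^{(\epsilon'-\lambda'/3)N}<e^{-(\lambda'-3\epsilon)N/6}$ once $N>N_3(\omega)$, with $N_3(\omega)$ chosen to absorb the tempered factors; the forward bound is obtained identically using the forward localization part of Proposition~\ref{prop:localize}. The difficulty is concentrated entirely in this last step: making the shadowing estimate rigorous with constants tempered uniformly in $l$, and making sure the randomness in the thresholds $M_0(\theta^l\omega)$ does not eat into the $N/3$‑length of the shadowing window — precisely the point where the Pesin‑type local manifold theory and the tempered bookkeeping of Lemma~\ref{lem:tempered-arK} are needed; the first three steps are routine Lax--Oleinik manipulations.
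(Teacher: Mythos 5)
Your first three steps are correct and constitute a genuinely different, rather elegant reduction that the paper does not use. The identity $A_{j,k}^{\omega,b}(x_j^\omega,x_k^\omega)=\psi^-_\omega(x_k^\omega,k)-\psi^-_\omega(x_j^\omega,j)$ (a consequence of Proposition~\ref{prop:glob} and Lemma~\ref{lem:min}) is right, so the target quantity really is $g(j)-g(k)$ with $g(n)=K_{-N,n}^{\omega,b}\varphi(x_n^\omega)-\psi^-_\omega(x_n^\omega,n)$; the monotonicity $g(n)\le g(m)$ for $m<n$ indeed follows by using $x_m^\omega$ as a competitor, and this correctly collapses the whole family of pairs $(j,k)$ to the single pair $(-2N/3,-N/3)$. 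Your passage to $g(-2N/3)-g(-N/3)\le h(x_{-2N/3}^\omega)-h(y_{-2N/3})$ with $h=K_{-N,-2N/3}^{\omega,b}\varphi-\psi^-_\omega(\cdot,-2N/3)$ is also sound, as is the tempered Lipschitz prefactor from Lemma~\ref{lem:prop-act} and Lemma~\ref{lem:tempered-arK}.

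The genuine gap is in the last step, and it is more serious than a matter of ``tempered bookkeeping.'' To run the two-sided shadowing you describe, you need $(Y_l)=(y_l,\eta_l)$ to lie in a \emph{phase-space} tube around $(x_l^\omega,v_l^\omega)$; Proposition~\ref{prop:localize} only gives closeness in $y$, not in $\eta$. The velocity $\eta_l=\nabla K_{-N,l}^{\omega,b}\varphi(y_l)+b$ is the gradient of a semi-concave function that is \emph{not} $\psi^-_\omega$ nor $\psi^+_\omega$, so neither Proposition~\ref{prop:unst-stab} (which applies only to the backward/forward minimizers of the stationary solutions) nor Lemma~\ref{lem:grad-lip}/Lemma~\ref{lem:lipsht-est} (which require two minimizers of the \emph{same} operator, or both points in the same argmin set) gives you $\|\eta_l-v_l^\omega\|\lesssim\|y_l-x_l^\omega\|$. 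Without that upgrade, the decomposition into stable and unstable components ``contracted from both ends'' has no entry point. In the rest of the paper this exact difficulty is handled via the $\Apm$ machinery (Lemma~\ref{lem:localize-phase}, Lemma~\ref{lem:cone-contraction}), which uses a quantitative approximation of $\psi^-_\omega$ that is not yet available at the time Lemma~\ref{lem:middle-almost-min} is invoked — indeed Lemma~\ref{lem:middle-almost-min} is used to \emph{build} that machinery in Lemma~\ref{lem:backforw}, so circularity would be a concern as well.

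The paper sidesteps all of this by choosing, in place of your $(y_l)$, the guiding orbit $(z_l^\omega,\zeta_l^\omega)$ of Section~\ref{sec:guiding}. That orbit is simultaneously (i) a forward minimizer with $\zeta_l^\omega=\nabla\psi^+_\omega(z_l^\omega,l)+b$, hence lies on the graph of $\nabla\psi^+_\omega+b$, so Proposition~\ref{prop:unst-stab} applies directly and gives one-sided exponential shadowing of $(x_l^\omega,v_l^\omega)$ once the two orbits are close at a single regular time (Lemma~\ref{lem:approx-guiding}); and (ii) an exact backward minimizer for $K_{-N,0}^{\omega,b}\varphi$ (Lemma~\ref{lem:prop-fin-min}(4)), so that $\psi^N(z_j^\omega)+A_{j,k}^{\omega,b}(z_j^\omega,z_k^\omega)-\psi^N(z_k^\omega)=0$ exactly. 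Replacing $z$ by $x$ via the tempered Lipschitz estimates and the shadowing bound $\|z_k^\omega-x_k^\omega\|\le\beta^{-1}e^{-\lambda'|k+5N/6|}$ then yields the lemma. Your orbit has property (ii) (via an inequality) but not (i), and that missing structural property is exactly what makes your final shadowing step inaccessible with the tools at hand. If you replace your $(y_l)$ by the guiding orbit, keep your clever monotonicity reduction as a cosmetic improvement if you like, and invoke Lemma~\ref{lem:approx-guiding}, the argument closes cleanly.
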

The proof of the lemma is deferred to section~\ref{sec:guiding}. 

\begin{figure}[t]
	\centering
	\includegraphics[width = 2.5in]{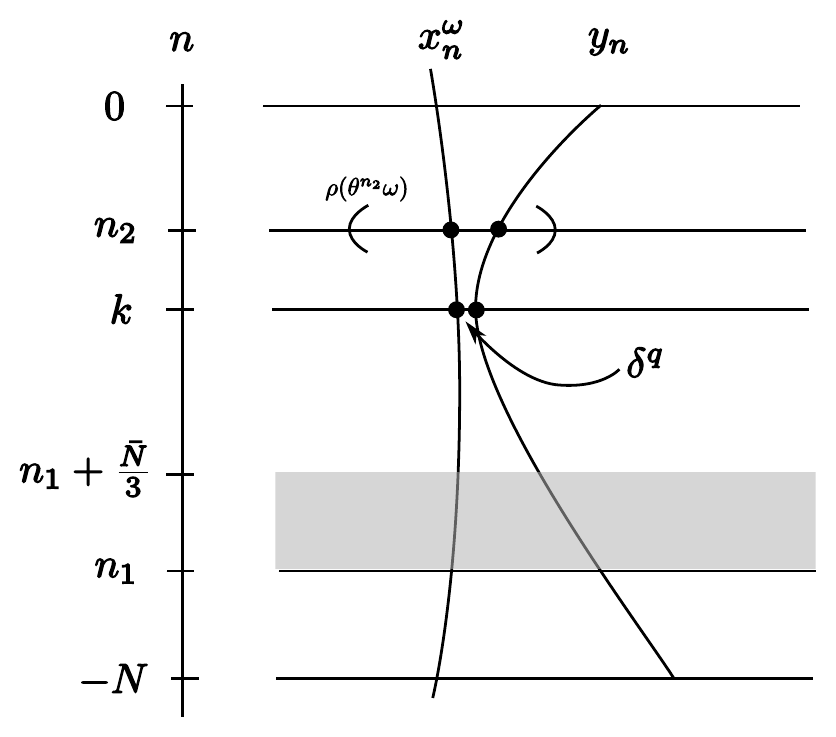}
	\caption{Proof of Lemma~\ref{lem:backforw}, part (1): Use early localization to estimate forward solutions on the gray interval}\label{fig:upgrade}
\end{figure}

\begin{proof}[Proof of Lemma~\ref{lem:backforw}]
	Refer to Figure~\ref{fig:upgrade} for an illustration of the strategy of the proof. 

	Since $n_2$ is a backward good time and $\delta^{\frac18} < \beta < \rho(\theta^{n_2}\omega) = \rho(\omega_2)$, condition \eqref{eq:ap-conditions} holds for the orbit  $(y_{j+n_2}, \eta_{j +n_2})_{j = - \barN}^0$ at the shifted time $\omega_2$, the initial condition $\varphi_1$, and interval size $\barN$.

	Therefore by Proposition~\ref{prop:dt-localize}, there exists $0 >  - n_2 \ge \max\{-\frac{\barN}6, \frac{1}{8\epsilon} \log \delta\}$, such that  
	\[
		\|y_k - x_k^\omega\| < \max\{\delta^q, e^{-(\lambda' - 3\epsilon)\barN/6}\}. 
	\]
	Also, $\barN \ge N/9 \ge \beta^{-1} \ge N_1(\omega_2)$ provided $N \ge 9 \beta^{-1}$. 
	Consider $n_1 \le j \le \barN/3 + n_1 < k$, such that $\|y_j - x_j^\omega\|<r(\theta^j \omega)$. By Corollary~\ref{cor:two-way-minimizer},
	\[
		\chK_{j, 0}\varphi^N_\omega(y_j) = \chK_{k, 0}\varphi^N_\omega(y_k) - A_{k, j}(y_k, y_j),
	\]
	then
	\begin{equation}
		\label{eq:yk-first-est}
		\begin{aligned}
			& \left|                                                                                                                                                                          
			\chK_{j, 0}\varphi^N_\omega(y_j) - \chK_{k, 0}\varphi^N_\omega(x_k^\omega) + A_{k,j}(x_k^\omega, y_j)
			\right|	 \\
			& \le \left| \chK_{k, 0}\varphi^N_\omega(y_k) - \chK_{k, 0}\varphi^N_\omega(x_k^\omega)  \right| +                                                                                
			\left| A_{k,j}(y_k, y_j) - A_{k,j}(x_k^\omega, y_j) \right| \\
			& \le K^\epsilon(\theta^k \omega) \|x_k^\omega - y_k\|                                                                                                                            
			\le K^\epsilon(\theta^{n_2}\omega) e^{\epsilon|k-n_2|} \max\left\{   \delta^q, \quad e^{-(\lambda' - 3\epsilon)\barN/6}  \right\} \\
			& \le \beta^{-1} \max\left\{   \delta^{q-\frac18}, \quad e^{-(\lambda' - 4\epsilon)\barN/6}  \right\} \le \beta^{-1} \max\{\delta^{q-\frac18}, e^{-(\lambda' -4\epsilon)N/54}\} . 
		\end{aligned}
	\end{equation}
	using Lemma~\ref{lem:lipsht-est}, and  the estimates  $e^{\epsilon |k-n_2|} \le \delta^{-\frac18}$,  $e^{\epsilon|k-n_2|} \le e^{\epsilon \barN/6}$ and $\barN \ge N/9$. 

	On the other hand, the dual version of Lemma~\ref{lem:phi-minus-approx} implies 
	\[
		\begin{aligned}
			& \left| 
			\psi^+_\omega(y_j, j) - \psi^+_\omega(x_k^\omega, k) + A_{k, j}(x_k^\omega, x_j^\omega) 
			\right| < C_1^\epsilon(\theta^j\omega) e^{-(\lambda' - \epsilon)(k-j)} \\
			& \le   C_1^\epsilon(\omega) e^{\epsilon |j|} e^{-(\lambda' - \epsilon)\barN/2} \le C_1^\epsilon(\omega) e^{2\epsilon N/3} e^{-(\lambda'-\epsilon)N/18} = C_1^\epsilon(\omega) e^{-(\lambda' - 13\epsilon)N/18}. 
		\end{aligned}
	\]
	Combine the estimates, we get 
	\[
		\begin{aligned}
			& \left|                                                       \left( \chK_{j, 0}^{\omega, b} \varphi^N_\omega (y_j) - \chK_{k, 0}^{\omega, b} \varphi^N_\omega (x_k^\omega) \right) - 
			\left( \psi^+_\omega(y_j, j) - \psi^+_\omega(x_k^\omega, k) \right)
			\right| \\
			& \le  \beta^{-1} \max\left\{   \delta^{q-\frac18}, \quad e^{-(\lambda' - 4\epsilon)N/54}  \right\} + C_1^\epsilon(\omega) e^{-(\lambda' - 13\epsilon)N/18}.                                       
		\end{aligned}\]
		We now apply Lemma~\ref{lem:middle-almost-min} to $\varphi = \varphi^N_\omega$, to replace the index $k$ with $j$:
		\begin{equation}
			\label{eq:unshifted}
			\begin{aligned}
				& \left|                                                                                                                                                                         
				\left( \chK_{j, 0}^{\omega, b} \varphi^N_\omega (y_j) - \chK_{j, 0}^{\omega, b} \varphi^N_\omega (x_j^\omega) \right) -
				\left( \psi^+_\omega(y_j, j) - \psi^+_\omega(x_j^\omega, j) \right)
				\right| \\
				& \le \beta^{-1} \max\left\{   \delta^{q-\frac18}, \quad e^{-(\lambda' - 4\epsilon)N/54}  \right\} + C_1(\omega) e^{-(\lambda' - \epsilon)N/3} + e^{-(\lambda' - 3\epsilon) N/6} \\
				& \le 2 \beta^{-1}\max\left\{\delta^{q - \frac18}, e^{-(\lambda' - 4\epsilon)N/54} \right\} < \max\{\delta^{q - \frac14}, e^{-(\lambda' - 5\epsilon)N/54}\} = \delta_1,          
			\end{aligned}
		\end{equation}
		where in the last inequality, we take $N_2(\omega)$ large enough so that 
		$C_1(\omega) e^{-(\lambda' - \epsilon)N/3} + e^{-(\lambda' - 3\epsilon) N/6} < e^{-(\lambda' - 4\epsilon)N/54}$ and $2\beta^{-1} < e^{-\epsilon N/54}$, then we use $2\beta^{-1} < \delta^{-\frac18}$. 

		Observe that by the standard semi-group property, 
		\[
			\chK_{j - n_1, \barN}^{\omega_1, b} \chphi_2 = \chK_{j-n_1, \barN}^{\theta^{n_1}\omega, b} \, \chK_{n_2, 0}^{\omega, b}\, \varphi_\omega^N 
			= \chK_{j, n_2}^{\omega, b} \chK_{n_2, 0}^{\omega, b} \varphi^N_\omega 
			= \chK_{j, 0}^{\omega, b}\varphi^N_\omega, 
		\]
		substitute into \eqref{eq:unshifted} we obtain
		$\App(\omega_1, \chphi_2, \delta_1, \barN)$. 

		We now discuss case 2. Starting with the condition $\App(\omega_1, \chphi_2, \delta, \barN)$, we obtain $0 \le k_* - n_1 \le  \max\left\{ - \frac{1}{8\epsilon} \log \delta,  \frac{N}{6}  \right\}$ such that 
		\[
			\|y_{k_*} - x_{k_*}\| \le \max\left\{ \delta^q, \quad e^{-(\lambda'- 3\epsilon)N/6} \right\}. 
		\]
		Then for all $k_* < n_2 - \barN/3 \le j \le n_2$, if $\|y_j - x_j^\omega\|< r(\theta^j \omega)$, using the fact that $(y_n, \eta_n)$ is a minimizer for $K_{-N, 0}\varphi$, similar to \eqref{eq:yk-first-est} we have
		\[
			\left| K_{-N, j}^{\omega, b}\varphi(y_j) - K_{-N, k}^{\omega, b}\varphi(x_k^\omega) + A_{j, k}^{\omega, b}(x_k^\omega, y_j) \right| \le \beta^{-1}\max\left\{ \delta^{q - \frac18}, e^{-(\lambda' - 4\epsilon)N/54} \right\}, 
		\]
		and following the same strategy as before, we get
		\[
			\left| \left( K_{-N, j}^{\omega, b}\varphi(y_j) - K_{-N, j}^{\omega, b}\varphi(x_j^\omega) \right) - (\psi^-(y_j, j) - \psi^-(x_j^\omega, j)) \right| < \delta_1.  
		\]
		Since 
		\[
			K_{-\barN, j - n_2}^{\omega_2, b}\varphi_1 = K_{n_1, j}^{\omega, b}	K_{-N, n_1}\varphi = K_{-N, j}^{\omega, b} \varphi, 
		\]
		we obtain $\Apm(\omega_2, \varphi_1, \delta_1, \barN)$. 
	\end{proof}

	To carry out the upgrading procedure, we need to show that $\beta-$good time intervals exist. 

	\begin{lemma}\label{lem:regular-interval}
	There exists $\beta_0>0$ such that, for any $0 < \beta \le \beta_0$, there exists $N_3(\omega)>0$, and for all $N> N(\omega)$ there exists a $\beta-$regular time interval $[-5N/9, -4N/9] \subset [n_1, n_2] \subset [-2N/3, -N/3]$. 
\end{lemma}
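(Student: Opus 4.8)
\emph{Strategy.} The plan is to apply Proposition~\ref{prop:localize} with the auxiliary random variable $\tilde{\rho}$ taken to be a small \emph{constant} $\beta$, and then use the Birkhoff ergodic theorem for the shift $\theta$ (ergodic, since the kicks $F^\omega_j$ are i.i.d.). The restriction $[n_1,n_2]\subset[-2N/3,-N/3]$ is precisely what makes, for every integer $j$ in the interval, both $N+j$ and $-j$ at least $N/3$; so once $N$ dominates a suitably truncated version of the (random) threshold $M_0$ of Proposition~\ref{prop:localize}, that proposition applies at time $j$ after shifting by $j$ (using $K_{m,n}^{\omega,b}\varphi=K_{m-k,n-k}^{\theta^k\omega,b}\varphi$, which follows from \eqref{eq:time-shift}, together with $x_0^{\theta^j\omega}=x_j^\omega$ from Proposition~\ref{prop:glob}).

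\emph{Choice of $\beta_0$.} Every requirement in the definition of a $\beta$-good time other than the localization of minimizers, namely $\beta<\rho(\theta^j\omega)$, $\beta<\check{\rho}(\theta^j\omega)$, $K^\epsilon(\theta^j\omega)<\beta^{-1}$, $N_1(\theta^j\omega)<\beta^{-1}$ and $\check{N}_1(\theta^j\omega)<\beta^{-1}$, is of the form $\theta^j\omega\in A_\beta$, where
\[
A_\beta=\{\zeta:\ \rho(\zeta)>\beta,\ \check{\rho}(\zeta)>\beta,\ K^\epsilon(\zeta)<\beta^{-1},\ N_1(\zeta)<\beta^{-1},\ \check{N}_1(\zeta)<\beta^{-1}\}.
\]
Since $\rho,\check{\rho}$ are a.s.\ positive and $K^\epsilon,N_1,\check{N}_1$ are a.s.\ finite, $P(A_\beta)\to1$ as $\beta\downarrow0$. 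Fix $\beta_0>0$ with $P(A_{\beta_0})>0$; as $A_\beta\supseteq A_{\beta_0}$ for $\beta\le\beta_0$, we get $P(A_\beta)>0$ for all such $\beta$.

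\emph{Localization at a single time.} Fix $0<\beta\le\beta_0$ and apply Proposition~\ref{prop:localize} with the constant (hence admissible) positive random variable $\tilde{\rho}\equiv\beta$; write $M_0^\beta(\omega)\in\N$ for the resulting a.s.\ finite random variable. Since $P(M_0^\beta\le m)\to1$ as $m\to\infty$, fix a \emph{deterministic} $m=m(\beta)$ with $P(E)>0$, where $E:=A_\beta\cap\{M_0^\beta\le m\}$. I claim that if $\theta^j\omega\in E$, $j\in[-2N/3,-N/3]$ and $N\ge3m$, then $j$ is both a backward and a forward $\beta$-good time for $[-N,0]$. Indeed $N+j\ge N/3\ge m\ge M_0^\beta(\theta^j\omega)$ and $-j\ge N/3\ge M_0^\beta(\theta^j\omega)$, so shifting any $K_{-N,0}^{\omega,b}\varphi$-minimizer by $j$ yields a $K_{-(N+j),-j}^{\theta^j\omega,b}\varphi$-minimizer which at the shifted time $0$ passes through $y_j$; Proposition~\ref{prop:localize}(1) then gives $\|y_j-x_j^\omega\|<\beta$, and the forward case is identical via Proposition~\ref{prop:localize}(2). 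The remaining inequalities defining ``good'' hold because $\theta^j\omega\in A_\beta$.

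\emph{Producing the interval, and the main obstacle.} By Birkhoff's theorem, for a.e.\ $\omega$ one has $\tfrac1n\#\{0\le k<n:\theta^{-k}\omega\in E\}\to P(E)>0$. Denoting this count $S(n)$ and differencing, $\#\{k:\ N/3\le k\le 4N/9,\ \theta^{-k}\omega\in E\}=S(\lfloor4N/9\rfloor)-S(\lceil N/3\rceil-1)=\tfrac N9P(E)+o(N)\to\infty$, and likewise for $5N/9\le k\le 2N/3$. Hence there is a random $N_3(\omega)\ge 3m(\beta)$ such that for every $N>N_3(\omega)$ we may choose an integer $n_2\in[-4N/9,-N/3]$ with $\theta^{n_2}\omega\in E$ and an integer $n_1\in[-2N/3,-5N/9]$ with $\theta^{n_1}\omega\in E$. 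Then $n_2$ is backward $\beta$-good and $n_1$ forward $\beta$-good, so $[n_1,n_2]$ is a $\beta$-good interval, and by construction $[-5N/9,-4N/9]\subset[n_1,n_2]\subset[-2N/3,-N/3]$. The only genuine difficulty is that $M_0^\beta$ is random, so the ergodic theorem by itself does not control $M_0^\beta(\theta^j\omega)$ at the times $j$ it produces; truncating $M_0^\beta$ at the deterministic level $m(\beta)$ before invoking Birkhoff is what circumvents this. (The dependence of $N_3$ on $\omega$ is unavoidable since the ergodic averages converge only almost surely and without a uniform rate, whereas $\beta_0$ is non-random.)
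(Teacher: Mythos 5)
Your proof is correct and follows essentially the same route as the paper: apply Proposition~\ref{prop:localize} with $\tilde\rho\equiv\beta$, truncate the resulting random threshold $M_0$ at a deterministic level to keep the event's probability positive, and then invoke ergodicity of the shift $\theta$ to locate times in the prescribed subintervals.

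The one place you genuinely depart from the paper's execution is the ergodic step. The paper chooses thresholds so that the probability of the regularity event exceeds $8/9$; since $[-4N/9,-N/3]$ has length $N/9$, a density of bad times below $1/9$ in all of $[-N,0]$ forces a good time into that subinterval. You instead only require $P(E)>0$ and apply Birkhoff to the difference $S(\lfloor 4N/9\rfloor)-S(\lceil N/3\rceil-1)$, showing directly that the count of good times inside the subinterval grows like $(N/9)P(E)$. Your version is slightly more robust — it avoids the need for a probability threshold tailored to the subinterval's proportional length and would extend unchanged to other subintervals. You also make explicit the time-shift step ($K_{m,n}^{\omega,b}=K_{m-j,n-j}^{\theta^j\omega,b}$ and $x_0^{\theta^j\omega}=x_j^\omega$) that the paper leaves implicit in ``Proposition~\ref{prop:localize} implies $\|y_{n_2}-x_{n_2}^\omega\|<\beta$.'' Both are sound; yours is the cleaner write-up of the same idea.
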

\begin{proof}
	Let $\beta_0>0$ be small enough that 
	\[
		P(\rho(\omega) > \beta_0, \, K^\epsilon(\omega) < \beta_0^{-1}, 
		\, N_1(\omega)< \beta_0^{-1}) > \frac{17}{18}. 
	\]
	By Proposition~\ref{prop:localize}, for any $0 < \beta \le \beta_0$, there exists $M_0(\omega)>0$ such that any minimizer $(y_n, \eta_n)_{n=-M}^N$ with $M, N > M_0(\omega)$ satisfies $\|y_0 - x_0^\omega\|< \beta$. We now choose $\beta_1 >0$ small enough such that 
	\[
		P(\rho(\omega) > \beta, \, K^\epsilon(\omega) < \beta^{-1}, 
		\, N_1(\omega)< \beta^{-1}, \, M_0(\omega) < \beta^{-1}) > \frac{8}{9}. 
	\]
	Then, there exists $N_3(\omega)>0$ such that for all $N> N_3(\omega)$ the density of $\beta-$regular $n$ in $[-N, 0]$ is larger than $\frac89$. In particular, the interval $[-4N/9, -N/3]$  must contain a regular time $n_2$. We impose $N_3(\omega) > 3 \beta_1^{-1}$, then Proposition~\ref{prop:localize} implies for any $N > N_3(\omega)$, $\|y_{n_2}- x_{n_2}^\omega\| <  \beta \le \rho(\omega)$, therefore $n_2$ is a good time. 

	Apply the same argument, by possibly choosing a different $N_3(\omega)$, we can find a forward good time $n_1$ in $[-2N/3, -5N/9]$.
\end{proof}

\begin{proof}[Proof of Proposition~\ref{prop:local-conv}]
	By Lemma~\ref{lem:regular-interval} there exists a $\beta-$good interval. 
	We first show that if for $K_{-N, 0}^{\omega, b}\varphi$ backward minimizer $(y_n, \eta_n)_{n = -N}^0$, the condition 
	\begin{equation}
		\label{eq:exp-apm}
		\Apm(\omega_2, \varphi_1, e^{- \lambda_1 N}, \bar{N})
	\end{equation}
	holds for an explicitly defined $\lambda_1$, then Proposition~\ref{prop:local-conv} follows. Indeed, we only need the estimate 
	\[
		\left|
		\psi^N_\omega(y_{n_2}, n_2) - \psi^-_\omega(y_{n_2}, n_2)
		- C(n_2, \omega, \varphi)
		\right| \le  e^{- \lambda_1 N},  
	\]
	where $C(n_2, \omega, \varphi) = \psi^N_\omega(x_{n_2}^\omega, n_2) - \psi^-_\omega(x_{n_2}^\omega, n_2)$. 

	On one hand, 
	\[
		\psi^N_\omega(y_0, 0) - \psi^-(y_0, 0) \ge \psi^N(y_{n_2}, n_2) - \psi^-(y_{n_2}, n_2) \ge C(n_2, \omega, \varphi) - e^{- \lambda_1 N},
	\]
	on the other hand, by Lemma~\ref{lem:phi-minus-approx}, 
	\[
		\begin{aligned}
			& \psi^N_\omega(y_0, 0) - \psi^-(y_0, 0) \le \psi^N(x_{n_2}^\omega, n_2) - \psi^-(x_{n_2}^\omega, n_2) + C_1(\omega) e^{-(\lambda' -\epsilon)|n_2|} \\
			& =	C(n_2, \omega, \varphi) + C_1(\omega) e^{-(\lambda' - \epsilon)N/3}                                                                             
			\le C(n_2, \omega, \varphi) + e^{-(\lambda' - 2\epsilon)N/3} 
		\end{aligned}
	\]
	if $N$ is large enough. Proposition~\ref{prop:local-conv} follows by taking $\lambda = \min\{\lambda_1, (\lambda' - 2\epsilon)/3\}$.

	We now prove \eqref{eq:exp-apm}. 
	Choose $\beta = \beta_0$ as in Lemma~\ref{lem:regular-interval}, and $\delta$ such that 
	\begin{equation}
		\label{eq:dt-initial}
		\delta^{\frac18} < \min\left\{  \rho(\omega), \beta_0/2\right\}.   
	\end{equation}
	Using Proposition~\ref{prop:backward-min}, there is $N_0(\omega)$ large enough such that for all $N_0 > N(\omega)$, all $\varphi \in C(\T^d)$, we have 
	\[
		\|K_{-N, n}^{\omega, b}\varphi - \psi^-_\omega(\cdot, n)\|_* < \delta, \quad -2N/3 \le n \le 0.
	\]
	In particular,  for any minimizer $(y_n, \eta_n)$, we have
	\[
		(y_{j+n_2}, \eta_{j +n_2})_{j = - \barN}^0  \text{ satisfies } 
		\Apm(\omega_2, \varphi_1, \delta, \barN)
		\text{ condition.}
	\]
	Apply Lemma~\ref{lem:backforw}, obtain 
	\[
		(y_{j+n_1}, \eta_{j +n_1})_{j = 0}^\barN  \text{ satisfies } 
		\App(\omega_1, \chphi_2, \delta_1, \barN)
		\text{ condition,}
	\]
   with $\delta_1 = \max\{\delta^{q - \frac14}, e^{-(\lambda' - 5\epsilon)N/54} \}$. 

	Now we are going to apply Lemma~\ref{lem:backforw} repeatedly, from $\Apm$ to $\App$ and back until a desired estimate for $\delta$ is achieved. We shall assume thta $N > N_2(\omega)$. On the first step we get an estimate $\App(\omega_1, \varphi_2, \delta_1, \barN)$ for $(y_{j+n_2}, \eta_{j+n_2})_{j = - \barN}^0$ where $\delta_1 = \max\{\delta^{q - \frac14}, e^{-(\lambda' - 5\epsilon)N/54}\}$. Since $q - \frac14 >1$ this estimate is an improvement of $\delta$ unless $\delta < e^{-(\lambda' - 5\epsilon)N/54}$. Notice that if this happens we have already proven our statement with $\lambda_1 = (\lambda' - 5\epsilon)/54$. It is easy to see that the level $\delta  < e^{-(\lambda' - 5\epsilon)N/54}$ will be reached in a finite number of steps depending on $N$. Notice $N$ is large enough but fixed, this finishes the proof.  
\end{proof}

\section{Properties of the finite time solutions}
\label{sec:finite-time}

We have proven all our statements except Proposition~\ref{prop:dt-localize}, which we prove in the next two sections. 
\subsection{The guiding orbit}\label{sec:guiding}

For $N \in \N$,  denote 
\[
	\psi^N_\omega(x, n) = K_{-N, n}^{\omega,b}\varphi(x), \quad -N \le n \le 0. 
\]
We define
\begin{equation}
	\label{eq:def-QN}
	Q^N_\omega(x,n) = \psi^N_\omega(x,n) - \psi^+_\omega(x, n), \quad - N \le n \le 0,
\end{equation}
which is a finite time analog of $Q^\infty_\omega$. (Again, the subscript $\omega$ may be dropped). 

The function $Q^N$ is a Lyapunov function for minimizers, in the following sense:
\begin{lemma}\label{lem:QN-lya}
Let $(y_n, \eta_n)_{n =- N}^0$ be a minimizer for $K_{-N, 0}^{\omega, b}\varphi(y_0)$ (will use $\psi^N(x,0)$ from now on). Then for all $-N \le j < k \le 0$, 
\[
	Q^N(y_j, j) \le Q^N(y_k, k). 
\]
\end{lemma}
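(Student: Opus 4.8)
\textbf{Proof plan for Lemma~\ref{lem:QN-lya}.}
The plan is to show the monotonicity of $Q^N$ along a minimizer by combining two ingredients: the additivity of the action along the minimizer for the backward solution $\psi^N$, and the variational inequality defining $\psi^+_\omega$ via the forward operator $\chK^{\omega,b}$. First I would reduce to the one-step case $k = j+1$ (or more simply to an arbitrary pair $j<k$ directly, since the action function is additive along the minimizer at all integer times). For the backward part, because $(y_n,\eta_n)$ is a minimizer for $K_{-N,0}^{\omega,b}\varphi$, Lemma~\ref{lem:min} (applied to the sub-orbit) gives the exact identity
\[
	\psi^N_\omega(y_k, k) = \psi^N_\omega(y_j, j) + A_{j,k}^{\omega,b}(y_j, y_k).
\]

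For the forward part, I would use the stationarity of $\psi^+_\omega$ under $\chK^{\omega,b}$, i.e. $\chK^{\omega,b}_{j,k}\psi^+_\omega(\cdot,k) = \psi^+_\omega(\cdot,j)$ for $j<k$. By definition of the forward operator,
\[
	\psi^+_\omega(y_j, j) = \chK^{\omega,b}_{j,k}\psi^+_\omega(y_j, j) = \sup_{z}\left\{ \psi^+_\omega(z,k) - A_{j,k}^{\omega,b}(y_j, z) \right\} \ge \psi^+_\omega(y_k, k) - A_{j,k}^{\omega,b}(y_j, y_k),
\]
taking $z = y_k$ in the supremum. Rearranging, $A_{j,k}^{\omega,b}(y_j, y_k) \ge \psi^+_\omega(y_k,k) - \psi^+_\omega(y_j,j)$.

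Combining the two displays: $\psi^N_\omega(y_k,k) - \psi^N_\omega(y_j,j) = A_{j,k}^{\omega,b}(y_j,y_k) \ge \psi^+_\omega(y_k,k) - \psi^+_\omega(y_j,j)$, which rearranges to $Q^N(y_k,k) - Q^N(y_j,j) = (\psi^N_\omega(y_k,k) - \psi^+_\omega(y_k,k)) - (\psi^N_\omega(y_j,j) - \psi^+_\omega(y_j,j)) \ge 0$, as desired. I do not anticipate a serious obstacle here — this is the finite-time analog of the infinite-time Lyapunov property \eqref{eq:inf-lya} quoted from \cite{KZ12}, and the only points to be careful about are: (i) making sure the minimizer identity for $\psi^N$ holds on the sub-interval $[j,k]$ rather than just $[-N,0]$, which follows because a minimizer restricted to a subinterval is still a minimizer and Lemma~\ref{lem:min}/the semigroup property gives $\psi^N_\omega(y_k,k) = K^{\omega,b}_{j,k}\psi^N_\omega(\cdot,j)(y_k) = \psi^N_\omega(y_j,j) + A_{j,k}^{\omega,b}(y_j,y_k)$ exactly along the minimizer; and (ii) confirming the sign/orientation conventions in the definition of $\chK^{\omega,b}$ and of the action match up so that the supremum inequality points the right way. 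Both are bookkeeping rather than substantive difficulties.
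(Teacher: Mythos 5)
Your proposal is correct and follows essentially the same argument as the paper: use the additivity of the action along the minimizer for $\psi^N$, and combine it with the variational inequality $\chK^{\omega,b}_{j,k}\psi^+_\omega(\cdot,k)(y_j) \ge \psi^+_\omega(y_k,k) - A_{j,k}^{\omega,b}(y_j,y_k)$ together with the stationarity $\chK^{\omega,b}_{j,k}\psi^+_\omega(\cdot,k) = \psi^+_\omega(\cdot,j)$. The paper chains these same two ingredients into a single displayed inequality starting from $Q^N(y_k,k)$, but the content is identical to yours.
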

\begin{proof}
	By definition, 
	\[
		\begin{aligned}
			& Q^N(y_k, k) = \psi^N(y_k, k) - \psi^+(y_k, k) =                                   
			\psi^N(y_j, j) + A_{j, k}^{\omega, b}(y_j, y_k) - \psi^+(y_k, k) \\
			& \ge \psi^N(y_j, j) - \left( \chK_{j,k}^{\omega, b}\psi^+(\cdot, k) \right)(y_j) = 
			\psi^N(y_j, j) - \psi^+(y_j, j) = Q^N(y_j, j). 
		\end{aligned}	
	\]
\end{proof}

Let
\[
	z_{-N}^\omega \in \argmin_z Q^N(z, -N),
\]
and define $(z_n^\omega, \zeta_n^\omega)_{n = -N}^\infty$ to be a forward minimizer starting from $z_{-N}^\omega$. The orbit $(z_n^\omega, \zeta_n^\omega)$ plays the role of the global minimizer $(x_n^\omega, v_n^\omega)$ in the finite time set up, and is called the \emph{guiding orbit}. The choice of $z_N^\omega$ may not be unique, but our analysis will not depend on the choice of $z_N^\omega$. 

\begin{lemma}\label{lem:prop-fin-min} The guiding orbit has the following properties. 
\begin{enumerate}
	\item 
	\[
		z_n^\omega \in \argmin_z Q^N(z, n), \quad -N \le n \le 0.
	\]
	\item 
	\[
		\zeta_n^\omega = \nabla \psi^N(z_n^\omega, n) +b  =  \nabla \psi^+(z_n^\omega, n) + b, \quad - N \le n \le 0.
	\]
	where both gradients exists. 
	\item 
	\[
		Q^N(z_j^\omega, j) = Q^N(z_k^\omega, k), \quad -N \le j < k \le 0. 
	\]
	\item $z_k^\omega$, $-N \le k \le 0$ is a backward minimizer for $K_{-N, 0}^{\omega,b} \varphi$. 
\end{enumerate}
\end{lemma}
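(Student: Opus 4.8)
The plan is to exploit the fact that the guiding orbit $(z_n^\omega,\zeta_n^\omega)$ is by construction a forward minimizer, combined with the Lyapunov property of $Q^N$ (Lemma~\ref{lem:QN-lya}) and the variational characterization of the forward solution $\psi^+_\omega$. The key observation is that $z_{-N}^\omega$ minimizes $Q^N(\cdot,-N)$ and the forward minimizer through it keeps $Q^N$ from increasing, while $Q^N$ is bounded below by its value at the minimum at every time slice; so $Q^N(z_n^\omega,n)$ is forced to be \emph{constant} and equal to $\min_z Q^N(z,n)$ for all $-N\le n\le 0$.

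First I would prove (3) and (1) together. Since $(z_n^\omega,\zeta_n^\omega)$ is a forward minimizer and $\psi^N(\cdot, n) = K^{\omega,b}_{-N,n}\varphi$ satisfies the semigroup relation $K^{\omega,b}_{j,k}\psi^N(\cdot,j) = \psi^N(\cdot,k)$ for $-N\le j<k\le 0$, the argument of Lemma~\ref{lem:QN-lya} applies verbatim to the orbit segment $(z_n^\omega,\zeta_n^\omega)_{n=-N}^0$ (this uses that a forward minimizer is in particular a minimizer on every finite subinterval, and that $A^{\omega,b}_{j,k}(z_j,z_k) = \bA^{\omega,b}(z|_{[j,k]})$), giving $Q^N(z_j^\omega,j)\le Q^N(z_k^\omega,k)$ for $j<k$. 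On the other hand, for any minimizer $(y_n,\eta_n)_{n=-N}^0$ of $\psi^N(\cdot,0)$ with $y_{-N}=z_{-N}^\omega$ we may take $(y_n)=(z_n^\omega)$ on $[-N,0]$; more to the point, for \emph{any} $k\in[-N,0]$ and any $z$, run a backward minimizer from $(z,\cdot)$ back to time $-N$; its value of $Q^N$ at time $-N$ is at most $Q^N(z,k)$ by Lemma~\ref{lem:QN-lya}, and is at least $\min_z Q^N(z,-N) = Q^N(z_{-N}^\omega,-N)$ by the definition of $z_{-N}^\omega$. Hence $\min_z Q^N(z,k) \ge Q^N(z_{-N}^\omega,-N)$. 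Combining with the monotonicity along the guiding orbit, $Q^N(z_{-N}^\omega,-N)\le Q^N(z_k^\omega,k)$, and since $Q^N(z_k^\omega,k)\ge \min_z Q^N(z,k)\ge Q^N(z_{-N}^\omega,-N)$, all inequalities are equalities: $Q^N(z_k^\omega,k)=\min_z Q^N(z,k)=Q^N(z_{-N}^\omega,-N)$. This proves (1) and (3) simultaneously.

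Next, (4) follows because $(z_n^\omega,\zeta_n^\omega)_{n=-N}^0$, being a segment of a forward minimizer, realizes the action $A^{\omega,b}_{j,k}(z_j^\omega,z_k^\omega)=\bA^{\omega,b}(z^\omega|_{[j,k]})$ for all $-N\le j<k\le 0$, which is exactly the definition of a minimizer on $[-N,0]$; to see it is a \emph{backward} minimizer for $K^{\omega,b}_{-N,0}\varphi$ in the sense of Lemma~\ref{lem:min}, I would verify $K^{\omega,b}_{-N,0}\varphi(z_0^\omega)=\varphi(z_{-N}^\omega)+A^{\omega,b}_{-N,0}(z_{-N}^\omega,z_0^\omega)$ — but wait, this identity is not automatic, so the cleaner route is to note that since $z_0^\omega\in\argmin_z Q^N(z,0)$ by (1), and $Q^N(\cdot,0)=\psi^N(\cdot,0)-\psi^+(\cdot,0)$ with $\psi^N(\cdot,0)$ being $1$-semiconcave and $-\psi^+(\cdot,0)$ being semiconcave (Lemma~\ref{lem:prop-act}), Lemma~\ref{lem:grad-lip} gives differentiability of both at $z_0^\omega$ and a minimizer of $\psi^N(\cdot,0)$ through $z_0^\omega$; by uniqueness of the minimizer at a point of differentiability (Lemma~\ref{lem:min}), that minimizer must coincide with the backward extension of $(z_0^\omega,\zeta_0^\omega)$, which is $(z_n^\omega)$ itself — this needs $\zeta_0^\omega=\nabla\psi^N(z_0^\omega,0)+b$, i.e.\ part of (2).

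Finally, (2): at any $n$ with $-N\le n\le 0$, $z_n^\omega\in\argmin_z Q^N(z,n)=\argmin_z\{\psi^N(z,n)-\psi^+(z,n)\}$. Applying Lemma~\ref{lem:grad-lip} with $\varphi_1=\psi^N(\cdot,n)$ (which is $1$-semiconcave) and $\chphi_2=\psi^+(\cdot,n)$ (with $-\chphi_2$ semiconcave by Lemma~\ref{lem:prop-act}), both functions are differentiable at $z_n^\omega$ and $\nabla\psi^N(z_n^\omega,n)=\nabla\psi^+(z_n^\omega,n)$. Then Lemma~\ref{lem:min} identifies $\zeta_n^\omega$ (the velocity of the forward minimizer through $z_n^\omega$, equivalently the backward minimizer since they agree on differentiability points) with $\nabla\psi^+(z_n^\omega,n)+b$ on the forward side, and the same common gradient gives $\nabla\psi^N(z_n^\omega,n)+b$ — here I should be careful that $\zeta_n^\omega$ is the velocity $\dot z^\omega(n-)$, and check the two Lax–Oleinik sidedness conventions (backward operator: $v_n=\nabla\psi+b$; forward operator: $v_m=\nabla\psi+b$) match up at an interior-in-time point, which they do since the orbit is a minimizer on a neighborhood of $n$ in both directions. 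The main obstacle I anticipate is precisely this last bookkeeping — reconciling that a single orbit is simultaneously a backward minimizer for $\psi^N$ and a forward minimizer for $\psi^+$, and that the velocity conventions of the backward operator $K$ and forward operator $\chK$ in Lemma~\ref{lem:min} agree at $z_n^\omega$; everything else is a short chain of applications of Lemma~\ref{lem:QN-lya}, Lemma~\ref{lem:grad-lip}, and Lemma~\ref{lem:min}.
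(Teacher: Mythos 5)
Your proof of items (1) and (3) contains a sign error that breaks the whole argument. You assert that ``the argument of Lemma~\ref{lem:QN-lya} applies verbatim'' to the guiding orbit because it is a forward minimizer, concluding $Q^N(z_j^\omega,j)\le Q^N(z_k^\omega,k)$ for $j<k$. This is not justified: the argument of Lemma~\ref{lem:QN-lya} uses the calibration identity $\psi^N(y_k,k)=\psi^N(y_j,j)+A^{\omega,b}_{j,k}(y_j,y_k)$, which is precisely the statement that $(y_n)$ is a backward minimizer for $K^{\omega,b}_{-N,\cdot}\varphi$; a curve that merely minimizes action between its own endpoints (which is all we know about $(z_n^\omega)$ so far) does not satisfy it. Establishing that the guiding orbit \emph{is} such a calibrated backward minimizer is exactly item (4), so your argument is circular. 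What the forward-minimizer property actually gives is the opposite inequality: from the calibration $\psi^+(z_j^\omega,j)=\psi^+(z_k^\omega,k)-A^{\omega,b}_{j,k}(z_j^\omega,z_k^\omega)$ together with the generic inequality $\psi^N(z_k^\omega,k)\le\psi^N(z_j^\omega,j)+A^{\omega,b}_{j,k}(z_j^\omega,z_k^\omega)$ one gets $Q^N(z_j^\omega,j)\ge Q^N(z_k^\omega,k)$ for $j<k$.

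This sign matters. Your second half of the argument correctly shows $\min_z Q^N(z,k)\ge Q^N(z_{-N}^\omega,-N)$ by running backward minimizers from an arbitrary $z$ and applying Lemma~\ref{lem:QN-lya}. But combined with your claimed $Q^N(z_{-N}^\omega,-N)\le Q^N(z_k^\omega,k)$, you only get $Q^N(z_{-N}^\omega,-N)\le\min_z Q^N(z,k)\le Q^N(z_k^\omega,k)$ together with the redundant $Q^N(z_{-N}^\omega,-N)\le Q^N(z_k^\omega,k)$; nothing forces equality. With the corrected direction $Q^N(z_{-N}^\omega,-N)\ge Q^N(z_k^\omega,k)$ the chain closes and everything equals $\min_z Q^N(z,k)$, which is the paper's argument; moreover, saturation of $\psi^N(z_k^\omega,k)\le\psi^N(z_j^\omega,j)+A^{\omega,b}_{j,k}(z_j^\omega,z_k^\omega)$ in that computation is precisely what delivers item (4) with no further work, after which (2) follows from Lemma~\ref{lem:grad-lip} as you indicate. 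As written, your proofs of (2) and (4) rest on (1) and (3), so the gap propagates.
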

\begin{proof}
	We prove (3) first. Since $(z_j^\omega)_{j \ge -N}$ is a forward minimizer, we have 
	\begin{equation}
		\label{eq:QN-eq}
		\begin{aligned}
			& Q^N(z_j^\omega, j) = \psi^N(z_j^\omega, j) - \psi^+(z_j^\omega, j) 
			= \psi^N(z_j^\omega, j) - \psi^+(z_k^\omega, k) + A_{j,k}^{\omega, b}(z_j^\omega, z_k^\omega) \\
			& \ge \psi^N(z_k, k) - \psi^+(z_k^\omega, k) = Q^N(z_k^\omega, k).   
		\end{aligned}
	\end{equation}
	On other other hand, let $y_k \in \argmin Q^N(\cdot, k)$, and let $(y_n)_{n=-N}^k$ be a minimizer for $\varphi$ ending at $y_k$. Then by an argument similar to Lemma~\ref{lem:QN-lya}, for any $y \in \T^d$,
	\begin{equation}
	  \label{eq:QNy-zj}
	  Q^N(y, k) \ge Q^N(y_k, k) \ge Q^N(y_{-N}, -N) \ge Q^N(z_{-N}^\omega, -N). 
	\end{equation}
	In particular, taking $y = z_k^\omega$, we have $Q^N(z_k^\omega, k) \ge Q^N(z_{-N}^\omega, -N)$. Using \eqref{eq:QN-eq} for $j = -N$, we get $Q^N(z_{-N}^\omega, -N) = Q^N(z_k^\omega, k)$ which implies (3). 

	This also implies that \eqref{eq:QN-eq} is in fact an equality, therefore $\psi^N(z_j^\omega, j) = \psi^N(z_k^\omega) - A_{j,k}^{\omega, b}(z_j^\omega, z_k^\omega)$ and (4) follows. 

	Using again \eqref{eq:QNy-zj}, we have 
	\[
		\min_y Q^N(y, k) \ge Q^N(z_{-N}^\omega, -N) = Q^N(z_k^\omega, k)
	\]
	which implies (1). Finally, since $\psi^N(\cdot, n)$ is a semi-concave function for $n \ge -N$ and $\psi^+$ is semi-convex, (2) follows from Lemma~\ref{lem:grad-lip}. 
\end{proof}

Combine (3) of Lemma~\ref{lem:prop-fin-min} with Lemma~\ref{lem:QN-lya},  we get 
\begin{equation}
	\label{eq:lyapunov}
	Q^N(y_j, j) - Q^N(z_j^\omega, j)  \le Q^N(y_k, k) - Q^N(z_k^\omega, k).
\end{equation}

\subsection{Regular time and localization of the guiding orbit}
\label{sec:local-guiding}

We use the idea of regular time again. Let   $0 < \beta <1$ be such that 
\[
	P(C(\omega) < \beta^{-1}, r(\omega) > \beta) > \frac{23}{24},
\]
with $C(\omega), r(\omega)$ from Proposition~\ref{prop:unst-stab}. Let $M_0(\omega)$ be the random variable given by Proposition~\ref{prop:localize} with $\tilde{\rho} = \beta$. Let $\beta_1 >0$ be such that $P(M_0(\omega)>\beta_1^{-1})< 1/24$.  $n$ is called regular if 
\[
	C(\theta^n \omega) < \beta^{-1}, \quad r(\theta^n \omega) > \beta,
	\quad M_0(\omega) < \beta_1^{-1},
\]
using same proof as Lemma~\ref{lem:regular-interval}, we get 
\begin{lemma}\label{lem:regular}
There exists $N_1(\omega)>0$ such that for all $N > N_1(\omega)$, there exists a regular time in each time interval of size at least $N/12$ contained in $[-N, 0]$. 
\end{lemma}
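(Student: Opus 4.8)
The plan is to reproduce the density argument already used in the proof of Lemma~\ref{lem:regular-interval}: the only substantive input is Birkhoff's pointwise ergodic theorem for the time shift $\theta$, which is measure-preserving and ergodic because the potentials $F_j^\omega$ form an i.i.d.\ sequence (this is the same ergodicity invoked in the proof of Theorem~\ref{thm:exp-conv}).

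First I would pin down the constants. By Proposition~\ref{prop:unst-stab} the variables $C(\omega)$ and $r(\omega)$ are almost surely finite and positive, and by Proposition~\ref{prop:localize} (applied with $\tilde\rho(\omega)\equiv\beta$) the variable $M_0(\omega)$ is almost surely finite; hence, as arranged just before the statement, one may fix $\beta\in(0,1)$ with $P(C(\omega)<\beta^{-1},\,r(\omega)>\beta)>23/24$ and then $\beta_1>0$ with $P(M_0(\omega)>\beta_1^{-1})<1/24$. Let $E$ be the event that $0$ is a regular time, i.e.
\[
	E = \{\, C(\omega) < \beta^{-1} \,\}\cap\{\, r(\omega) > \beta \,\}\cap\{\, M_0(\omega) < \beta_1^{-1} \,\},
\]
so that $n$ is regular precisely when $\theta^n\omega\in E$, and the two probability bounds give $P(E^c)<1/24+1/24=1/12$. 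Set $\delta_0=\tfrac12\big(\tfrac1{12}-P(E^c)\big)>0$, a non-random number.

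Next I would apply Birkhoff's theorem to $\mathbf{1}_{E^c}$: almost surely
\[
	\frac1N\,\#\{\, n\in[-N,-1]\cap\Z :\ \theta^n\omega\in E^c \,\}\;\longrightarrow\;P(E^c)\qquad(N\to\infty),
\]
so there is a random $N_1(\omega)>0$, which I also take larger than $\delta_0^{-1}$, such that for every $N>N_1(\omega)$ the number of non-regular integers in $[-N,0]$ is at most $(\tfrac1{12}-\delta_0)N<\tfrac{N}{12}-1$. Any subinterval of $[-N,0]$ of length at least $N/12$ contains at least $N/12-1$ integers, and by the previous line not all of them can be non-regular; hence it contains a regular time, which is the asserted conclusion.

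There is no genuine obstacle here. The one spot needing a moment of care is precisely the passage from an asymptotic density bound to a regular point in \emph{every} window of length $N/12$: this requires the actual count of bad times in $[-N,0]$ to lie strictly below $N/12$, not merely the limiting density below $1/12$, which is why the slack $\delta_0>0$ is kept and $N_1(\omega)$ enlarged to absorb the $-1$ coming from integer rounding.
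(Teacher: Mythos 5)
Your proof is correct and follows essentially the same route as the paper, which itself defers to the Birkhoff-density argument in Lemma~\ref{lem:regular-interval}: bound $P(E^c)<1/12$, apply the ergodic theorem to get the asymptotic frequency of non-regular times below $1/12$, and conclude that every window of length $N/12$ must contain a regular time. The explicit slack $\delta_0$ and the integer-counting you add are precisely the bookkeeping the paper leaves implicit.
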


\begin{lemma}\label{lem:approx-guiding}
There exists $N_4(\omega)>0$ depending on $\beta$, $\epsilon$ such that for all  $N > N_4(\omega)$, and  $- 5N/6 \le  k \le 0$, we have 
\[
	\| z_k^\omega - x_k^\omega\| \le \beta^{-1} e^{-\lambda'|k + 5N/6|}.
\]
By possibly enlarging $N_4(\omega)$, we have 
\[
	0 \le \psi^N(x_j^\omega) + A_{j, k}^{\omega, b}(x_j^\omega, x_k^\omega) - \psi^N(x_k^\omega) \le e^{-(\lambda' - 3\epsilon)N/6}, 
	\quad -2N/3 \le j < k \le 0. 
\]
\end{lemma}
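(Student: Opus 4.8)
The plan is to exploit the non-uniform hyperbolicity of the global minimizer $(x_n^\omega,v_n^\omega)$ together with the fact that the guiding orbit $(z_n^\omega,\zeta_n^\omega)$ is a \emph{forward} minimizer, and then convert the resulting orbit-wise closeness into the stated Lyapunov-type inequality. First I would observe that, by Lemma~\ref{lem:regular}, for $N$ large there is a regular time $n_*\in[-N,-5N/6+N/12]$, say in the interval $[-11N/12,-5N/6]$; at a regular time we have $C(\theta^{n_*}\omega)<\beta^{-1}$ and $r(\theta^{n_*}\omega)>\beta$. The key point is that $z_{n_*}^\omega$ is, up to the uncontrolled constant, \emph{already} close to $x_{n_*}^\omega$: indeed, applying Proposition~\ref{prop:localize} with $\tilde\rho=\beta$ (whose $M_0(\omega)$ was built into the definition of regular time) to the finite-time minimizer $z_k^\omega$ of $K_{-N,0}^{\omega,b}\varphi$ (which is a minimizer by Lemma~\ref{lem:prop-fin-min}(4)), one gets $\|z_0^\omega-x_0^\omega\|<\beta$, and more to the point, applying it with the window re-centered at $\theta^{n_*}\omega$ gives $\|z_{n_*}^\omega-x_{n_*}^\omega\|<\beta<r(\theta^{n_*}\omega)$ once $N$ is large enough that both half-lengths of the time interval around $n_*$ exceed $M_0$.

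Once $z_{n_*}^\omega$ lies in the $r(\theta^{n_*}\omega)$-neighborhood of $x_{n_*}^\omega$, I invoke Proposition~\ref{prop:unst-stab}(2) in its forward form at the base point $\theta^{n_*}\omega$: since $(z_k^\omega,\zeta_k^\omega)_{k\ge -N}$ is the forward minimizer through $z_{n_*}^\omega$, we obtain
\[
	\|(z_k^\omega,\zeta_k^\omega)-(x_k^\omega,v_k^\omega)\|\le C(\theta^{n_*}\omega)\,e^{-\lambda'(k-n_*)}\le \beta^{-1}e^{-\lambda'(k-n_*)},\qquad k\ge n_*.
\]
Because $n_*\le -5N/6$, for $k\ge -5N/6$ we have $k-n_*\ge k+5N/6\ge |k+5N/6|$ (the quantity is nonnegative in this range), which yields exactly $\|z_k^\omega-x_k^\omega\|\le\beta^{-1}e^{-\lambda'|k+5N/6|}$ for $-5N/6\le k\le 0$. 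Here $N_4(\omega)$ is chosen so that $N>N_4(\omega)$ forces $N>N_1(\omega)$ (to have the regular time), $N/12-(\text{gap to }-5N/6)$ and $N+n_*$ both exceed $M_0(\omega)$, and any other lower bounds needed below; it is finite a.s. because all the random variables involved are a.s. finite.

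For the second assertion, the strategy is to sandwich $\psi^N(x_k^\omega)$ between two quantities close to $\psi^N(x_j^\omega)+A_{j,k}^{\omega,b}(x_j^\omega,x_k^\omega)$. The lower bound $\psi^N(x_k^\omega)\le\psi^N(x_j^\omega)+A_{j,k}^{\omega,b}(x_j^\omega,x_k^\omega)$ is just the definition of the Lax--Oleinik operator (sub-optimality of the path through $x_j^\omega$), giving the left inequality $0\le\cdots$. For the upper bound I use the guiding orbit: $z_k^\omega$ is a backward minimizer for $K_{-N,0}^{\omega,b}\varphi$ (Lemma~\ref{lem:prop-fin-min}(4)), so $\psi^N(z_k^\omega,k)=\psi^N(z_j^\omega,j)+A_{j,k}^{\omega,b}(z_j^\omega,z_k^\omega)$ exactly, and then I replace $z_j^\omega,z_k^\omega$ by $x_j^\omega,x_k^\omega$ everywhere, paying a Lipschitz error. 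Concretely, using that $\psi^N(\cdot,m)$ and $x\mapsto A_{j,k}^{\omega,b}(x,\cdot)$, $A_{j,k}^{\omega,b}(\cdot,x)$ are Lipschitz with the tempered constants $K(\theta^\bullet\omega)$ from Lemma~\ref{lem:prop-act}, and that by the first part $\|z_m^\omega-x_m^\omega\|\le\beta^{-1}e^{-\lambda'|m+5N/6|}\le\beta^{-1}e^{-\lambda' N/6}$ for $j,k\in[-2N/3,0]$ (since then $|m+5N/6|\ge N/6$), the total error is bounded by a tempered constant times $e^{\epsilon N}\cdot\beta^{-1}e^{-\lambda'N/6}$, which is $\le e^{-(\lambda'-3\epsilon)N/6}$ once $N>N_4(\omega)$ (enlarging $N_4$ and absorbing the tempered prefactors via $K^\epsilon(\omega)e^{\epsilon|m|}$-type bounds as in the proof of Lemma~\ref{lem:phi-minus-approx}).

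The main obstacle I anticipate is bookkeeping the tempered random constants and the loss of $e^{\epsilon N}$ factors: one must be careful that the Lipschitz constants $K(\theta^m\omega)$ appearing at times $m\in[-2N/3,0]$ are controlled by $K^\epsilon(\omega)e^{\epsilon|m|}\le K^\epsilon(\omega)e^{2\epsilon N/3}$ via Lemma~\ref{lem:tempered-arK}, and that this does not overwhelm the gain $e^{-\lambda'N/6}$ — which forces the exponent $(\lambda'-3\epsilon)/6$ rather than $\lambda'/6$, and is exactly why $\epsilon$ must be small relative to $\lambda'$. A secondary subtlety is making sure the regular time $n_*$ can be located in $[-11N/12,-5N/6]$ with enough room on both sides for Proposition~\ref{prop:localize} to apply (two-sided windows of length $\ge M_0(\omega)$), which just requires $N$ large compared to $M_0(\omega)$; everything else is routine.
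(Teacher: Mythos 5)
Your proof follows the paper's argument essentially step for step: locate a regular time $n_*$ in $[-11N/12,-5N/6]$ via Lemma~\ref{lem:regular}, use Proposition~\ref{prop:localize} at the shifted base $\theta^{n_*}\omega$ to get $\|z_{n_*}^\omega-x_{n_*}^\omega\|<\beta<r(\theta^{n_*}\omega)$, propagate forward with Proposition~\ref{prop:unst-stab}(2) to obtain the exponential estimate, and for the second assertion compare the exact minimizing identity $\psi^N(z_j^\omega)+A_{j,k}^{\omega,b}(z_j^\omega,z_k^\omega)=\psi^N(z_k^\omega)$ with the $x$-orbit via tempered Lipschitz bounds (the lower bound $0\le\cdots$ being the Lax--Oleinik sub-optimality). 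You are in fact more careful than the paper at one spot: the paper's line ``Since $n_*\ge -5N/6$'' should read $n_*\le -5N/6$ (as you correctly argue, one needs $|k-n_*|\ge|k+5N/6|$ for $k\ge -5N/6$), and the citation to Proposition~\ref{prop:hyp} in the paper's proof is actually to Proposition~\ref{prop:unst-stab}.
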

\begin{proof}
	The proof is again very similar to that of Lemma~\ref{lem:regular-interval}. Let $n_*$ be a regular time in $[-5N/6, - 11 N/ 12]$. Then 
	\[
		\|z_{n_*}^\omega - x_{n_*}^\omega\| < \beta < r(\theta^{n_*}\omega). 
	\]
	Apply Proposition~\ref{prop:hyp}, we get 
	\[
		\|z_k^\omega - x_k^\omega\| \le C(\theta^{n_*} \omega) e^{-\lambda'|k-n_*|} \le \beta^{-1} e^{-\lambda'|k-n_*|}. 
	\]
	Since $n_* \ge -5N/6$, the first estimate follows. For the second estimate, 
	since $z_k^\omega$ is a minimizer for $K_{-N, 0}\varphi$, we have 
	\[
		0 =  \psi^N(z_j^\omega) + A_{j, k}^{\omega, b}(z_j^\omega, z_k^\omega) - \psi^N(z_k^\omega), \quad -2N/3 \le j < k \le 0. 
	\]
	To avoid magnifying the coefficient of $\epsilon$, let $K^{\epsilon/2} > K(\omega)$ be the result of applying Lemma~\ref{lem:tempered-arK} with parameter $\epsilon/2$, then 
	\[
		\begin{aligned}
			& \psi^N(x_j^\omega) + A_{j, k}^{\omega, b}(x_j^\omega, x_k^\omega) - \psi^N(x_k^\omega) \le   2K^{\epsilon/2}(\theta^j \omega) \|z_j^\omega - x_j^\omega\|  + 2K^{\epsilon/2}(\theta^k \omega)  \|z_k^\omega - x_j^\omega\| \\
			& \le 4K^{\epsilon/2}(\omega) e^{\frac{\epsilon}2 \cdot \frac{2N}3} e^{-\lambda' N/6} \le 4 K^{\epsilon/2}(\omega) e^{-(\lambda' - 2\epsilon)N/6} \le e^{-(\lambda' - 3\epsilon)N/6},                                        
		\end{aligned}
	\]
	where the last step is achieved by taking $N_4(\omega)$ large enough. 
\end{proof}

We now prove Lemma~\ref{lem:middle-almost-min}. 
\begin{proof}[Proof of Lemma~\ref{lem:middle-almost-min}]
	We note that Lemma~\ref{lem:approx-guiding} proves half of the estimates in Lemma~\ref{lem:middle-almost-min}. The other half is proven using the same argument and reversing time. 
\end{proof}

For the rest of the paper, we will only deal with time $n \ge -N/3$. We record the improved estimates on this time interval. 
\begin{lemma}\label{lem:xk-zk-comp}
There $N_4(\omega)>0$ such that for $N > N_4(\omega)$, the following holds for  $-N/3 \le k \le 0$:
let $f$ be any of the following functions: $\psi^N(\cdot, k)$, $\psi^\pm(\cdot,k)$, $A_{m,k}(y, \cdot)$ or $A_{k,n}(\cdot, y)$, . Then 
\[
	|f(x_k^\omega) - f(z_k^\omega)| \le e^{-(\lambda - 2\epsilon)N/3}. 
\]
\end{lemma}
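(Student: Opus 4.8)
The plan is to combine the two exponential estimates already established: the configuration-space bound $\|z_k^\omega - x_k^\omega\| \le \beta^{-1}e^{-\lambda'|k+5N/6|}$ from Lemma~\ref{lem:approx-guiding}, and the Lipschitz/semi-concavity control of all the relevant functions provided by Lemma~\ref{lem:prop-act} and Lemma~\ref{lem:tempered-arK}. Concretely, for $-N/3 \le k \le 0$ we have $|k+5N/6| = k + 5N/6 \ge N/2$, so Lemma~\ref{lem:approx-guiding} gives $\|z_k^\omega - x_k^\omega\| \le \beta^{-1} e^{-\lambda' N/2}$. Since $\psi^N(\cdot,k)$, $\psi^\pm(\cdot,k)$, and $A_{m,k}(y,\cdot)$, $A_{k,n}(\cdot,y)$ are all Lipschitz with constant of the form $K(\theta^k\omega)$ (or $K(\theta^m\omega)$, $K_0(\theta^k\omega)$, etc., all $\le K(\theta^k\omega)$ up to harmless factors) by Lemma~\ref{lem:prop-act}, each such $f$ satisfies
\[
	|f(x_k^\omega) - f(z_k^\omega)| \le K(\theta^k\omega)\|z_k^\omega - x_k^\omega\| \le K(\theta^k\omega)\beta^{-1}e^{-\lambda' N/2}.
\]

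Next I would control the random factor $K(\theta^k\omega)$ using the tempered version $K^{\epsilon'}(\omega) \ge K(\theta^k\omega)e^{-\epsilon'|k|}$ from Lemma~\ref{lem:tempered-arK} (applied with a parameter $\epsilon' $ to be fixed below, e.g. $\epsilon' = \epsilon/2$). For $-N/3 \le k \le 0$ this yields $K(\theta^k\omega) \le K^{\epsilon'}(\omega) e^{\epsilon' |k|} \le K^{\epsilon'}(\omega) e^{\epsilon' N/3}$, so
\[
	|f(x_k^\omega) - f(z_k^\omega)| \le K^{\epsilon'}(\omega)\beta^{-1} e^{\epsilon' N/3} e^{-\lambda' N/2} \le K^{\epsilon'}(\omega)\beta^{-1} e^{-(\lambda' - 2\epsilon'/3)N/2}.
\]
Choosing $\epsilon'$ small relative to $\epsilon$ (and recalling $\lambda < \lambda'$ so that $(\lambda'-2\epsilon'/3)/2 > (\lambda - 2\epsilon)/3$ with room to spare once $N$ is large), the prefactor $K^{\epsilon'}(\omega)\beta^{-1}$ is a fixed (finite a.s.) random constant, and for $N$ larger than a suitable $N_4(\omega)$ we can absorb it into the exponent, obtaining $|f(x_k^\omega) - f(z_k^\omega)| \le e^{-(\lambda - 2\epsilon)N/3}$. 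One small point: the statement abbreviates the argument of $A$ as $A_{m,k}(y,\cdot)$ etc. with $m,n$ unconstrained, but for the Lipschitz estimate in the varying slot only the semi-concavity/Lipschitz constant matters, which is uniformly $K(\theta^k\omega)$-type by Lemma~\ref{lem:prop-act}(1); the fixed slot $y$ plays no role.

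I expect no real obstacle here — this is a bookkeeping lemma that packages Lemma~\ref{lem:approx-guiding} into a clean form for the time window $[-N/3, 0]$. The only thing requiring a moment of care is matching the exponents: one must check that the loss from $e^{\epsilon'|k|}$ (tempering) plus the loss from $|k+5N/6|$ only being $\ge N/2$ rather than $\ge 5N/6$ still leaves a rate strictly better than $(\lambda-2\epsilon)/3$, which it does comfortably since $\lambda'/2 > \lambda/3$ already (as $\lambda < \lambda' = \lambda_{d+1}(\nu)-\epsilon$ and $\lambda < \lambda_{d+1}(\nu)$), and then enlarge $N_4(\omega)$ to swallow the constant $K^{\epsilon'}(\omega)\beta^{-1}$. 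I would write the proof in two sentences: invoke Lemma~\ref{lem:approx-guiding} for the displacement bound on $[-N/3,0]$, then apply the Lipschitz bounds of Lemma~\ref{lem:prop-act} together with the tempered bound of Lemma~\ref{lem:tempered-arK} and enlarge $N_4(\omega)$.
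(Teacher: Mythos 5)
Your proof is correct and matches the paper's argument: both rely on the displacement bound from Lemma~\ref{lem:approx-guiding}, the $K(\theta^k\omega)$-Lipschitz control of all candidate $f$ from Lemma~\ref{lem:prop-act}, the tempered upper bound from Lemma~\ref{lem:tempered-arK}, and absorption of the random prefactor by enlarging $N_4(\omega)$. The only cosmetic difference is that you keep the sharper displacement rate $e^{-\lambda' N/2}$ while the paper relaxes it to $e^{-\lambda' N/3}$ and lands on $e^{-(\lambda'-2\epsilon)N/3}$ (note the paper's statement has $\lambda$ where its own proof produces $\lambda'$ --- an apparent typo, and since $\lambda < \lambda'$ your target is the weaker one anyway, so there is no issue).
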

\begin{proof}
	We note that all choices of $f$ are $K(\theta^k\omega)$ Lipshitz functions. Then 
	\begin{align*}
		& |f(x_k^\omega) - f(z_k^\omega)| \le K(\theta^k \omega) \|x_k^\omega - z_k^\omega\| \le K^\epsilon(\theta^k \omega) \beta^{-1} e^{-\lambda' N/3} \\
		& \le K^\epsilon(\omega) e^{\epsilon|k|} \beta^{-1} e^{-\lambda' N/3}                                                                             
		\le \beta^{-1}K^\epsilon(\omega) e^{-(\lambda' - \epsilon\epsilon)N/3} \le e^{-(\lambda' - 2\epsilon)N/3}
	\end{align*}
	for $N$ large enough. 
\end{proof}

\subsection{Stability of the finite time minimizers}

We show that if an orbit $(y_n, \eta_n)_{n = -N}^0$ satisfies $\Apm(\omega, \varphi, \delta, N)$ condition, then it is stable in the backward time. First, we obtain an analog of \eqref{eq:metric}. 
\begin{lemma}\label{lem:QN-metric}
Assume the orbit $(y_n, \eta_n)$ satisfies $\Apm(\omega, \varphi, \delta, N)$, 
Then for each $- N/3 \le k \le 0$ such that $\|y_k - x_k^\omega\| < r(\theta^k \omega)$, we have
\[
	Q^N(y_k, k) - Q^N(x_k^\omega, k) \ge a(\theta^k \omega)\|y_k - x_k^\omega\|^2 -  \delta,
\]
\[
	Q^N(y_k, k) - Q^N(x_k^\omega, k) \le K(\theta^k \omega) \|y_k - x_k^\omega\|^2 + \delta. 
\]
\end{lemma}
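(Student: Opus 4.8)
The plan is to relate $Q^N$ to $Q^\infty$ using the approximation property, and then transfer the quadratic estimate \eqref{eq:metric} (which is stated for $Q^\infty$) to $Q^N$. Recall $Q^N(x,k) = \psi^N_\omega(x,k) - \psi^+_\omega(x,k)$ and $Q^\infty(x,k) = \psi^-_\omega(x,k) - \psi^+_\omega(x,k)$, so that for any $x$,
\[
	Q^N(x,k) - Q^\infty(x,k) = \psi^N_\omega(x,k) - \psi^-_\omega(x,k).
\]
Hence the difference of the two ``quadratic profiles'' satisfies
\[
	\bigl( Q^N(y_k,k) - Q^N(x_k^\omega,k) \bigr) - \bigl( Q^\infty(y_k,k) - Q^\infty(x_k^\omega,k) \bigr)
	= \bigl( \psi^N_\omega(y_k,k) - \psi^N_\omega(x_k^\omega,k) \bigr) - \bigl( \psi^-_\omega(y_k,k) - \psi^-_\omega(x_k,k) \bigr),
\]
and the right-hand side is exactly the quantity controlled by $\delta$ in Definition~\ref{def:APm}, \emph{provided} that $-N/3 \le k \le 0$ and $\|y_k - x_k^\omega\| < r(\theta^k\omega)$, which are precisely the hypotheses of the lemma. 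Therefore
\[
	\bigl| \bigl( Q^N(y_k,k) - Q^N(x_k^\omega,k) \bigr) - \bigl( Q^\infty(y_k,k) - Q^\infty(x_k^\omega,k) \bigr) \bigr| < \delta.
\]

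Next I would invoke the non-degeneracy estimate for $Q^\infty$ at the shifted time $\theta^k\omega$. Applying \eqref{eq:non-deg}/\eqref{eq:metric} at base point $\theta^k\omega$ (note $Q^\infty_{\theta^k\omega}(\cdot,0) = Q^\infty_\omega(\cdot,k)$ and $x_0^{\theta^k\omega} = x_k^\omega$ by the equivariance built into Proposition~\ref{prop:glob} and \eqref{eq:time-shift}), we get, as long as $\|y_k - x_k^\omega\| < r(\theta^k\omega)$,
\[
	a(\theta^k\omega)\|y_k - x_k^\omega\|^2 \le Q^\infty(y_k,k) - Q^\infty(x_k^\omega,k) \le K(\theta^k\omega)\|y_k - x_k^\omega\|^2.
\]
Combining this with the $\delta$-bound from the previous paragraph via the triangle inequality yields both claimed inequalities:
\[
	Q^N(y_k,k) - Q^N(x_k^\omega,k) \ge a(\theta^k\omega)\|y_k - x_k^\omega\|^2 - \delta,
	\qquad
	Q^N(y_k,k) - Q^N(x_k^\omega,k) \le K(\theta^k\omega)\|y_k - x_k^\omega\|^2 + \delta.
\]

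There is essentially no serious obstacle here; the lemma is a direct bookkeeping consequence of Definition~\ref{def:APm} combined with \eqref{eq:metric}. The only points requiring a little care are: (i) checking that the constant $r(\theta^k\omega)$ appearing in the approximation property (Definition~\ref{def:APm}) is compatible with — i.e. at most — the radius on which \eqref{eq:metric} holds, which is fine since both are the common tempered random variable $r$ from Lemma~\ref{lem:tempered-arK}; (ii) verifying the stationarity/equivariance identities $Q^\infty_{\theta^k\omega}(\cdot,0) = Q^\infty_\omega(\cdot,k)$ and $x_0^{\theta^k\omega} = x_k^\omega$, so that \eqref{eq:metric} at $\theta^k\omega$ really gives the stated bound at time $k$; and (iii) the minor nuisance that Definition~\ref{def:APm} writes $\psi^-_\omega(x_n,n)$ (with $x_n = x_n^\omega$ the global minimizer) rather than $\psi^-_\omega(x_n^\omega,n)$ — these are the same point, so the expression in the definition is literally $Q^\infty(y_k,k) - Q^\infty(x_k^\omega,k)$ up to the common term $\psi^+_\omega$ cancelling. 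Once these are noted, the proof is two lines of triangle inequality.
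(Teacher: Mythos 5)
Your proof is correct and follows exactly the paper's argument: use the fact that $Q^N-Q^\infty=\psi^N-\psi^-$, so the $\Apm$ condition bounds the difference of the two quadratic profiles by $\delta$, and then apply \eqref{eq:metric} at the shifted time. The paper's proof is precisely this two-step triangle-inequality observation, just written more tersely.
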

\begin{proof}
	The definition of $\Apm$ implies that for all $\|y_k - x_k^\omega\| < r(\theta^k \omega)$, 
	\[
		\|\left( Q^N(y_k, k) - Q^N(x_k^\omega, k) \right)
		- \left( Q^\infty(y_k, k) - Q^\infty(x_k^\omega, k) \right)
		\| < \delta
	\]
	the lemma follows directly from \eqref{eq:metric}. 
\end{proof}

We combine this with \eqref{eq:lyapunov} to obtain a backward stability for $(y_n, \eta_n)$. 

\begin{lemma}\label{lem:stab-conf}
Assume that $(y_n, \eta_n)$ satisfies $\Apm(\omega, \varphi, \delta, N)$ with 
\[
	\delta \ge e^{-(\lambda' - 3\epsilon)N/3}.
\]
There exists $C_1^\epsilon(\omega) >0$ with $e^{-\epsilon}\le C_1^\epsilon(\omega)/C_1^{\epsilon}(\theta\omega) \le  e^\epsilon$ such that, if  for $N > N_4(\omega)$,  minimizer , if  $- N/3 \le j < k \le 0$, satisfies $\|y_j - x_j^\omega\|<r^\epsilon(\theta^j \omega)$, $ \|y_k - x_k^\omega\| < r^\epsilon(\theta^k \omega)$, we have 
\[
	\|y_j - x_j^\omega\| \le C_1^\epsilon(\theta^k \omega) e^{\epsilon|j-k|} \left( \|y_k - x_k^\omega\| + 3 \sqrt{\delta}  \right). 
\]
\end{lemma}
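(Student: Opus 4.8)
The plan is to control the monotone Lyapunov quantity
\[
g(n) = Q^N(y_n, n) - Q^N(z_n^\omega, n),
\]
which is non-decreasing in $n$ by \eqref{eq:lyapunov}, so in particular $g(j) \le g(k)$ for $j < k$. First I would replace the guiding orbit $z_n^\omega$ by the global minimizer $x_n^\omega$: since $j, k \in [-N/3, 0]$, Lemma~\ref{lem:xk-zk-comp} applied to $\psi^N(\cdot,n)$ and $\psi^+(\cdot,n)$ shows $|Q^N(x_n^\omega, n) - Q^N(z_n^\omega, n)|$ is exponentially small (of order $e^{-(\lambda'-2\epsilon)N/3}$, i.e.\ with rate strictly larger than $(\lambda'-3\epsilon)/3$) for $n \in \{j,k\}$. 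Feeding this into Lemma~\ref{lem:QN-metric} — which applies because $\|y_j - x_j^\omega\| < r^\epsilon(\theta^j\omega) < r(\theta^j\omega)$ and likewise at $k$ — yields a lower bound
\[
g(j) \ge a(\theta^j\omega)\|y_j - x_j^\omega\|^2 - \delta - 2e^{-(\lambda'-2\epsilon)N/3}
\]
and an upper bound
\[
g(k) \le K(\theta^k\omega)\|y_k - x_k^\omega\|^2 + \delta + 2e^{-(\lambda'-2\epsilon)N/3}.
\]

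Next I would chain $g(j) \le g(k)$. Using the hypothesis $\delta \ge e^{-(\lambda'-3\epsilon)N/3}$, each exponentially small term is dominated by $\delta$ once $N$ is large — this is precisely what fixes the threshold $N_4(\omega)$ — so the errors collapse and one gets
\[
a(\theta^j\omega)\|y_j - x_j^\omega\|^2 \le K(\theta^k\omega)\|y_k - x_k^\omega\|^2 + 3\delta.
\]
Dividing by $a(\theta^j\omega)$ and taking square roots via $\sqrt{u+v}\le\sqrt u+\sqrt v$ gives
\[
\|y_j - x_j^\omega\| \le \sqrt{\tfrac{K(\theta^k\omega)}{a(\theta^j\omega)}}\,\|y_k - x_k^\omega\| + \sqrt{\tfrac{3\delta}{a(\theta^j\omega)}}.
\]

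Finally I would convert the $\theta^j$-indexed coefficients into $\theta^k$-indexed ones at the cost of an $e^{\epsilon|j-k|}$ factor. Replacing $a, K$ by the tempered versions $a^\epsilon \le a$, $K^\epsilon \ge K$ from Lemma~\ref{lem:tempered-arK} and iterating the tempering kernel over the $|j-k| \le N/3$ steps gives $a^\epsilon(\theta^j\omega) \ge e^{-\epsilon|j-k|} a^\epsilon(\theta^k\omega)$, hence $\sqrt{K(\theta^k\omega)/a(\theta^j\omega)} \le e^{\epsilon|j-k|}\sqrt{K^\epsilon(\theta^k\omega)/a^\epsilon(\theta^k\omega)}$ and $\sqrt{3/a(\theta^j\omega)} \le 3 e^{\epsilon|j-k|}/\sqrt{a^\epsilon(\theta^k\omega)}$ (using $\sqrt3\le 3$). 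Setting
\[
C_1^\epsilon(\omega) = \sqrt{\tfrac{K^\epsilon(\omega)}{a^\epsilon(\omega)}} + \tfrac{1}{\sqrt{a^\epsilon(\omega)}}
\]
dominates both and produces the asserted bound $\|y_j - x_j^\omega\| \le C_1^\epsilon(\theta^k\omega) e^{\epsilon|j-k|}(\|y_k - x_k^\omega\| + 3\sqrt\delta)$; moreover $C_1^\epsilon$ satisfies $e^{-\epsilon}\le C_1^\epsilon(\omega)/C_1^\epsilon(\theta\omega)\le e^\epsilon$ since sums, products, inverses and powers of tempered random variables are tempered.

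None of the steps is deep; the only points requiring care are (i) checking that every exponentially small error term is genuinely absorbed by $\delta$, which is exactly where the lower bound $\delta \ge e^{-(\lambda'-3\epsilon)N/3}$ is used and where $N_4(\omega)$ is chosen, and (ii) verifying that the assembled random variable $C_1^\epsilon$ inherits the tempering estimate from $a^\epsilon$ and $K^\epsilon$.
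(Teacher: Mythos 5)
Your argument is essentially the same as the paper's: monotonicity of $Q^N(y_n,n)-Q^N(z_n^\omega,n)$ from \eqref{eq:lyapunov}, transfer from the guiding orbit $z_n^\omega$ to the global minimizer $x_n^\omega$ via Lemma~\ref{lem:xk-zk-comp}, the two-sided quadratic bounds from Lemma~\ref{lem:QN-metric}, and a final tempering step to move the constants from $\theta^j$ to $\theta^k$; the only differences are cosmetic (you introduce $g(n)$ explicitly and split the square root into two terms, whereas the paper chains the $Q^N$ differences directly and takes $C_1^\epsilon=\sqrt{K^\epsilon/a^\epsilon}$). One small point of bookkeeping: after chaining, the error term comes out to $2\delta+4e^{-(\lambda'-2\epsilon)N/3}$, which for $N>N_4(\omega)$ is at most $6\delta$ rather than $3\delta$ as you wrote, but $\sqrt{6\delta}<3\sqrt{\delta}$ so the final estimate is unaffected.
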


\begin{proof}
	Apply Lemma~\ref{lem:xk-zk-comp}, we have
	\begin{align*}
		& Q^N(y_j, j) - Q^N(x_j^\omega, j)  \le Q^N(y_j, j) - Q^N(z_j^\omega, j) +  2 e^{-(\lambda' - 3\epsilon)N/3} \\
		& \le Q^N(y_k, k) - Q^N(z_k^\omega, k) + 2 e^{-(\lambda' - 3\epsilon)N/3}                                    \\& \le Q^N(y_k, k) - Q^N(x_k^\omega, k) + 4 e^{-(\lambda' - 3\epsilon)N/3} \\
		& \le Q^N(y_k, k) - Q^N(x_k^\omega, k)  + 4\delta.                                                           
	\end{align*}
	Combine with Lemma~\ref{lem:QN-metric}, we get 
	\[
		a^\epsilon(\theta^j \omega) \|y_j - x_j^\omega\|^2 \le K^\epsilon(\theta^k \omega) \|y_k - x_k^\omega\|^2  + 6\delta. 
	\]
	Using $a^\epsilon(\theta^j \omega) \ge e^{-\epsilon|j-k|}a^\epsilon(\theta^k \omega)$, we obtain
	\[
		\|y_j - x_j^\omega\|^2 \le K^\epsilon/a^\epsilon(\theta^k\omega) e^{\epsilon |j-k|} \left(  \|y_k - x_k^\omega\|^2 + 6\delta \right),
	\]
	therefore 
	\[
		\|y_j - x_j^\omega\|  \le e^{\epsilon |j-k|}  \sqrt{ K^\epsilon/a^\epsilon}(\theta^k \omega) \left(\|y_k - x_k^\omega\| + 3\sqrt{\delta} \right) .
	\]
	The lemma follows by taking $C_1^\epsilon = \sqrt{K^\epsilon/a^\epsilon}$.
\end{proof}

\section{Estimates from non-uniform hyperbolicity}
\label{sec:hyper-theory}

\subsection{Hyperbolic properties of the global minimizer} Denote 
\[
	X_n^\omega = (x_n^\omega, \zeta_n^\omega) \in \T^d \times \R^d,
\]
this orbit is non-uniformly hyperbolic.

\begin{proposition}\label{prop:nonuniform-hyp}
For any $\epsilon >0$, the following hold. 
\begin{enumerate}
	\item (Stable and unstable bundles) For each $n \in \Z$, there exists the splitting 
	\[
		\R^{2d} = E^s(X_n^\omega) \oplus E^u(X_n^\omega),
	\]
	where $\dim E^s = \dim E^u = d$. We denote by $\Pi^s_n, \Pi^u_n$ the projection to $E^s, E^u$ under this splitting. 
	
	\item (Lyapunov norm) There exist norms $\|\cdot\|_n^s$, $\|\cdot\|_n^u$ on $\R^d$, and the Lyapunov norm on $\R^{2d}$ is defined by 
	\[
		(\|v\|_n')^2 =   (\|\Pi_n^s v\|_n^s)^2 + ( \|\Pi_n^u v\|_n^u)^2.
	\]
	There exists a function $M^\epsilon(\omega)>0$ satisfying $e^{-\epsilon} \le M^\epsilon(\omega)/M^\epsilon(\theta \omega) \le e^\epsilon$ such that 
	\[
		\|v\| \le \|v\|_n' \le M^\epsilon(\theta^n \omega) \|v\|,
	\]
	where $\|\cdot\|$ is the Euclidean metric. We will omit the subscript from the $\|\cdot\|_n'$ and $\Pi^{s/u}_n$ when the index is clear from context. 
	
	\item (Cones) We define the unstable cones
	\[
		C_n^u = \{ v \in \R^{2d}: \quad  \|\Pi_n^s v\|_n^s \le \|\Pi_n^u v\|_n^u \},
	\]
	and the table cones 
	\[
		C_n^s = \{ v \in \R^{2d}: \quad  \|\Pi_n^u v\|_n^u \le \|\Pi_n^s v\|_n^s \}.
	\]
	
	\item (Hyperbolicity) There exists $\sigma^\epsilon(\omega)>0$ with $e^{-\epsilon} \le \sigma^{\epsilon}(\omega)/\sigma^\epsilon(\theta \omega) \le e^\epsilon$, such that the following hold. Let $Y_n$ be an orbit of $\Phi_n^\omega$. 
	\begin{enumerate}
		\item  If 
		\[
			\|Y_n - X_n\|'< \sigma^\epsilon(\theta^n \omega), \quad  \|Y_{n-1} - X_{n-1}\|' < \sigma^\epsilon(\theta^{n-1}\omega),  
		\]
		then   \[
			\|\Pi^s Y_{n-1} - \Pi^s X_{n-1}\|' \ge e^{\lambda'} \|\Pi^s Y_n - \Pi^s X_n\|',   
		\]
		where $\lambda' = \lambda - \epsilon$.  Moreover, if  $Y_n - X_n \in C_n^s$, then     $Y_{n-1} - X_{n-1} \in C_{n-1}^s$. 
		In other words, the stable cones are backward invariant and backward expanding. 
		\item If
		\[
			\|Y_n - X_n\|'< \sigma^\epsilon(\theta^n \omega), \quad  \|Y_{n-1} - X_{n-1}\|' < \sigma^\epsilon(\theta^{n-1}\omega),  
		\]
		then    
		\[
			\|\Pi^u Y_{n-1} - \Pi^u X_{n-1}\|' \le e^{-\lambda'} \|\Pi^u Y_n - \Pi^u X_n\|'. 
		\]
		Moreover, if  $Y_n - X_n \in C_n^u$,  then  $Y_{n-1} - X_{n-1} \in C_n^u$.
	\end{enumerate}
	
\end{enumerate}

\end{proposition}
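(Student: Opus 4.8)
This is the standard output of the Oseledets multiplicative ergodic theorem together with the Pesin--Lyapunov change of norm, applied to the symplectic derivative cocycle $D\Phi$ over the ergodic system $(\hat\Phi,\nu)$; the only substantive input is the spectral gap at $0$ supplied by Proposition~\ref{prop:hyp}(2). Since $\nu$ is concentrated on the graph $\omega\mapsto X_0^\omega$, any conclusion that holds $\nu$-a.e.\ is a conclusion about $X_n^\omega$ for $P$-a.e.\ $\omega$. By Proposition~\ref{prop:hyp}(2) the Lyapunov spectrum of $D\Phi$ has no zero exponent, and by symplecticity ($\lambda_i=-\lambda_{2d-i+1}$) it is symmetric about $0$; I set $\lambda=\lambda_{d+1}(\nu)>0$ and $\lambda'=\lambda-\epsilon$. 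Oseledets' theorem then produces a measurable, $D\Phi$-equivariant splitting $\R^{2d}=E^s(X_n^\omega)\oplus E^u(X_n^\omega)$ with $\dim E^s=\dim E^u=d$, along which $D\Phi$ contracts, resp.\ expands, exponentially in the appropriate time direction; this is item (1).

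For item (2) the plan is the usual tempered Lyapunov-norm construction: for $v\in E^s(X_n^\omega)$ set $\|v\|_n^s=\big(\sum_{k\ge 0}e^{2\lambda' k}\|D\Phi^\omega_{n,n+k}v\|^2\big)^{1/2}$, and symmetrically, using the inverse cocycle over backward iterates, $\|v\|_n^u$ for $v\in E^u(X_n^\omega)$; these series converge $\nu$-a.e., and $(\|v\|_n')^2=(\|\Pi_n^s v\|_n^s)^2+(\|\Pi_n^u v\|_n^u)^2$. The lower bound $\|v\|\le\|v\|_n'$ is immediate, and the upper comparison $\|v\|_0'\le\bar M(\omega)\|v\|$ holds with $\bar M(\omega)$ finite a.e.; since $\log\max\{\bar M,1\}$ is integrable (it is dominated by a power of $K(\omega)$ times the Oseledets comparison constants, and $\bE\log K<\infty$), Lemma~\ref{lem:tempering} yields $M^\epsilon(\omega)\ge\bar M(\omega)$ with $e^{-\epsilon}\le M^\epsilon(\omega)/M^\epsilon(\theta\omega)\le e^\epsilon$. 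Item (3) is then just a definition: the cones $C_n^u,C_n^s$ are read off directly from $\|\cdot\|_n'$, and for the \emph{linear} cocycle the cone invariance and the expansion rate $e^{\lambda}$ hold by construction of the Lyapunov norm.

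Item (4) concerns genuine $\Phi^\omega$-orbits rather than the linear cocycle, so the remaining work is to absorb the nonlinearity. Since $F_j^\omega\in C^{2+\alpha}$, each $\Phi_{n-1}^\omega$ is $C^{1+\alpha}$ and $D\Phi_{n-1}^\omega$ is $\alpha$-Hölder with seminorm controlled by $K_0(\theta^{n-1}\omega)$; hence $D\Phi_{n-1}^\omega(Y_{n-1})=D\Phi_{n-1}^\omega(X_{n-1})+R$ with $\|R\|\le K_0(\theta^{n-1}\omega)\,\|Y_{n-1}-X_{n-1}\|^{\alpha}$. A routine perturbation argument then shows that the linear statements of the previous paragraph persist, with $e^{\lambda}$ relaxed to $e^{\lambda'}=e^{\lambda-\epsilon}$, for any perturbation whose operator norm in the Lyapunov norms at the two relevant sites is below some $\eta(\epsilon)$ depending only on $\epsilon$; the stable-cone statement uses this for $D\Phi_{n-1}^\omega(Y_{n-1})$ seen backward, the unstable one for its inverse. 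It therefore suffices to choose $\sigma^\epsilon(\omega)$ so small, relative to $\epsilon$, $K_0(\theta^{\pm1}\omega)$ and $M^\epsilon(\theta^{\pm1}\omega)$, that the hypotheses $\|Y_{n-1}-X_{n-1}\|'<\sigma^\epsilon(\theta^{n-1}\omega)$ and $\|Y_n-X_n\|'<\sigma^\epsilon(\theta^n\omega)$ force $\|R\|$ in the Lyapunov norm below $\eta(\epsilon)$, and then to invoke Lemma~\ref{lem:tempering} once more to replace this $\sigma^\epsilon$ by a tempered one.

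The delicate point — and where I expect the real effort to lie — is this last step: every perturbative threshold is random, involving $K_0$, $M^\epsilon$ and Oseledets constants evaluated at the neighbouring sites $\theta^{n-1}\omega$ and $\theta^n\omega$, and one must produce a single $\sigma^\epsilon(\omega)$ that is simultaneously small enough at every site to run the cone estimate \emph{and} tempered, so that the per-step loss $e^{\pm\epsilon}$ does not accumulate along the orbit. Verifying the log-integrability needed to apply the tempering lemma to all these quantities, and checking that the merely Hölder (not Lipschitz) control of $D\Phi$ still suffices in the cone computation, is the technical content; this is classical non-uniform hyperbolicity, and the plan is to follow \cite{BP07} closely.
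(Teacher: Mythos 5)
The paper gives no proof of this proposition at all — it is stated without comment as a standard package from non-uniform hyperbolicity theory, with \cite{BP07} cited elsewhere for exactly this kind of statement (e.g.\ in the proof of Lemma~\ref{lem:tempered-arK}, the temperedness of $C(\omega)$ is attributed to \cite{BP07} as a ``standard result''). Your proposal fills in precisely the argument the authors are implicitly invoking: Oseledets over $(\hat\Phi,\nu)$ using the spectral gap from Proposition~\ref{prop:hyp}(2) for the splitting, the weighted-sum Lyapunov norm for item (2), cones read off from that norm for item (3), and a Hölder perturbation estimate plus tempering for item (4). The ingredients and their order are the right ones, so this is the same route, just written out.

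Two points worth flagging, both of which you already gesture at. First, you assert log-integrability of the Lyapunov comparison constant $\bar M(\omega)$ ``dominated by a power of $K(\omega)$ times the Oseledets comparison constants''; the first factor is fine since $\bE\log K<\infty$ is part of the standing assumptions, but the log-integrability of the Oseledets constant itself requires $\log^+\|D\Phi_0^\omega\|$ and $\log^+\|(D\Phi_0^\omega)^{-1}\|$ in $L^1(\nu)$, which here follows from Assumption 5 ($\bE|\xi_j|<\infty$) and the explicit form \eqref{eq:twistmaps}; this hypothesis check should be made explicit rather than folded into ``classical''. Second, the Hölder (rather than Lipschitz) modulus of $D\Phi$ does suffice, but the threshold $\sigma^\epsilon$ you construct then depends on $K_0$, $M^\epsilon$, and the Oseledets constants at \emph{both} sites $\theta^{n-1}\omega$ and $\theta^n\omega$; you correctly note this, and since products, powers, and shifted copies of tempered variables are tempered (Lemma~\ref{lem:tempering} and the closure properties stated after it), a single application of the tempering lemma gives the uniform $\sigma^\epsilon$ satisfying \eqref{eq:tem-kernel}. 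Neither point is a gap — both are exactly what one must check in a careful write-up of \cite{BP07} in this setting — but neither is trivial, and your sketch leans on them.
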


\subsection{Stability of minimizer in the phase space}

We improve Lemma~\ref{lem:stab-conf} to its counter part in the phase space, using the Lyapunov norm. 

\begin{lemma}\label{lem:localize-phase} Under the same assumption of Lemma~\ref{lem:stab-conf}, there exists $C_2^\epsilon(\omega)>0$ and $e^\epsilon \le C_2^\epsilon(\omega)/C_2^\epsilon(\theta\omega) \le e^{-\epsilon}$ such that,  if $-N/3 < j < k \le 0$ satisfies 
\[
	\|y_j - x_j^\omega\| < r(\theta^j \omega), \quad \|y_k - x_k^\omega\| < r(\theta^k \omega), 
\]
then
\[
	\|Y_j - X_j^\omega\|' \le C_2^\epsilon(\theta^k \omega)e^{\epsilon|j-k|} \left(   \|y_k - x_k^\omega\| + \sqrt{\delta} \right). 
\]
\end{lemma}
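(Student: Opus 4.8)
The plan is to bootstrap the configuration-space estimate of Lemma~\ref{lem:stab-conf} into a phase-space estimate by controlling the velocity components $\eta_j$, and then converting from the Euclidean norm to the Lyapunov norm $\|\cdot\|'$. First I would note that since $(y_n,\eta_n)$ is a minimizer for $K_{-N,0}^{\omega,b}\varphi$ (or for the forward--backward solution, via Corollary~\ref{cor:two-way-minimizer}) and $z_k^\omega$ is also such a minimizer on the interior (Lemma~\ref{lem:prop-fin-min}(4)), Lemma~\ref{lem:lipsht-est} gives a Lipschitz bound $\|\eta_j - \zeta_j^\omega\| \le K(\theta^j\omega)\|y_j - z_j^\omega\|$ on the interior times $-N/3 < j < 0$; combining with $\|z_j^\omega - x_j^\omega\| \le \beta^{-1}e^{-\lambda'|j+5N/6|}$ from Lemma~\ref{lem:approx-guiding} and Lemma~\ref{lem:xk-zk-comp} shows $\|\eta_j - \zeta_j^\omega\|$ is controlled by $K^\epsilon(\theta^j\omega)\|y_j - x_j^\omega\|$ up to an exponentially small additive error $e^{-(\lambda'-2\epsilon)N/3}$. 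Hence
\[
	\|Y_j - X_j^\omega\| = \|(y_j,\eta_j) - (x_j^\omega,\zeta_j^\omega)\| \le (1 + K^\epsilon(\theta^j\omega))\|y_j - x_j^\omega\| + e^{-(\lambda'-2\epsilon)N/3}.
\]

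Next I would feed in Lemma~\ref{lem:stab-conf}, which bounds $\|y_j - x_j^\omega\|$ by $C_1^\epsilon(\theta^k\omega)e^{\epsilon|j-k|}(\|y_k - x_k^\omega\| + 3\sqrt\delta)$. Since $\delta \ge e^{-(\lambda'-3\epsilon)N/3}$ by hypothesis, the additive exponential error from the previous step is dominated by $\sqrt\delta$ up to a tempered factor, so it can be absorbed. The factor $1 + K^\epsilon(\theta^j\omega)$ I would re-express using temperedness: $1 + K^\epsilon(\theta^j\omega) \le (1 + K^\epsilon(\theta^k\omega))e^{\epsilon|j-k|}$, and $1 + K^\epsilon$ is tempered, so its product with $C_1^\epsilon$ is again tempered. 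Finally, to pass to the Lyapunov norm I use Proposition~\ref{prop:nonuniform-hyp}(2): $\|Y_j - X_j^\omega\|' \le M^\epsilon(\theta^j\omega)\|Y_j - X_j^\omega\| \le M^\epsilon(\theta^k\omega)e^{\epsilon|j-k|}\|Y_j - X_j^\omega\|$. Collecting all the tempered coefficients into a single $C_2^\epsilon(\omega) := M^\epsilon(\omega)(1+K^\epsilon(\omega))C_1^\epsilon(\omega)$ (times an absolute constant to absorb the $3\sqrt\delta$ and the absorbed exponential error) and merging the several $e^{\epsilon|j-k|}$ factors at the cost of enlarging $\epsilon$ by a fixed factor — or equivalently running the whole argument with $\epsilon$ replaced by $\epsilon/c$ for a suitable constant $c$ — yields exactly the claimed bound $\|Y_j - X_j^\omega\|' \le C_2^\epsilon(\theta^k\omega)e^{\epsilon|j-k|}(\|y_k - x_k^\omega\| + \sqrt\delta)$.

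The main obstacle I anticipate is bookkeeping the accumulation of $\epsilon$'s and the tempered random variables cleanly: each step (Lipschitz velocity bound, Lemma~\ref{lem:stab-conf}, norm comparison) contributes its own $e^{\epsilon|j-k|}$ and its own tempered coefficient evaluated at a shifted $\omega$, and one must consistently shift everything to the base point $\theta^k\omega$ using $e^{-\epsilon}\le g^\epsilon(\omega)/g^\epsilon(\theta\omega)\le e^\epsilon$, incurring further $e^{\epsilon|j-k|}$ factors. The honest fix is to prove the statement with a slightly smaller constant in the exponent (say run Lemmas~\ref{lem:stab-conf} and the hyperbolicity machinery with parameter $\epsilon/3$) and then the final merged exponent is $\le \epsilon|j-k|$; the paper's convention of stating results "for any $\epsilon>0$" makes this costless. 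A secondary nuisance is that Lemma~\ref{lem:lipsht-est} requires the two minimizers to be interior points of the time interval with $n-m\ge 2$, which forces the restriction $-N/3 < j$ (strict) that already appears in the statement, and one must check $z_j^\omega$ genuinely qualifies as a second minimizer for the same operator — this is precisely Lemma~\ref{lem:prop-fin-min}(4), so no gap arises, but it should be cited explicitly.
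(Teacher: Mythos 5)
Your proposal follows the same overall route as the paper: plug the configuration-space bound from Lemma~\ref{lem:stab-conf} into a Lipschitz estimate for the velocity component, pass to the Lyapunov norm using Proposition~\ref{prop:nonuniform-hyp}(2), shift all the tempered coefficients to the base point $\theta^k\omega$, and absorb the extra $e^{\epsilon|j-k|}$ factors by rerunning the argument with $\epsilon$ replaced by a fixed fraction of itself. The paper is in fact terser at the velocity step: it invokes Lemma~\ref{lem:grad-lip} to write $\|\eta_j - v_j^\omega\| \le K^\epsilon(\theta^j\omega)\|y_j - x_j^\omega\|$ directly, without routing through the guiding orbit, whereas you detour through $(z_j^\omega,\zeta_j^\omega)$ via Lemma~\ref{lem:lipsht-est} and then convert $z_j^\omega \to x_j^\omega$ using Lemmas~\ref{lem:approx-guiding} and~\ref{lem:xk-zk-comp}. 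That detour is legitimate and arguably more careful than the paper's one-liner. The one loose end in your write-up is the velocity component of the conversion: Lemma~\ref{lem:lipsht-est} controls $\|\eta_j - \zeta_j^\omega\|$, and Lemmas~\ref{lem:approx-guiding}/\ref{lem:xk-zk-comp} control $\|z_j^\omega - x_j^\omega\|$, but to land on $\|Y_j - X_j^\omega\|$ you still need $\|\zeta_j^\omega - v_j^\omega\|$ to be exponentially small. This does hold — it is the velocity component of the phase-space contraction of the forward minimizer to the global minimizer behind Lemma~\ref{lem:approx-guiding}, i.e. Proposition~\ref{prop:unst-stab}(2) applied at the regular time $n_*$ — but it should be stated explicitly since the cited lemmas as phrased only bound the configuration-space gap.
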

\begin{proof}
	Apply Lemma~\ref{lem:stab-conf}, we have 
	\[
		\|y_j - x_j^\omega\| \le C_1^\epsilon(\theta^k \omega)  e^{\epsilon|j-k|} \left(  \|y_k - z_k^\omega\| + 3 \sqrt{\delta}  \right). 
	\]
	Since $j < -1$, we use Lemma~\ref{lem:grad-lip} to get 
	\[
		\|\eta_j - v_j^\omega\| \le K^{\epsilon}(\theta^j \omega) \|y_j - x_j^\omega\|, 
	\]
	and hence
	\[
		\|Y_j - X_j^\omega\| \le 2K^{\epsilon}(\theta^j \omega) \|y_j - x_j^\omega\|. 
	\]
	We now have 
	\begin{align*}
		& \|Y_j - X_j^\omega\|' \le M^\epsilon(\theta^j \omega)\|Y_j - X_j\| \le  2(M^\epsilon K^\epsilon)(\theta^j \omega) \|y_j - x_j^\omega\|                    \\
		& \le 2 C_1^\epsilon(\theta^k \omega) (M^\epsilon K^\epsilon)(\theta^k \omega) e^{2\epsilon|j-k|} \left(  \|y_k - x_k^\omega\| + 3\sqrt{\delta}  \right)    \\
		& \le 6 C_1^\epsilon(\theta^k \omega) (M^\epsilon K^\epsilon)(\theta^k \omega)e^{\epsilon|j-k|} \left(   \| y_k - x_k^\omega\|  + \sqrt{\delta}   \right) . 
	\end{align*}
	We now replace $\epsilon$ with $\epsilon/3$ in the above estimate, and define $C_2^\epsilon = 6C_1^{\epsilon/3}M^{\epsilon/3}K^{\epsilon/3}$, which satisfies $e^\epsilon \le C_2^\epsilon(\omega)/C_2^\epsilon(\theta\omega) \le e^{-\epsilon}$. The lemma follows. 
\end{proof}

\subsection{Exponential localization using hyperbolicity}

We now show hyperbolicity, together with Lemma~\ref{lem:localize-phase} lead to a stronger localization. In Lemma~\ref{lem:cone-contraction} we show a dichotomy: either $y_n - x_n^\omega$ contracts for each backward iterate, or $y_n$ is $\delta^q$ close to $x_n$ to begin with. 

Define 
\[
	r_1(\omega) = \min \{ r^{\epsilon/4}(\omega), \sigma^{\epsilon/4}(\omega)\},
\]
where $r^\epsilon$ is defined in Lemma~\ref{lem:tempered-arK}, and $\sigma^\epsilon$ defined in property (4) of Proposition~\ref{prop:nonuniform-hyp}. Now define 
\[
	\rho_0(\omega) = \left(  \frac{r_1(\omega)}{3 C_2^{\epsilon/4}(\omega)} \right)^2, 
\]
then $e^\epsilon \le \rho_0(\omega)/\rho_0(\theta\omega), r_1(\omega)/r_1(\theta\omega) \le e^\epsilon$.

\begin{lemma}\label{lem:cone-contraction}
Let $(y_n, \eta_n)_{n=-N}^0$ be the minimizer as before. Suppose for a given $-N/6 \le k \le -1$, we have 
\begin{equation}
	\label{eq:Yk-delta-initial-bound}
	\|Y_k - X_k\| < \rho_0(\theta^k \omega), \quad  \delta^{\frac14} < \rho_0(\theta^k \omega).
\end{equation}
Then one of the following alternatives must hold for $Y_k$: 
\begin{enumerate}
	\item $Y_k - X_k^\omega \in C_k^s$ and 
	\begin{equation}
		\label{eq:exp-delta}
		\|Y_k - X_k^\omega\|' \le  \max \{\delta^q, e^{-(\lambda' - 2\delta)N/6}\}, 
		\quad q = \lambda'/(8\varepsilon). 
	\end{equation}
	\item  $Y_k - X_k^\omega \in C_k^u$ and 
	\begin{equation}
		\label{eq:one-step-contraction}
		\|\Pi^u Y_{k-1} - \Pi^u X_{k-1}\|'  \le e^{-\lambda'} \| \Pi^u Y_k- \Pi^u X_k\|.     
	\end{equation}  
\end{enumerate}
\end{lemma}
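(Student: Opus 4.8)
Since $C_k^s \cup C_k^u = \R^{2d}$, the vector $Y_k - X_k^\omega$ lies in at least one of the two cones, and the plan is to treat the two alternatives separately: the unstable case is a single backward step of hyperbolicity, whereas the stable case is where the real work lies.

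\emph{The unstable case.} Suppose $Y_k - X_k^\omega \in C_k^u$. Here I only have to apply Proposition~\ref{prop:nonuniform-hyp}(4b) across the single step $k \to k-1$, which gives \eqref{eq:one-step-contraction} immediately. What must be checked are its hypotheses $\|Y_k - X_k^\omega\|' < \sigma^\epsilon(\theta^k \omega)$ and $\|Y_{k-1} - X_{k-1}^\omega\|' < \sigma^\epsilon(\theta^{k-1}\omega)$. The first follows from $\|Y_k - X_k^\omega\| < \rho_0(\theta^k\omega)$ together with $\|\cdot\|' \le M^\epsilon(\theta^k\omega)\|\cdot\|$ and the fact that $\rho_0$ is, by construction, a small multiple of $\big(r_1(\theta^k\omega)/C_2^{\epsilon/4}(\theta^k\omega)\big)^2$ with $r_1 \le \sigma^{\epsilon/4} \le \sigma^\epsilon$. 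For the second I invoke Lemma~\ref{lem:localize-phase} at $j = k-1$: since $\|y_k - x_k^\omega\| + \sqrt\delta < 2\rho_0(\theta^k\omega)$ (using $\delta^{1/4} < \rho_0(\theta^k\omega)$) and $C_2^\epsilon,\sigma^\epsilon$ are tempered, the resulting bound on $\|Y_{k-1} - X_{k-1}^\omega\|'$ stays below $\sigma^\epsilon(\theta^{k-1}\omega)$; the configuration-space hypothesis of that lemma at $j = k-1$ is verified by the same one-step estimate.

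\emph{The stable case.} Suppose $Y_k - X_k^\omega \in C_k^s$. I push the orbit backward while it stays in the Lyapunov $\sigma^\epsilon$-neighbourhood of $X_\bullet^\omega$ and above time $-N/3$; let $\ell^* \ge 1$ be the first step at which one of these two conditions fails. By backward invariance and backward expansion of the stable cone (Proposition~\ref{prop:nonuniform-hyp}(4a)), for $0 \le \ell \le \ell^*-1$ we have $Y_{k-\ell} - X_{k-\ell}^\omega \in C_{k-\ell}^s$ and $\|\Pi^s Y_{k-\ell} - \Pi^s X_{k-\ell}\|' \ge e^{\lambda'\ell}\|\Pi^s Y_k - \Pi^s X_k\|'$; combining with $\|Y_k - X_k^\omega\|' \le \sqrt 2\,\|\Pi^s Y_k - \Pi^s X_k\|'$ inside $C_k^s$ gives
\[
\|Y_k - X_k^\omega\|' \le \sqrt 2\, e^{-\lambda'(\ell^*-1)}\,\|Y_{k-\ell^*+1} - X_{k-\ell^*+1}^\omega\|' < \sqrt 2\,\sigma^\epsilon(\theta^{k}\omega)\,e^{-(\lambda'-\epsilon)(\ell^*-1)},
\]
the last step using $\|Y_{k-\ell^*+1} - X_{k-\ell^*+1}^\omega\|' < \sigma^\epsilon(\theta^{k-\ell^*+1}\omega) \le e^{\epsilon(\ell^*-1)}\sigma^\epsilon(\theta^k\omega)$. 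If $\ell^*$ is reached because $k - \ell^* < -N/3$, then $\ell^* \ge N/6$ (as $k \ge -N/6$) and this already yields $\|Y_k - X_k^\omega\|' \le e^{-(\lambda'-2\epsilon)N/6}$ after absorbing $\sqrt 2\,\sigma^\epsilon(\theta^k\omega)$ and adjusting the $\epsilon$-budget. If instead $\ell^*$ is reached by exiting the $\sigma^\epsilon$-neighbourhood, then $\|Y_{k-\ell^*} - X_{k-\ell^*}^\omega\|' \ge \sigma^\epsilon(\theta^{k-\ell^*}\omega)$, while Lemma~\ref{lem:localize-phase} at $j = k-\ell^*$ (applicable after a routine check that the orbit is still in the configuration-space domain of that lemma, using that it was inside the neighbourhood one step earlier and the twist maps have tempered derivatives) gives the opposite inequality $\|Y_{k-\ell^*} - X_{k-\ell^*}^\omega\|' \le C_2^\epsilon(\theta^k\omega)e^{\epsilon\ell^*}(\|y_k - x_k^\omega\| + \sqrt\delta)$. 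These force $e^{2\epsilon\ell^*} \ge \sigma^\epsilon(\theta^k\omega)\big/\big(C_2^\epsilon(\theta^k\omega)(\|y_k - x_k^\omega\| + \sqrt\delta)\big)$.

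Feeding this lower bound on $\ell^*$ back into the displayed inequality, and using $\|y_k - x_k^\omega\| \le \|Y_k - X_k^\omega\| \le \|Y_k - X_k^\omega\|'$, produces a closed estimate
\[
\|Y_k - X_k^\omega\|' \le C\,\sigma^\epsilon(\theta^k\omega)^{1-p}\,C_2^\epsilon(\theta^k\omega)^{p}\,\big(\|Y_k - X_k^\omega\|' + \sqrt\delta\big)^{p}, \qquad p = \frac{\lambda'-\epsilon}{2\epsilon} > 1,
\]
with $C$ a universal constant. Since $\rho_0(\theta^k\omega)$ was calibrated (this is the role of the power $2$ and the explicit multiple in its definition, together with $M^\epsilon\sigma^\epsilon \lesssim 1$) so that $\|Y_k - X_k^\omega\|' \le M^\epsilon(\theta^k\omega)\rho_0(\theta^k\omega)$ lies strictly below the nontrivial root of this superlinear inequality, only the small root can occur, i.e. $\|Y_k - X_k^\omega\|' \le C'\,\delta^{p/2}$ for a tempered $C'$. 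Finally $p/2 = (\lambda'-\epsilon)/(4\epsilon) \ge \lambda'/(8\epsilon) = q$ (using $\epsilon < \lambda'/6$), and $\delta^{1/4} < \rho_0(\theta^k\omega)$ lets the constant be absorbed into one extra power of $\delta$, giving $\|Y_k - X_k^\omega\|' \le \delta^q$; together with the previous branch this is \eqref{eq:exp-delta}.

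\emph{Main obstacle.} The crux is the second branch of the stable case: ensuring the twin facts that (i) Lemma~\ref{lem:localize-phase} is genuinely applicable at the exit time $k-\ell^*$, and (ii) the universal constant $C$ and the random factors $\sigma^\epsilon,C_2^\epsilon,M^\epsilon$ in the superlinear inequality are dominated by the calibration built into $\rho_0$, uniformly over $k \in [-N/6,-1]$ and over $\omega$. Equivalently, all the temperedness allowances (Lemma~\ref{lem:tempered-arK} and the corresponding statements for $\sigma^\epsilon$, $M^\epsilon$, $C_2^\epsilon$) have to be spent so that the accumulated $e^{\epsilon\ell}$-losses never overcome the $e^{-\lambda'\ell}$-gain and so that the exponent $p/2$ lands above $q$.
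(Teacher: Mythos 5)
Your proof is essentially correct in its broad mechanism (stable-cone backward expansion bounded above by Lemma~\ref{lem:localize-phase}), but it organizes the closing argument quite differently from the paper, and there is one point where your bookkeeping rests on an unverified calibration claim.

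The paper does not run a first-exit-time argument. It instead defines an explicit window length $|i_0|$ in advance by the formula $e^{2\epsilon|i_0|}=\min\bigl\{\sqrt{\rho_0(\theta^k\omega)}/d_k,\ \sqrt{\rho_0(\theta^k\omega)}/\sqrt\delta,\ e^{\epsilon N/3}\bigr\}$, which is precisely chosen so that the bound from Lemma~\ref{lem:localize-phase} stays below $r_1(\theta^j\omega)$ for every $j\in[i_0+k,k)$, hence hyperbolicity applies over the whole window without any need to locate or control the actual exit time. Plugging the stable-cone expansion $d_{i_0+k}\ge e^{\lambda'|i_0|}d_k$ into the same localization bound yields a linear inequality $d_k\le e^{-(\lambda'-3\epsilon)|i_0|}(d_k+\sqrt\delta)$, which closes immediately once $e^{-(\lambda'-3\epsilon)|i_0|}<\tfrac12$, and then $|i_0|$ is eliminated using \eqref{eq:i0}. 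In your version the unknown $\ell^*$ re-enters the inequality through the exit condition, producing a superlinear inequality $d\le A(d+\sqrt\delta)^p$ with $p=(\lambda'-\epsilon)/(2\epsilon)$. That is a valid alternative in principle, and it even gives a slightly better exponent ($p/2\ge q$), but it has two costs the paper's route avoids. First, ruling out the large root of the superlinear inequality requires $M^\epsilon(\theta^k\omega)\rho_0(\theta^k\omega)$ to lie below $(A2^p)^{-1/(p-1)}\sim\sigma^\epsilon(\theta^k\omega)(C_2^\epsilon(\theta^k\omega))^{-p/(p-1)}$; you assert this follows from the definition $\rho_0=(r_1/(3C_2^{\epsilon/4}))^2$ together with ``$M^\epsilon\sigma^\epsilon\lesssim1$'', but the paper never establishes the latter and the needed inequality does not obviously fall out of the stated definition (one would have to further shrink $\rho_0$, which is permissible since $\rho_0$ is a definition, but then the claim that ``$\rho_0$ was calibrated'' is doing real unacknowledged work). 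Second, at the exit time $k-\ell^*$ you need the configuration-space hypothesis $\|y_{k-\ell^*}-x_{k-\ell^*}^\omega\|<r(\theta^{k-\ell^*}\omega)$ of Lemma~\ref{lem:localize-phase}; the orbit has, by definition of $\ell^*$, just left the $\sigma^\epsilon$-neighbourhood, so this requires a one-step estimate via the (tempered) derivative of the twist map and is not automatic. The paper's a-priori window avoids both issues because it never evaluates the localization bound outside the window where it is already known to hold. In short: a genuinely different closing route, correct modulo the $\rho_0$-calibration and exit-step verification you point to but do not carry out.
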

\begin{proof}
	Let us denote $d_k = \| Y_k - X_k^\omega\|'$, $\bar{\delta} = \sqrt{\delta}$ and $C_3^\epsilon = C_2^{\epsilon/4}$. Define $-N/6 \le i_0 < 0$ by the relation 
	\begin{equation}
		\label{eq:i0}
		\begin{aligned}
			e^{2\epsilon|i_0|} & = \min \left\{ \frac{r_1(\theta^k \omega)}{ 3d_k C_3^{\epsilon}(\theta^k \omega)},\quad \frac{r_1(\theta^k \omega)}{3\delta_1 C_3^\epsilon(\theta^k \omega)} ,\quad  e^{\epsilon N/3} \right\} \\
			& = \min \left\{ \frac{\sqrt{\rho_0}(\theta^k \omega)}{ d_k}, \quad \frac{\sqrt{\rho_0}(\theta^k \omega)}{ \bar{\delta}}, \quad e^{\epsilon N/3} \right\}                                        
		\end{aligned}	  
	\end{equation}
	In particular, we have 
	\begin{equation}
		\label{eq:ei0-plus}
		C_3^\epsilon(\theta^k \omega) e^{2\epsilon|i_0|} \le \min \left\{  \frac{r_1(\theta^k\omega)}{3d_k}, \frac{r_1(\theta^k\omega)}{3\bar{\delta}}  \right\}
	\end{equation}
	and (using $C_3^\epsilon = \frac{r_1}{3 \sqrt{\rho}_0}$ and \eqref{eq:Yk-delta-initial-bound})
	\begin{equation}
		\label{eq:ei0-minus}
		C_3^\epsilon(\theta^k \omega) e^{-2\epsilon|i_0|} \le \max\left\{ 
		\frac{d_k r_1(\theta^k \omega)}{ 3 \rho_0(\theta^k \omega)}, \quad 
		\frac{\bar{\delta} r_1(\theta^k \omega)}{ 3 \rho_0(\theta^k \omega)} 
		\right\} < \frac{r_1(\theta^k \omega)}{3} < 1. 
	\end{equation}

	Since  $-N/3 \le i_0 + k <0$, Lemma~\ref{lem:localize-phase} applies.  Using \eqref{eq:ei0-plus}, we have for each $i_0 + k \le j < k$, 
	\begin{equation}
		\label{eq:dk-dj}
		\begin{aligned}
			& d_j \le C_3^\epsilon(\theta^k \omega) e^{\epsilon|j-k|} ( d_k + \bar{\delta})                                                                                                                                                       
			= e^{-\epsilon|i_0|} C_3^\epsilon(\theta^k \omega) e^{2\epsilon|i_0|} (d_k + \bar{\delta})  \\
			& \le e^{-\epsilon|i_0|} (d_k + \bar{\delta}) \min \left\{  \frac{r_1(\theta^k\omega)}{3d_k}, \frac{r_1(\theta^k\omega)}{3\bar{\delta}}  \right\}  \le e^{-\epsilon|i_0|} \cdot \frac23 r_1(\theta^k \omega)  < r_1(\theta^j \omega). 
		\end{aligned}  
	\end{equation}
	As a result, Proposition~\ref{prop:nonuniform-hyp} (4) applies for $i_0 + k \le j < k$. 

	If $Y_k - X_k^\omega \in C_k^u$ (second alternative), \eqref{eq:one-step-contraction} holds by  Proposition~\ref{prop:nonuniform-hyp} (4)(b). If $Y_k - X_k^\omega \in C^s_k$ (first alternative), then $Y_j - X_j^\omega \in C^s_j$ for all $i_0 + k < j < k$, due to backward invariance of stable cones. As a result, we get from Proposition~\ref{prop:nonuniform-hyp} (4)(a) that 
	\[
		d_j \ge e^{\lambda' |j-k|} d_k.  
	\]
	Pick $j = i_0 + k$, combine with the first line of \eqref{eq:dk-dj}, and using \eqref{eq:ei0-minus}, we get 
	\begin{align*}
		& d_k \le  C_3^\epsilon(\theta^k \omega) e^{-(\lambda'-\epsilon)|i_0|}(d_k + \bar{\delta}) 
		= C_3^\epsilon(\theta^k \omega) e^{-2\epsilon|i_0|} e^{-(\lambda'-3\epsilon)|j-k|}(d_k + \bar{\delta}) \\
		& \le  e^{-(\lambda'-3\epsilon)|i_0|}(d_k + \bar{\delta}).                                 
	\end{align*}
	We can choose $\epsilon$ small enough (and as a result $|i_0|$ large enough) such that $e^{-(\lambda'-3\epsilon)|i_0|}< \frac12$, then  
	\[
		\bar{\delta} e^{- 3(\lambda' - 3\epsilon)|i_0|} \ge d_k ( 1-  e^{- 3(\lambda' - 3\epsilon)|i_0|}) \ge \frac12 d_k. 
	\]
	Note that in this case, $d_k \le \bar\delta$.  As a result, using \eqref{eq:i0}, and $\bar{\delta}^{\frac12} < \rho(\theta^k\omega)$ from \eqref{eq:Yk-delta-initial-bound}, we get 
	\[
		e^{-2\epsilon|i_0|} = \max \left\{ \frac{\bar{\delta}}{\sqrt{\rho_0(\theta^k\omega)}} ,\quad  e^{-\epsilon N/6} \right\} =   \max \{ \bar{\delta}^{\frac12}, \quad  e^{-\epsilon N/3}\} = \max\{\delta^{\frac14}, \quad  e^{-\epsilon N/3}\}.
	\]
	We combine with the previous formula to get 
	\[
		d_k \le 2 \bar{\delta} \left( e^{-2\epsilon|i_0|} \right)^{(\lambda' - 3\epsilon)/(2\epsilon)} \le \left( e^{-2\epsilon|i_0|} \right)^{(\lambda' - 3\epsilon)/(2\epsilon)} \le    \max\left\{   \delta^q, \quad e^{-(\lambda' -3\epsilon)N/6}  \right\} 
	\]
	with $q = (\lambda' - 3\epsilon)/(8\epsilon)$. 
\end{proof}

We are now ready to prove Proposition~\ref{prop:dt-localize}. 
\begin{proof}[Proof of Proposition~\ref{prop:dt-localize}]
	Define 
	\[
		\rho(\omega) = \rho_0(\omega)/(4C_3^\epsilon(\omega)) < \rho_0(\omega), 
	\] 
	then $e^{-2\epsilon}\le \rho(\theta \omega)/\rho(\omega) \le e^{2\epsilon}$. Recall the assumption 
	\[
		\delta^{\frac18} < \rho(\omega). 
	\]
	Denote $k_0 = \max\left\{  \frac{1}{8\epsilon} \log \delta, - \frac{N}{6}  \right\}$, 
	we have 
	\[
		e^{-\epsilon|k_0|} = e^{\epsilon k_0} \ge e^{\frac18 \log \delta} = \delta^{\frac18}
	\]
	and
	\[
		\rho_0(\theta^k \omega) \ge e^{-\epsilon |k_0|} \rho_0(\omega) \ge \delta^{\frac14}, \quad k_0 \ge k \ge 0. 
	\]
	If $\|y_0 - x_0^\omega\| <\rho(\omega)$, by Lemma~\ref{lem:localize-phase}, we have 
	\[
		\|Y_{-1} - X_{-1}^\omega\|' \le 
		C_3^\epsilon(\omega) e^\epsilon \left(  \|y_0 - x_0^\omega\| + \sqrt{\delta} \right)
		\le 
		C_3^\epsilon (\omega) e^{\epsilon}  2 \rho(\omega) = e^{\epsilon} \rho_0(\omega)/2. 
	\]
	Assume $e^{\epsilon} < 2$, then Lemma~\ref{lem:cone-contraction} applies for $k = -1$. If alternative \eqref{eq:exp-delta} hold, we obtain $\|Y_k - X_k^\omega\|' \le  \max \{\delta^q, e^{-(\lambda' - 3\epsilon)N/6}\}$ and the proposition follows.  Otherwise, the alternative \eqref{eq:one-step-contraction} applies, and 
	\[
		\begin{aligned}
			& \|Y_{-2} - X_{-2}^\omega\|' \le \sqrt{2} \|\Pi^s(Y_{-2} - X_{-2}^\omega)\|^s \le \sqrt{2} e^{-\lambda'} \|Y_{-1} - X_{-1}\|' \\
			& <  e^{-\lambda'} e^{\epsilon}/\sqrt{2} \rho_0(\epsilon) <  e^{-\lambda'} \rho_0(\omega) \le  \rho_0(\theta^{-1}\omega) .     
		\end{aligned}
	\]
	if $e^\epsilon < \sqrt{2}$. 
	We can apply Lemma~\ref{lem:cone-contraction} again. Suppose  \eqref{eq:exp-delta} does not hold for $k+1 \le j \le -1$. Then 
	\[
		\|Y_k - X_k^\omega\|' \le \sqrt{2} e^{-|k+1|\lambda'} \|Y_{-1}- X_{-1}\|' < e^{-|k+1|\lambda'}\rho_0(\omega) \le \rho_0(\theta^k \omega). 
	\]
	Therefore this argument can be applied inductively until we reach $k = k_0$, then 
	\[
		\begin{aligned}
			& \|y_{k_0} - x_{k_0}^\omega\| \le \|Y_{k_0}- X_{k_0}^\omega\|' \le \sqrt{2} e^{-\lambda'|k_0|} \le \sqrt{2}\rho_0(\omega) \max\left\{ \delta^{\frac{\lambda'}{8\epsilon}}, e^{-\lambda' N/6}    \right\} \\
			& < \max \{\delta^q, e^{-(\lambda' - 3\epsilon)N/6}\}.                                                                                                                                                    
		\end{aligned}
	\]
\end{proof}

\bibliographystyle{plain}
\bibliography{HJ}
\end{document}